\newtheorem{thm}{Theorem}
\newtheorem{lem}{Lemma}
\newtheorem{prop}{Proposition}
\newtheorem{rem}{Remark}
\newcommand{\eps}{\varepsilon}
\def\PP{\mathbb{P}}
\def\RR{\mathbb{R}}
\def\EE{\mathbb{E}}
\def\NN{\mathbb{N}}
\def\ZZ{\mathbb{Z}}
\def\QQ{\mathbb{Q}}
\def\wT{\widetilde{T}}
\def\wB{\widetilde{B}}
\def\wx{\widetilde{x}}
\def\wy{\widetilde{y}}
\def\w0{\widetilde{0}}
\def\wmu{\widetilde{\mu}}
\def\L{\mathcal{L}}
\def\B{\mathcal{B}}
\def\C{\mathcal{C}}
\def\ind{{\mathbbm{1}}_}
\title{\huge Weak shape theorem in first passage percolation\\ with infinite passage times}
\author{}
\date{}
\begin{document}
\maketitle

\thispagestyle{empty}

\begin{center}
\vskip-1cm {\Large Rapha\"el Cerf}\\
{\it DMA, Ecole Normale Supérieure\\
45 rue d'Ulm, 75230 Paris Cedex 05, France}\\
{\it E-mail:} rcerf@math.u-psud.fr\\
\vskip0.5cm and\\
\vskip0.5cm {\Large Marie Th\'eret}\\
{\it LPMA, Universit\'e Paris Diderot\\ 
Bâtiment Sophie Germain, 5 rue Thomas Mann, 75205 Paris Cedex 13, France}\\
{\it E-mail:} marie.theret@univ-paris-diderot.fr
\end{center}

\noindent
{\bf Abstract:}
We consider the model of i.i.d. first passage percolation on $\ZZ^d$ : we associate with each edge $e$ of the graph a passage time $t(e)$ taking values in $[0,+\infty]$, such that $\PP[t(e)<+\infty] >p_c(d)$. Equivalently, we consider a standard (finite) i.i.d. first passage percolation model on a super-critical Bernoulli percolation performed independently. We prove a weak shape theorem without any moment assumption. We also prove that the corresponding time constant is positive if and only if $\PP[t(e)=0]<p_c(d)$.\\

\noindent
{\it AMS 2010 subject classifications:} primary 60K35, secondary 82B20.

\noindent
{\it Keywords :} First passage percolation, time constant, shape theorem.\\



\section{Introduction}

\subsection{First definitions and main results}

We consider the standard model of first passage percolation on $\ZZ^d$ for $d\geq 2$. Let $\EE^d$ be the set of the edges $e=\langle x,y \rangle$ of endpoints $x = (x_1,\dots ,x_d),y=(y_1,\dots ,y_d)  \in \ZZ^d$ such that $\|x-y\|_1 := \sum_{i=1}^d |x_i - y_i| = 1$. We consider a family of i.i.d. random variables $(t(e), e \in \EE^d)$ associated to the edges of the graph, taking values in $[0,+\infty]$ (we emphasize that $+ \infty$ is included here). We denote by $F$ the common distribution of these variables. We interpret $t(e)$ as the time needed to cross the edge $e$. If $x,y$ are vertices in $\ZZ^d$, a path $r$ from $x$ to $y$ is a sequence $r=(v_0,e_1,v_1,\dots, e_n,v_n)$ of vertices $(v_i, i=0,\dots ,n)$ and edges $(e_i, i=1,\dots ,n)$ for some $n\in \NN$ such that $v_0 = x$, $v_n=y$ and for all $i\in \{1,\dots , n\}$, $e_i = \langle v_{i-1}, v_i \rangle$. For any path $r$, we define $T(r) = \sum_{e\in r} t(e)$. We obtain a random pseudo-metric $T$ on $\ZZ^d$ in the following way :
$$ \forall x,y \in \ZZ^d \,, \quad T(x,y) \,=\, \inf \{ T(r) \,|\, r \textrm{ is a path from }x \textrm{ to } y\} \,. $$
The variable $T(x,y)$ is the minimum time needed to go from $x$ to $y$. Because the passage time $t(e)$ of an edge $e$ can be infinite, so does the time $T(x,y)$ for $x,y \in \ZZ^d$. From now on, we suppose that the edges with a finite passage time percolate, i.e., we suppose that $F([0,+\infty [) > p_c(d)$, where $p_c(d)$ is the critical parameter for independent Bernoulli bound percolation on $(\ZZ^d, \EE^d)$. An equivalent way to define our model is to perform first an independent Bernoulli percolation on the edges of $\EE^d$ of parameter  $p>p_c(d)$, then associate to each removed edge an infinite passage time, and associate independently to each remaining edge $e$ a finite passage time $t(e)$ according to a fixed law on $[0, + \infty [$. A central object in our study is the set of points reached from the origin $0$ of the graph within a time $t \in \RR^+$ :
$$ B_t^v \,=\, \{ z\in \ZZ^d \,|\, T(0,z)\leq t \} \,.$$
The exponent $v$ indicates that $B_t^v$ is a set of vertices. It may be useful to consider a fattened set $B_t$ by adding a small unit cube around each point of $B_t^v$, so we also define the following random set :
$$ \forall t\in \RR^+\,, \quad B_t \,=\, \{ z+u \,|\, z\in \ZZ^d \textrm{ s.t. } T(0,z) \leq t \,,\,\, u \in [-1/2,1/2]^d\} \,.$$
We now define a new variable $\wT (x,y)$ which is more regular than $T(x,y)$. Since $F([0,+\infty [)> p_c(d)$, there exists $M \in \RR$ such that $F([0,M] )> p_c(d)$. Fix such a $M$. Let $\C_M$ be the infinite cluster for the Bernoulli percolation $(\ind{ \{ t(e) \leq M \} }, e\in \EE^d)$, which exists and is unique a.s.. To any $x\in \ZZ^d$, we associate a random point $\wx\in \ZZ^d$ such that $\wx \in \C_M$ and $\| x - \wx \|_1$ is minimal ; if there are more than one point in $\C_M$ at minimal distance to $x$ we choose $\wx$ among them according to a deterministic rule. We define the regularized times $\wT$ by
\begin{equation}
\label{defwT}
 \forall x,y \in \ZZ^d \,, \quad \wT (x,y) \,=\, T (\wx, \wy)  \,.
 \end{equation}
We emphasize the fact that $\wx, \wy$ and $\wT(x,y)$ depends on the real $M$ chosen previously. As previously, we define the set $\wB_t$ by
$$ \forall t\in \RR^+\,, \quad \wB_t \,=\, \{ z+u \,|\, z\in \ZZ^d \textrm{ s.t. } \wT(0,z) \leq t \,,\,\, u \in [-1/2,1/2]^d\} \,. $$
We can now state our main results. We start with the study of the times $\wT$ and the sets $\wB_t$.
\begin{thm}[Definition of the time constant]
\label{thmwT}
Suppose that $F([0,+\infty [) > p_c(d)$. Then there exists a deterministic function $\wmu : \RR^d \rightarrow [0,+\infty[$ such that
$$  \forall x \in \ZZ^d \, , \quad \lim_{n\rightarrow \infty} \frac{\wT (0,nx)}{n} \,=\, \wmu (x) \quad \textrm{a.s. and in } L^1 \,. $$
Moreover, $\wmu$ is homogeneous, i.e. $\wmu( \lambda x) = \lambda \wmu(x)$ for all $x\in \RR^d$ and $\lambda \in \RR^+$, $\wmu$ is continuous, and either $\wmu$ is identically equal to $0$ or $\wmu(x)>0$ for all $x\neq 0$.
\end{thm}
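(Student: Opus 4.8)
The plan is to deduce the existence and homogeneity of $\wmu$ from Kingman's subadditive ergodic theorem, applied, for each fixed $x\in\ZZ^d$, to the array $(\wT(mx,nx))_{0\le m\le n}$. Subadditivity is immediate from the triangle inequality for the pseudo-metric $T$: $\wT(0,(n+m)x) = T(\w0,\widetilde{(n+m)x}) \le T(\w0,\widetilde{nx}) + T(\widetilde{nx},\widetilde{(n+m)x}) = \wT(0,nx) + \wT(nx,(n+m)x)$. Since $\C_M$, the nearest-point map $z\mapsto\wz$ (equipped with a translation-covariant tie-breaking rule) and $T$ are all built measurably and covariantly from the i.i.d.\ field $(t(e))$, this array is stationary and, for $x\ne 0$, ergodic under the shift $\theta^{x}$ (which is mixing on the product space). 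The one hypothesis that requires real work — and which I expect to be the main obstacle — is the integrability $\EE[\wT(0,x)]<\infty$; this is exactly where allowing infinite passage times forces us to work with the regularized times rather than with $T(0,x)$ itself.

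To obtain integrability I would bound $\wT(0,x)=T(\w0,\wx)\le M\,D_{\C_M}(\w0,\wx)$, where $D_{\C_M}$ denotes the chemical (graph) distance inside $\C_M$; this is legitimate since $\w0,\wx\in\C_M$ and every edge of $\C_M$ has passage time at most $M$. Two classical facts about supercritical Bernoulli percolation then close the estimate: the distance from a fixed vertex to $\C_M$ has an exponentially decaying tail, uniformly in the vertex; and the chemical diameter of $\C_M\cap B(0,n)$ is at most $\rho n$ except with probability $e^{-cn}$ (Antal--Pisztora / Pisztora). A union bound over the possible locations of $\w0$ and $\wx$ gives $\PP[D_{\C_M}(\w0,\wx)>t]\le Ce^{-ct}$ once $t\ge C(1+\|x\|_1)$, whence $\EE[\wT(0,x)]\le M\,\EE[D_{\C_M}(\w0,\wx)]\le C(1+\|x\|_1)<\infty$. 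Kingman's theorem then yields a deterministic $\wmu(x)=\lim_n \wT(0,nx)/n=\inf_n \EE[\wT(0,nx)]/n\in[0,\infty[$, with convergence both a.s.\ and in $L^1$.

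For the remaining structural properties: homogeneity over $\NN$ is immediate from the definition ($\wmu(qx)=q\wmu(x)$, by reading the limit along multiples of $q$), which lets me extend $\wmu$ to $\QQ^d$ by setting $\wmu(y/q):=\wmu(y)/q$ for $y\in\ZZ^d$, $q\in\NN$; positive homogeneity on $\QQ^d$ and subadditivity $\wmu(a+b)\le\wmu(a)+\wmu(b)$ (obtained by passing to the limit in the triangle inequality) then follow. From the linear bound $\EE[\wT(0,x)]\le C(1+\|x\|_1)$ and $\wmu(x)=\inf_n\EE[\wT(0,nx)]/n$ one gets $\wmu(x)\le C\|x\|_1$ on $\ZZ^d$, hence by subadditivity $|\wmu(x)-\wmu(y)|\le\max(\wmu(x-y),\wmu(y-x))\le C\|x-y\|_1$ on $\ZZ^d$, and therefore on $\QQ^d$ by homogeneity. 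So $\wmu$ is Lipschitz on the dense set $\QQ^d$ and extends uniquely to a continuous $\wmu:\RR^d\to[0,\infty[$, on which homogeneity and subadditivity survive by continuity.

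Finally, for the dichotomy I would use that the law of $(t(e))$ together with the covariant construction above make $\wmu$ invariant under the hyperoctahedral group of the lattice (coordinate permutations and sign changes); in particular $\wmu(-x)=\wmu(x)$. Suppose $\wmu(x_0)=0$ for some $x_0=(a_1,\dots,a_d)\ne 0$, and, after a coordinate permutation, assume $a_1\ne 0$. Writing $\sigma$ for the sign flip of the first coordinate, the vector $-\sigma x_0$ lies in the orbit of $x_0$, so $\wmu(-\sigma x_0)=0$, and subadditivity gives $\wmu((2a_1,0,\dots,0))=\wmu\big(x_0+(-\sigma x_0)\big)\le 0$, whence $\wmu(e_1)=0$ by homogeneity and the sign symmetry. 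Coordinate permutations give $\wmu(e_j)=0$ for every $j$, and then subadditivity together with homogeneity and $\wmu(-e_j)=\wmu(e_j)$ force $\wmu(x)\le\sum_j|x_j|\,\wmu(e_j)=0$ for all $x\in\ZZ^d$, hence $\wmu\equiv 0$ on $\ZZ^d$ and, by homogeneity and continuity, on all of $\RR^d$. This is precisely the stated alternative: either $\wmu$ vanishes identically, or $\wmu(x)>0$ for every $x\ne 0$.
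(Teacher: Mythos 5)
Your proposal is correct and follows essentially the same route as the paper: integrability of $\wT(0,x)$ via the bound $T(\w0,\wx)\le M\,D_M(\w0,\wx)$ combined with the exponential control on holes in $\C_M$ and the Antal--Pisztora chemical-distance estimate (this is the paper's Proposition \ref{prop1}), then Kingman's subadditive ergodic theorem, extension to $\QQ^d$ by homogeneity and to $\RR^d$ by the Lipschitz bound $|\wmu(x)-\wmu(y)|\le\wmu(e_1)\|x-y\|_1$, and finally the same reflection-symmetry argument $\wmu(2x_1 e_1)\le\wmu(x)+\wmu(x_1,-x_2,\dots,-x_d)$ for the dichotomy. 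No gaps.
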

Of course $\wmu$ depends on the dimension $d$ and on the law $F$ of the passage times associated to the edges. A priori $\wmu$ could also depend on the real $M$ such that $F([0,M]) > p_c(d)$ chosen to define $\wT$, but we will see in Theorem \ref{thmT} that it is not the case. If $e_1$ denotes the vertex of coordinates $(1,0,\dots,0)$, the constant $\wmu (e_1)$ is called the {\em time constant}. 
\begin{rem}
We also obtain in Proposition \ref{propbn} the a.s. convergence of the so called "point-to-line" regularized passage times (see section \ref{secpos}).
\end{rem}
The next result investigates when the time constant is positive.
\begin{thm}[Positivity of the time constant]
\label{thmpositivity}
Suppose that $F([0,+\infty [)> p_c(d)$. Then
$$ \wmu = 0 \iff F(\{0\})\geq p_c(d) \,. $$
\end{thm}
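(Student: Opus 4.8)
Here is the approach I would take; the statement splits into the two implications of the equivalence, and I expect the critical case $F(\{0\})=p_c(d)$ of the ``$\Leftarrow$'' implication to be the only real difficulty.

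\smallskip\noindent
\emph{Reductions.} By Theorem~\ref{thmwT} the limit defining $\wmu(x)$ exists a.s., so to prove $\wmu(x)=0$ (resp. $\wmu(x)>0$) it suffices to show $\wT(0,nx)/n\to 0$ a.s. (resp. $\liminf_n \wT(0,nx)/n>0$ a.s.); moreover, by the dichotomy in Theorem~\ref{thmwT}, for the implication $F(\{0\})\geq p_c(d)\Rightarrow\wmu\equiv 0$ it is enough to treat one direction, and I will take $x=e_1$. Two facts coming out of the proof of Theorem~\ref{thmwT} will be used freely: a.s. $\|\w0\|_1<\infty$, and $\|\widetilde{ne_1}-ne_1\|_1=o(n)$ (the distance from a point to $\C_M$ is a.s. finite, with light tails). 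Below, $T^\tau$ denotes the first-passage pseudo-metric built from passage times $(\tau(e))$, and $D_{\C_M}$ the chemical distance inside $\C_M$.

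\smallskip\noindent
\emph{The case $F(\{0\})<p_c(d)$.} By continuity from above of the measure $F$, $F([0,\delta])\downarrow F(\{0\})$ as $\delta\downarrow 0$, so fix $\delta>0$ with $F([0,\delta])<p_c(d)$, and set $\tau(e)=\delta\,\ind{\{t(e)>\delta\}}$. Then $\tau(e)\leq t(e)$ and $\tau(e)\leq\delta$ for every $e$, and $(\tau(e))_{e\in\EE^d}$ is a \emph{finite} i.i.d. first-passage percolation with $\PP[\tau(e)=0]=F([0,\delta])<p_c(d)$; by Kesten's theorem (the time constant of standard i.i.d.\ first-passage percolation with law $\nu$ is positive iff $\nu(\{0\})<p_c(d)$), the time constant $\mu_\tau(e_1)=\lim_n T^\tau(0,ne_1)/n>0$. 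Since $\tau\leq t$,
$$\wT(0,ne_1)=T(\w0,\widetilde{ne_1})\;\geq\;T^\tau(\w0,\widetilde{ne_1})\;\geq\;T^\tau(0,ne_1)-\delta\|\w0\|_1-\delta\|\widetilde{ne_1}-ne_1\|_1 .$$
Dividing by $n$ and letting $n\to\infty$ gives $\liminf_n\wT(0,ne_1)/n\geq\mu_\tau(e_1)>0$, so $\wmu(e_1)>0$ and $\wmu$ is not identically $0$. This proves ``$\wmu=0\Rightarrow F(\{0\})\geq p_c(d)$''.

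\smallskip\noindent
\emph{The case $F(\{0\})>p_c(d)$.} The edges with $t(e)=0$ percolate; let $\C_0$ be their unique infinite cluster. Each such edge is $\{t\leq M\}$-open, hence by uniqueness $\C_0\subseteq\C_M$. Let $a$ (resp. $b$) be a point of $\C_0$ nearest to $\w0$ (resp. to $\widetilde{ne_1}$). Then $\wT(0,ne_1)=T(\w0,\widetilde{ne_1})\leq T(\w0,a)+T(a,b)+T(b,\widetilde{ne_1})$, and $T(a,b)=0$ since $a,b$ are joined by zero-time edges. As $\w0,a,b,\widetilde{ne_1}\in\C_M$ and all edges of $\C_M$ have passage time $\leq M$, $T(\w0,a)\leq M\,D_{\C_M}(\w0,a)<\infty$ a.s.\ (a random constant, independent of $n$), while the Antal--Pisztora chemical-distance bound together with $\|b-\widetilde{ne_1}\|_1\leq\|b-ne_1\|_1+\|ne_1-\widetilde{ne_1}\|_1=o(n)$ a.s.\ gives $T(b,\widetilde{ne_1})\leq M\,D_{\C_M}(b,\widetilde{ne_1})=o(n)$ a.s. Hence $\wT(0,ne_1)=o(n)$ a.s., so $\wmu(e_1)=0$ and $\wmu\equiv 0$. (Alternatively, one may take $M=0$ here and invoke Theorem~\ref{thmT} for the independence of $\wmu$ from $M$.)

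\smallskip\noindent
\emph{The case $F(\{0\})=p_c(d)$ — the main obstacle.} Now the zero-time edges form \emph{critical} percolation and there is no infinite zero cluster to use. Confining paths to $\C_M$, on which $t(e)\leq M\,\ind{\{t(e)>0\}}$, we get $\wT(0,ne_1)=T(\w0,\widetilde{ne_1})\leq M\,\Delta_n$, where $\Delta_n$ is the first-passage time between $\w0$ and $\widetilde{ne_1}$ for the first-passage percolation \emph{on the random graph $\C_M$} with passage times $\ind{\{t(e)>0\}}$; so it suffices to prove $\Delta_n/n\to 0$ a.s. This is precisely the analogue, with paths constrained to the infinite cluster $\C_M$, of Kesten's theorem that the standard first-passage time constant for the law $p_c(d)\delta_0+(1-p_c(d))\delta_1$ vanishes. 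The plan is to follow Kesten's multiscale construction of low-cost box crossings at criticality — combining crossings of the (large, since we sit at $p_c(d)$) clusters of zero-time edges with a vanishing density of unit-cost bridge edges, and iterating over scales. \textbf{The heart of the difficulty} is to carry this out \emph{inside} $\C_M$: this is where Pisztora's renormalization of supercritical $\{t\leq M\}$-percolation is needed (every large block contains, with probability tending to $1$, a unique crossing sub-cluster of $\C_M$, and such sub-clusters in neighbouring blocks are connected within $\C_M$), together with the Antal--Pisztora bound to splice the pieces at an $o(n)$ extra cost. This yields $\Delta_n=o(n)$, hence $\wmu(e_1)=0$ and $\wmu\equiv 0$; combined with the two previous cases this proves the theorem. (If Theorem~\ref{thmT} identifies $\wmu$ with a standard first-passage time constant, this case instead follows directly from Kesten's classical criterion.)
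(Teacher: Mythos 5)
Your treatment of the two non-critical cases is correct, and in both you take a more direct route than the paper. For $F(\{0\})<p_c(d)$ the paper truncates from above ($t^k=\min(t,k)$) and argues through the weak limit law of Theorem \ref{thmT} with a test function, whereas your minorization $\tau(e)=\delta\ind{\{t(e)>\delta\}}$ together with the triangle inequality gives the a.s.\ lower bound $\liminf_n\wT(0,ne_1)/n\geq\mu_\tau(e_1)>0$ in one line. For $F(\{0\})>p_c(d)$ the paper again argues through Theorem \ref{thmT} and FKG, whereas you bound $\wT(0,ne_1)$ a.s.\ by routing through $\C_0\subset\C_M$; the only missing detail is that the Antal--Pisztora estimate at the random endpoints $b$ and $\widetilde{ne_1}$ requires the same union bound over possible positions as in Proposition \ref{prop1}, which is routine.

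The critical case $F(\{0\})=p_c(d)$, however, is a genuine gap, and it is exactly where the substance of the theorem lies (Sections \ref{secLD} and \ref{secpos2} of the paper). Your reduction to proving $\Delta_n=o(n)$ for $0/1$ passage times confined to $\C_M$ is fine and matches the paper's reduction to the law $p_c(d)\delta_0+\eta\delta_K+(1-p_c(d)-\eta)\delta_\infty$, but the strategy you then sketch --- a direct multiscale construction of cheap box crossings carried by critical zero-clusters --- is not Kesten's argument and is not known to work for $d\geq 3$, where no quantitative control on crossings by critical clusters is available. Kesten's Theorem 6.1, and the paper's Proposition \ref{propF(0)=pc}, proceed by contradiction: assuming $\wmu(e_1)>0$, one establishes the lower large-deviation estimate $\PP[0\in\C_M,\,T(0,H_n)<(\wmu(e_1)-\eps)n]\leq C_5e^{-C_6 n}$ (Proposition \ref{propGD}), which forces $\EE[|C_0(0)|]<\infty$ and hence $F(\{0\})<p_c(d)$ by the characterization of the critical point via finite mean cluster size --- a contradiction. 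Proving Proposition \ref{propGD} with paths confined to $\C_\infty=\C_M$ is the real work: it uses the point-to-line convergence of Proposition \ref{propbn}, Lemma \ref{lemDM}, the strong shape theorem for $\wB_t$ and the already-established subcritical positivity, none of which enters your sketch. Your fallback via Theorem \ref{thmT} also fails: replacing $+\infty$ by a large finite value $L\geq M$ only decreases the passage times, so Kesten's criterion applied to the modified (finite) law yields $0=\wmu_L\leq\wmu$, an inequality in the useless direction.
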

When $F([0,+\infty [) > p_c(d)$ and $F(\{0\})< p_c(d)$, $\wmu$ is a norm on $\RR^d$, and we denote by $\B_{\tilde \mu}$ the unit ball for this norm
$$\B_{\tilde \mu} \,=\, \{ x\in \RR^d \,|\, \wmu(x) \leq 1 \} \,.$$
We now state the so called {\em shape theorem} for $\wB_t$.
\begin{thm}[Strong shape theorem for $\wB_t$]
\label{thmwB}
Suppose that $F([0,+\infty [) > p_c(d)$.
\begin{itemize}
\item[(i)]  We have
$$ \lim_{n\rightarrow \infty} \sup_{x\in \ZZ^d, \|x\|_1 \geq n} \left| \frac{\wT(0,x) - \wmu(x)}{\|x\|_1} \right| \,=\, 0 \quad \textrm{a.s.} $$
\item[(ii)] If moreover $F(\{0\})< p_c(d)$, then
$$  \forall \eps >0 \,, \textrm{ a.s.}\,, \exists t_0 \in \RR^+ \textrm{ s.t. } \forall t\geq t_0 \,, \quad  (1-\eps) \,\B_{\tilde \mu} \,\subset \frac{\wB_t}{t} \,\subset \, (1+\eps ) \,\B_{\tilde \mu} \,.$$
\end{itemize}
\end{thm}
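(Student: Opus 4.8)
The plan is to derive both parts from the radial limit law of Theorem~\ref{thmwT} by the classical scheme for shape theorems, using in an essential way that $\wT$ only involves edges of the infinite cluster $\C_M$: for all $x,y$ one has $\wT(x,y)=T(\wx,\wy)\le M\,D_M(\wx,\wy)$, where $D_M$ denotes the chemical distance inside $\C_M$, and $\wT$ inherits the triangle inequality $\wT(x,z)\le\wT(x,y)+\wT(y,z)$ and the symmetry $\wT(x,y)=\wT(y,x)$ from $T$. I would first record that $\wmu$ is homogeneous and continuous (Theorem~\ref{thmwT}), hence $\wmu(x)\le C_0\|x\|_1$ for some $C_0$; that $\wmu$ is subadditive, by letting $n\to\infty$ with the $L^1$-convergence of Theorem~\ref{thmwT} in $\wT(0,n(x+y))\le\wT(0,nx)+\wT(nx,n(x+y))$ (the last term having the law of $\wT(0,ny)$ by translation invariance); and that $\wmu(-x)=\wmu(x)$, by a reflection symmetry of $\ZZ^d$. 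In particular $|\wmu(a)-\wmu(b)|\le\wmu(a-b)\le C_0\|a-b\|_1$.

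The technical core is a uniform linear upper bound: \emph{there is $C_1<\infty$ such that, almost surely, for all but finitely many $n$, every $u,v\in\ZZ^d$ with $\max(\|u\|_1,\|v\|_1)\le n$ and $\|u-v\|_1\ge\sqrt n$ satisfy $\wT(u,v)\le C_1\|u-v\|_1$}. By translation invariance this reduces to bounding $\PP[\wT(0,w)>C_1\|w\|_1]$; using $\wT(0,w)\le M\,D_M(\w0,\widetilde{w})\le M\rho\,\|\w0-\widetilde{w}\|_1$ on a likely event, one combines the Antal--Pisztora estimate (in $\C_M$ the chemical distance exceeds $\rho$ times the $\ell^1$-distance only with exponentially small probability) with the exponential decay of the distance from a fixed vertex to $\C_M$ to obtain $\PP[\wT(0,w)>C_1\|w\|_1]\le Ce^{-c\|w\|_1}$ for a suitable $C_1$. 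Summing over the polynomially many pairs at scale $n$ and applying Borel--Cantelli gives the statement. I expect this to be the main obstacle; it is also where the absence of any moment hypothesis on $t(e)$ is dealt with, since truncating at $M$ and staying inside $\C_M$ lets supercritical percolation estimates play the role usually played by moment bounds.

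For part~(i), I would fix $\eps>0$, choose $\delta\le\eps/(2C_0)$, pick finitely many $z_1,\dots,z_k\in\ZZ^d\setminus\{0\}$ whose directions form a $\delta$-net of the sphere $\{\|x\|_1=1\}$, and work on the almost sure event where $\wT(0,nz_i)/n\to\wmu(z_i)$ for every $i$ and the uniform bound holds. Given $x$ with $\|x\|_1=n$ large, I would select $z_i$ with direction within $\delta$ of $x/n$ and a positive multiple $w=qz_i$ of $z_i$ with $\|w\|_1=n(1+o(1))$ and $\|w-x\|_1\in[\sqrt n,3\sqrt n]$ (taking the nearest multiple if it already lies at distance $\ge\sqrt n$ from $x$, otherwise shifting along the ray away from $x$). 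Since $w$ is a positive multiple of $z_i$, the radial limit gives $\wT(0,w)=\|w\|_1\,\wmu(z_i)/\|z_i\|_1\,(1+o(1))$, which differs from $\wmu(x)$ by at most $C_0n\delta+o(n)$; and $|\wT(0,x)-\wT(0,w)|\le\wT(x,w)\le C_1\|x-w\|_1\le3C_1\sqrt n=o(n)$ by the uniform bound. Hence $|\wT(0,x)-\wmu(x)|\le C_0n\delta+o(n)\le\eps n$ for $n$ large, uniformly over such $x$; letting $\eps\downarrow0$ and noting that $\sup_{\|x\|_1\ge n}|\wT(0,x)-\wmu(x)|/\|x\|_1=\sup_{m\ge n}\sup_{\|x\|_1=m}|\wT(0,x)-\wmu(x)|/m$ proves (i).

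For part~(ii), I would note that under $F(\{0\})<p_c(d)$ Theorem~\ref{thmpositivity} forces $\wmu\not\equiv0$, so by Theorem~\ref{thmwT} $\wmu(x)>0$ for $x\neq0$; with homogeneity, subadditivity and symmetry this makes $\wmu$ a norm, hence $c_-\|x\|_1\le\wmu(x)\le C_0\|x\|_1$ for some $c_->0$. Both inclusions then follow routinely from (i). Fix $\eps\in(0,1)$, set $\eps_0=\eps c_-/4$, and let $N$ and $W_N=\max_{\|z\|_1<N}\wT(0,z)<\infty$ be as furnished by (i) on the relevant event. If $z+u\in\wB_t$ with $u\in[-1/2,1/2]^d$ and $\|z\|_1\ge N$, then $\wmu(z)\le\wT(0,z)+\eps_0\|z\|_1\le t+(\eps_0/c_-)\wmu(z)$, so $\wmu(z+u)\le(1+\eps)t$ for $t$ large, while for $\|z\|_1<N$ the point $(z+u)/t$ tends to $0$; this gives $\wB_t/t\subset(1+\eps)\B_{\tilde \mu}$ for large $t$. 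Conversely, if $\wmu(p)\le1-\eps$, take $z$ the lattice point nearest to $tp$, so $\|tp-z\|_1\le d/2$ and $\|z\|_1\le t/c_-+d/2$; if $\|z\|_1\ge N$ then $\wT(0,z)\le\wmu(z)+\eps_0\|z\|_1\le t(1-\eps)+C_0d/2+\eps_0(t/c_-+d/2)\le t$ for $t$ large, and if $\|z\|_1<N$ then $\wT(0,z)\le W_N\le t$ for $t$ large; in either case $tp\in z+[-1/2,1/2]^d\subset\wB_t$, giving $(1-\eps)\B_{\tilde \mu}\subset\wB_t/t$ for large $t$, which completes the argument.
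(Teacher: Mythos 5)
Your proposal follows essentially the same route as the paper. Your ``technical core'' is exactly the paper's Proposition \ref{prop1} (exponential tail for $\PP[\wT(0,w)>C\|w\|_1]$, obtained by localizing $\w0$ and $\widetilde{w}$ via Theorem \ref{thmholes} and then applying Theorem \ref{thmAP} to the chemical distance in $\C_M$), followed by the paper's Lemma \ref{lem1} (Borel--Cantelli over polynomially many pairs to get an a.s.\ uniform linear bound on $\wT(u,v)$ for comparatively close $u,v$). For part (i) the paper argues by contradiction, extracting a subsequence of directions converging to some $z$ and approximating by a single rational direction, whereas you use a finite $\delta$-net of directions; the two devices are interchangeable. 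Part (ii) is the same deduction (the paper phrases both inclusions as contradictions, you argue directly), and your appeal to Theorem \ref{thmpositivity} to make $\wmu$ a norm matches the paper's logical organization, which explicitly defers the positivity question.

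One step in part (i) is wrong as written: you cannot arrange $\|w-x\|_1\le 3\sqrt n$ for a multiple $w=qz_i$ of a net direction. If the direction of $x$ is only within $\delta$ of that of $z_i$, the distance from $x$ to the ray $\RR^+z_i$ can be of order $\delta n$, so the best available bound is $\|w-x\|_1\le \delta n+O(\|z_i\|_1+\sqrt n)$ --- and indeed you implicitly use this correct bound when you estimate $|\wmu(w)-\wmu(x)|\le C_0 n\delta+o(n)$. Consequently the comparison $\wT(x,w)\le C_1\|x-w\|_1$ contributes an error of order $C_1\delta n$, not $3C_1\sqrt n=o(n)$. The repair is immediate: require $\delta\le\eps/(2(C_0+C_1))$ rather than only $\delta\le\eps/(2C_0)$, so that both the $\wmu$-comparison and the $\wT$-comparison errors are absorbed into $\eps n$. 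With that adjustment the argument is complete and matches the paper's.
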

Concerning the times $T$ and the sets $B_t$, we obtain results that are analog to Theorems \ref{thmwT} and \ref{thmwB}. However, since the times $T$ are less regular than the times $\wT$, the convergences hold in a weaker form. Some times $T^*$ will be natural intermediates between $\wT$ and $T$. Let $\C_\infty$ be the infinite cluster for the Bernoulli percolation $(\ind{\{ t(e) <\infty \} }, e\in \EE^d)$, which exists and is unique a.s.. For all $x\in \ZZ^d$, let $x^* \in \ZZ^d$ be the random point of $\C_\infty$ such that $\|x-x^*\|_1$ is minimal, with a deterministic rule to break ties. We define $T^*(x,y) = T(x^*,y^*)$ for all $x,y\in \ZZ^d$, and the corresponding set
$$ B_t^* \,=\, \{ z+u \,|\, z \in \ZZ^d \textrm{ s.t. } T^* (0,z) \leq t \,,\,\, u \in [-1/2, 1/2 ]^d  \}$$
for all $t\in \RR^+$. The times $T^*$ are less regular than $\wT$ but more regular than $T$, thus their study is a natural step in the achievement of the study of the times $T$. Let $\theta$ be the density of $\C_\infty$ : $\theta = \PP[0 \in \C_{\infty}]$.
\begin{thm}[Weak convergence to the time constant]
\label{thmT}
Suppose that $F([0,+\infty [) > p_c(d)$. Then
$$  \forall x \in \ZZ^d \,,\quad \lim_{n\rightarrow \infty} \frac{T^*(0,nx)}{n} \,=\, \wmu (x)  \quad \textrm{in probability}\,, $$
and
$$ \forall x \in \ZZ^d \,,\quad \lim_{n\rightarrow \infty} \frac{T(0,nx)}{n} \,=\, \theta^2 \delta_{\tilde \mu(x)} + (1-\theta^2) \delta_{+\infty} \quad \textrm{in law} \,. $$
As a consequence, the function $\wmu$ does not depend on the constant $M$ satisfying $F([0,M]) >p_c(d)$ that was chosen in the definition of $\wT$.
\end{thm}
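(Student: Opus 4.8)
I would prove the three assertions in turn, the first carrying the analytic content and the other two following softly from it. Throughout, fix $x\in\ZZ^d\setminus\{0\}$.

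\emph{Step 1: $T^*(0,nx)/n\to\wmu(x)$ in probability.} The idea is to compare $T^*$ with the regularized times $\wT$, whose asymptotics are given by Theorem~\ref{thmwT}. Writing $T^*(0,nx)=T(0^*,(nx)^*)$ and $\wT(0,nx)=T(\widetilde0,\widetilde{nx})$ and applying the triangle inequality for $T$ twice, one gets
\[
\big|\,T^*(0,nx)-\wT(0,nx)\,\big|\ \le\ T(\widetilde0,0^*)\ +\ T(\widetilde{nx},(nx)^*)\,.
\]
The first term on the right is a fixed, almost surely finite random variable: $\widetilde0\in\C_M\subseteq\C_\infty$ and $0^*\in\C_\infty$, and $\C_\infty$ is a connected graph all of whose edges carry finite passage times, so $T(\widetilde0,0^*)<\infty$ a.s.\ and this term is $o(n)$ a.s. For the second term I would use stationarity of $F$ together with the translation covariance of the maps $y\mapsto\widetilde y$, $y\mapsto y^*$ and of $T$ (choosing the tie-breaking rules translation covariant): this yields that, for every $n$, $T(\widetilde{nx},(nx)^*)$ has the same law as $T(\widetilde0,0^*)$, so the family $\big(T(\widetilde{nx},(nx)^*)\big)_{n\ge1}$ is tight and $n^{-1}T(\widetilde{nx},(nx)^*)\to0$ in probability. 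Combining this with the a.s.\ convergence $\wT(0,nx)/n\to\wmu(x)$ from Theorem~\ref{thmwT} gives the claim; only convergence in probability is obtained, since the correction term is merely tight, not $o(n)$ a.s. This uniform control of $T(\widetilde{nx},(nx)^*)$ — joining the point of $\C_\infty$ nearest to $nx$ to the point of $\C_M$ nearest to $nx$ through finite passage times — is the genuinely delicate point; a more hands-on route, staying inside $\C_M$ and invoking an Antal--Pisztora chemical-distance estimate together with the uniform bound $M$ on passage times inside $\C_M$, also works but is heavier, and the stationarity argument is meant to bypass it.

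\emph{Step 2: $T(0,nx)/n\to\theta^2\delta_{\tilde\mu(x)}+(1-\theta^2)\delta_{+\infty}$ in law.} I would condition on $\{0\in\C_\infty\}\cap\{nx\in\C_\infty\}$. On this event $0^*=0$ and $(nx)^*=nx$, so $T(0,nx)=T^*(0,nx)$; moreover, since $\{nx\in\C_\infty\}$ is a translate of $\{0\in\C_\infty\}$ and the i.i.d.\ Bernoulli percolation $(\ind{\{t(e)<\infty\}},e\in\EE^d)$ is mixing, $\PP[0\in\C_\infty,\,nx\in\C_\infty]\to\theta^2$. On the complementary event, at least one of $0,nx$ lies in a cluster of $(\ind{\{t(e)<\infty\}})$ which is a.s.\ finite, hence of diameter eventually below $\|nx\|_1$, so that $\PP[T(0,nx)<\infty,\ \{0,nx\}\not\subseteq\C_\infty]\to0$ and $T(0,nx)=+\infty$ with probability tending to $1-\theta^2$. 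Testing against an arbitrary bounded continuous $f$ on the compactified half-line $[0,+\infty]$ and using Step~1 (so that $f(T^*(0,nx)/n)\to f(\wmu(x))$ in probability and boundedly) gives
\[
\EE\!\left[f\!\left(\tfrac{T(0,nx)}{n}\right)\right]\ \longrightarrow\ \theta^2 f(\wmu(x))+(1-\theta^2)f(+\infty)\,,
\]
which is the asserted convergence in law. The two probabilistic inputs here — the factorization into $\theta^2$ and the vanishing of the probability of crossing a finite cluster — are routine consequences of the mixing of product measures and of the a.s.\ finiteness and uniqueness of finite clusters in supercritical Bernoulli percolation.

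\emph{Step 3: $\wmu$ does not depend on $M$.} The raw times $T(0,nx)$, and hence the limit law obtained in Step~2, make no reference to the constant $M$ used to define $\wT$. Since $F([0,+\infty[)>p_c(d)$ forces $\theta=\PP[0\in\C_\infty]>0$, that law has a single atom at a finite location, which must therefore be $\wmu(x)$ (finite by Theorem~\ref{thmwT}); so $\wmu(x)$ is determined by $F$ alone, for every $x\in\ZZ^d$, and then $\wmu$ is determined on all of $\RR^d$ by its homogeneity and continuity.
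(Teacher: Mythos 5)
Your proposal is correct and follows essentially the same route as the paper: the comparison $|T^*(0,nx)-\wT(0,nx)|\le T(\w0,0^*)+T(\widetilde{nx},(nx)^*)$ together with stationarity is exactly the paper's Proposition~\ref{propT*wT}, and the convergence in law is obtained by the same splitting over $\{0\in\C_\infty,\,nx\in\C_\infty\}$ and its complement, with $\PP[0\in\C_\infty,\,nx\in\C_\infty]\to\theta^2$ and Theorem~\ref{thmfinite} controlling finite clusters. The only cosmetic deviations are that you invoke mixing where the paper uses FKG plus independence of well-separated local events, and that you extract the $M$-independence of $\wmu$ from the limit law of $T$ rather than directly from the fact that $T^*$ itself does not involve $M$.
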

\begin{rem}
We denote our limit by $\wmu$. The existence of a limit for the rescaled times $T(0,nx)/n$ is already known in many cases, see section \ref{secart} for a presentation of those results. By Theorem \ref{thmT} we know that the limit $\wmu$ we obtain is the same as the limit obtained in other settings with more restrictive assumptions on $F$.This limit is usually denoted by $\mu$, but we decided to keep the notation $\wmu$ to emphasize the fact that $\wmu$ is obtained as the limit of the times $\wT$.
\end{rem}
We denote by $A\triangle B$ is the symmetric difference between two sets $A$ and $B$. We denote the Lebesgue measure on $\RR^d$ by $\L^d$. We state the shape theorem in the framework of weak convergence of measures. We say that a sequence $(\mu_n, n\in \NN)$ of measures on $\RR^d$ converges weakly towards a measure $\mu$ if and only if for any continuous bounded function $f : \RR^d \mapsto \RR$ we have 
$$ \lim_{n\rightarrow \infty} \int_{\RR^d} f \, d\mu_n \,=\, \int_{\RR^d} f \,d\mu \,. $$
We denote this convergence by $\mu_n \underset{n \rightarrow \infty}{ \rightharpoonup}  \mu$.
\begin{thm}[Weak shape theorem for $B_t^*$ and $B_t$]
\label{thmB}
Suppose that $F([0,+\infty [) > p_c(d)$ and $F(\{0\})< p_c(d)$.
\begin{itemize}
\item[(i)] We have
$$ \lim_{t\rightarrow \infty } \L^d \left( \frac{B_t ^*}{t} \triangle \B_{\tilde \mu}  \right)  \,=\, 0 \quad \textrm{a.s.} $$
\item[(ii)]
On the event $\{ 0 \in \C_\infty \}$ we have a.s. the following weak convergence :
$$\frac{1}{t^d} \sum_{x\in B_t ^v } \delta_{x/t}   \, \underset{t \rightarrow \infty}{ \rightharpoonup} \, \theta \,  \ind{ \B_{\tilde \mu }} \, \L^d \,. $$
\end{itemize}
\end{thm}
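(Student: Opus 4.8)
The plan is to deduce both parts from the strong shape theorem for $\wB_t$ (Theorem~\ref{thmwB}) and the multidimensional ergodic theorem for the stationary ergodic field $(\ind{\{x\in\C_\infty\}})_{x\in\ZZ^d}$, the bridge being a comparison between $T^*$ and $\wT$. Every edge of $\C_M$ has a finite passage time, so $\C_M\subseteq\C_\infty$ as vertex sets and both $\wx\in\C_M$ and $x^*\in\C_\infty$ lie in $\C_\infty$; since $\C_\infty$ is connected through finite passage times, $W(x):=T(\wx,x^*)$ is a.s.\ finite for every $x$, and the triangle inequality yields
\[ |\,T^*(0,x)-\wT(0,x)\,|\ \le\ W(0)+W(x),\qquad x\in\ZZ^d .\]
Here $W(0)$ is a realization-dependent finite constant, while $x\mapsto W(x)$ is covariant under lattice translations and a.s.\ finite, so the density of the defect set $\{x\in\ZZ^d:W(x)>K\}$ tends to $0$ as $K\to\infty$. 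Since in addition $F(\{0\})<p_c(d)$, the first passage percolation with the bounded weights $t(e)\wedge1$ has a positive time constant (classical shape theorem); bounding $T^*(0,x)$ below by the corresponding passage time between $0^*$ and $x^*$ and using that the $\ell^1$-distance from $x$ to $\C_\infty$ is sublinear in $\|x\|_1$, we obtain a deterministic $R<\infty$ such that $\{x:T^*(0,x)\le t\}\subseteq Rt\,\B_{\wmu}$ for all large $t$, a.s. Thus all the sets involved are, after rescaling by $t$, confined to a fixed bounded region.

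For part~(i) I would first prove $\L^d(B_t^*\triangle\wB_t)=o(t^d)$ a.s. If $\ind{\{T^*(0,x)\le t\}}$ and $\ind{\{\wT(0,x)\le t\}}$ disagree, the comparison inequality gives either $W(x)>K$ (with $x\in Rt\,\B_{\wmu}$) or $|\wT(0,x)-t|\le W(0)+K$; since $F(\{0\})<p_c(d)$ makes $\wmu$ a norm, Theorem~\ref{thmwB} forces such an $x$ into the annulus $(1+\eps)t\,\B_{\wmu}\setminus(1-\eps)t\,\B_{\wmu}$ once $t$ is large, a set of volume $O(\eps)\,t^d$; and by the ergodic theorem the number of lattice points with $W(x)>K$ inside $Rt\,\B_{\wmu}$ is at most $Ct^d\,\PP[W(0)>K]$ for $t$ large. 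Letting $t\to\infty$, then $K\to\infty$, then $\eps\to0$ gives $\L^d(B_t^*\triangle\wB_t)=o(t^d)$. Since Theorem~\ref{thmwB} also yields $\L^d(\wB_t\triangle t\,\B_{\wmu})=o(t^d)$ (the symmetric difference being a thin annulus), part~(i) follows by the triangle inequality for $\L^d(\cdot\triangle\cdot)$.

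For part~(ii) I would work on $\{0\in\C_\infty\}$, where $0^*=0$ and, as $x\in\C_\infty\iff T(0,x)<\infty\iff x^*=x$, the vertex set of $B_t$ is $B_t^v=\{x:T^*(0,x)\le t\}\cap\C_\infty$. Here the comparison inequality upgrades to a sandwich: with $G_K:=\{x\in\C_\infty:W(x)\le K\}$, which is translation-covariant of density $\rho_K$ with $\rho_K\uparrow\theta$ as $K\to\infty$, Theorem~\ref{thmwB} gives, for all large $t$,
\[ G_K\cap(1-\eps)t\,\B_{\wmu}\ \subseteq\ B_t^v\cap G_K\ \subseteq\ G_K\cap(1+\eps)t\,\B_{\wmu},\]
while $\#(B_t^v\setminus G_K)\le\#((\C_\infty\setminus G_K)\cap Rt\,\B_{\wmu})$ is $O(t^d)$ times a quantity that vanishes as $K\to\infty$. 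By the multidimensional ergodic theorem $t^{-d}\#(\C_\infty\cap tA)\to\theta\,\L^d(A)$ for boxes $A$, hence $t^{-d}\sum_{x\in\C_\infty}\delta_{x/t}\rightharpoonup\theta\,\L^d$ and similarly $t^{-d}\sum_{x\in G_K}\delta_{x/t}\rightharpoonup\rho_K\,\L^d$; as $\partial\B_{\wmu}$ is Lebesgue-null, one may integrate $f\,\ind{(1\pm\eps)\B_{\wmu}}$ against these for any continuous bounded $f\ge0$. Feeding the sandwich into $t^{-d}\sum_{x\in B_t^v}f(x/t)$ and letting $t\to\infty$, then $K\to\infty$, then $\eps\to0$ gives the limit $\theta\int_{\B_{\wmu}}f\,d\L^d$; the general signed case follows by linearity, which is the asserted weak convergence.

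The step I expect to be the main obstacle is the comparison lemma and the way it feeds the ergodic theorem: although $W(x)=T(\wx,x^*)$ is a.s.\ finite, its law carries no a priori integrability (the passage times of the edges of $\C_\infty\setminus\C_M$ are finite but unbounded), so $W$ cannot be bounded uniformly and one must argue throughout with densities of exceptional sets, sending the truncation level $K$ to infinity only after $t$. Making this rigorous means carefully interleaving the three limits in $t$, $K$ and $\eps$ and checking that the exceptional sets (defect points, the thin annulus, and the points lying outside $Rt\,\B_{\wmu}$) are all negligible at scale $t^d$ simultaneously. A secondary technical point is the crude confinement $\{x:T^*(0,x)\le t\}\subseteq Rt\,\B_{\wmu}$, which is precisely where the hypothesis $F(\{0\})<p_c(d)$ enters, through comparison with the truncated model $t(e)\wedge1$.
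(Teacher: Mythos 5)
Your proposal is correct and follows the paper's strategy in all essentials: the pointwise comparison $|T^*(0,x)-\wT(0,x)|\le T(0^*,\w0)+T(x^*,\wx)$ with a.s.\ finite but non-integrable defect variables, the spatial ergodic theorem to make the density of sites where the defect exceeds a level $K$ vanish as $K\to\infty$, a truncated model to confine $B_t^*/t$ to a compact region, and the strong shape theorem for $\wB_t$ to push the remaining disagreements into a thin annulus; the interleaving of the limits in $t$, then $K$, then $\eps$ that you single out as the delicate point is exactly how Proposition \ref{propcard} is argued (the paper sends $t\to\infty$ for a fixed truncation level $J$ and only then $J\to\infty$). There are two genuine, though minor, divergences. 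For the confinement, the paper truncates the finite passage times at a level $k$ while \emph{keeping} the value $+\infty$, so that the truncated model has the same infinite cluster and the same regularized points as the original one; this yields the exact pointwise inequality $T^*(0,x)\ge\wT'(0,x)$ and lets the paper invoke its own Theorems \ref{thmpositivity} and \ref{thmwB} for the truncated law. You instead truncate everything at $1$ and appeal to the classical bounded-weight lower bound; this also works, at the cost of two extra (standard) steps: transferring the lower bound from the origin to the random point $0^*$ by the triangle inequality, and checking that $\|x-x^*\|_1$ is sublinear uniformly in $x$ via Theorem \ref{thmholes} and Borel--Cantelli. For part (ii), the paper tiles $\B_{\tilde\mu}$ with small cubes $D(y,t_0)$ and applies the Ackoglu--Krengel ergodic theorem cube by cube, whereas you package the same ergodic input as weak convergence of the empirical measures of $\C_\infty$ and of $G_K$ and then run a sandwich in $K$ and $\eps$; your formulation is cleaner at the top level, but the convergence of $t^{-d}\sum_{x\in\C_\infty}\delta_{x/t}$ tested against $f\,\ind{(1\pm\eps)\B_{\tilde\mu}}$ is itself justified by precisely the cube decomposition the paper writes out, so the two arguments have the same mathematical content.
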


\begin{rem}
We would like to warn the reader that the proofs of these theorems are intertwined. Indeed, we prove the convergence of $\wB_t$ towards $\B_{\tilde \mu}$ under the condition that $\wmu >0$. Then we prove that $\wmu >0$ when $F(\{0\}) < p_d(c)$, and we use these two results to prove the large deviation estimate (Proposition \ref{propGD}) that is the key argument to prove that $\wmu >0$ implies $F(\{ 0 \})<p_c(d)$, which is the delicate step in the proof of Theorem \ref{thmpositivity}. Finally we use Theorem \ref{thmpositivity} in the proof of the weak shape theorem for $B_t^*$ and $B_t$ since we need some compactness argument.
\end{rem}


\subsection{State of the art in first passage percolation}
\label{secart}

The model of first passage percolation has been studied a lot since Hammersley and Welsh \cite{HammersleyWelsh} introduced it in 1965. The results presented in this article are deeply linked to previous works that we try to describe briefly in this section.

First let us consider the case of a law $F$ on $[0,+\infty[$. Thanks to a subadditive argument, Hammersley and Welsh proved in \cite{HammersleyWelsh} that if $d=2$ and $F$ has finite mean, then $\lim_{n\rightarrow \infty} T(0,ne_1) / n $ exists a.s. and in $L^1$, the limit is a constant denoted by $\mu(e_1) $ and called the time constant. The moment condition was improved some years later by several people independently, and the study was extended to any dimension $d\geq 2$ (see for example Kesten's St Flour notes \cite{Kesten:StFlour}). The convergence to the time constant can be stated as follows : if $\EE[\min (t_1,\dots , t_{2d})] < \infty$ where $(t_i)$ are i.i.d. of law $F$, there exists a constant $\mu(e_1) \in \RR$ such that
$$  \lim_{n\rightarrow \infty} \frac{T(0,n e_1)}{ n} \,=\, \mu(e_1) \quad \textrm{a.s. and in } L^1\,.$$
Moreover, the condition $\EE[\min (t_1,\dots , t_{2d})] < \infty$ is necessary for this convergence to hold a.s. or in $L^1$. This convergence can be generalized by the same arguments, and under the same hypothesis, to rational directions : there exists an homogeneous function $\mu : \QQ^d \rightarrow \RR$ such that for all $x \in \ZZ^d$, we have $\lim_{n\rightarrow \infty} T(0, nx) / n = \mu (x)$ a.s. and in $L^1$. The function $\mu$ can be extended to $\RR^d$ by continuity (see \cite{Kesten:StFlour}). A simple convexity argument proves that either $\mu (x) = 0$ for all $x\in \RR^d$, or $\mu(x) >0$ for all $x\neq 0$. Kesten proved in \cite{Kesten:StFlour}, Theorem 1.15, that $\mu > 0$ if and only if $F(\{ 0 \}) < p_c(d)$, i.e., the percolation $(\ind{\{ t(e) = 0 \}}, e\in \EE^d)$ is sub-critical. If $F(\{ 0 \}) < p_c(d)$, $\mu $ is a norm on $\RR^d$, and the unit ball for this norm 
$$\B_\mu \,=\, \{ x\in \RR^d \,|\, \mu (x) \leq 1  \}$$
is compact. A natural question at this stage is whether the convergence $\lim_{n\rightarrow \infty} T(0, nx) / n = \mu (x)$ is uniform in all directions. The shape theorem, inspired by Richardson's work \cite{Richardson}, answers positively this question under a stronger moment condition. It can be stated as follows (see \cite{Kesten:StFlour}, Theorem 1.7) : if $\EE [\min (t_1^d, \dots , t_{2d}^d)] < \infty$, and if $F(\{ 0 \}) < p_c(d)$, then for all $\eps >0$, a.s., there exists $t_0 \in \RR^+$ such that
\begin{equation}
\label{eqforme1}
 \forall t\geq t_0 \, , \quad (1-\eps ) \B_{\mu} \,\subset \, \frac{B_t}{t} \, \subset \, (1+ \eps ) \B_{\mu} \,.
 \end{equation}
Moreover, the condition $\EE [\min (t_1^d, \dots , t_{2d}^d)] < \infty$ is necessary for this convergence to hold a.s. An equivalent shape theorem can be stated when $F(\{0\}) \geq p_c(d)$, but the "shape" appearing in this case is $\B_{\mu} = \RR^d$ itself.

A first direction in which these results can be extended is by considering a law $F$ on $[0,+\infty[$ which does not satisfy any moment condition, at the price of obtaining weaker convergences. Cox and Durrett in dimension $d=2$, and then Kesten in any dimension $d\geq 2$ performed this work successfully in \cite{CoxDurrett} and \cite{Kesten:StFlour} respectively. More precisely, they proved that there always exists a function $\hat \mu : \RR^d \rightarrow \RR^+$ such that for all $x \in \ZZ^d$, we have $\lim_{n\rightarrow \infty} T(0, nx) / n =\hat \mu (x)$ in probability. If $\EE[\min (t_1,\dots , t_{2d})] < \infty$ then $\hat \mu = \mu$. The function $\hat \mu$ is built as the a.s. limit of a more regular sequence of times $\hat T (0,nx) /n$ that we now describe roughly. They consider a $M\in \RR^+$ large enough so that $ F( [0,M])$ is very close to $1$. Thus the percolation $(\ind{\{ t(e) \leq M \}}, e \in \EE^d)$ is highly super-critical, so if we denote by $\C_M$ its infinite cluster, each point $x\in \ZZ^d$ is a.s. surrounded by a small contour $S(x) \subset \C_M$. They define $\hat T (x,y) = T(S(x), S(y))$ for $x,y \in \ZZ^d$. The times $\hat T (0,x)$ have good moment properties, thus $\hat \mu(x)$ can be defined as the a.s. and $L^1$ limit of $\hat T (0,nx) /n$ for all $x\in \ZZ^d$ by a classical subadditive argument; then $\hat \mu$ can be extended to $\QQ^d$ by homogeneity, and finally to $\RR^d$ by continuity. The convergence of $T(0, nx)/n$ towards $\hat \mu (x)$ in probability is a consequence of the fact that $T$ and $\hat T$ are close enough. Kesten's result on the positivity of the time constant remains valid for $\hat \mu$. Moreover, Cox and Durrett \cite{CoxDurrett} and Kesten \cite{Kesten:StFlour} proved an a.s. shape theorem for 
$$\hat B_t \,=\, \{ z+u \,|\, z\in \ZZ^d \textrm{ s.t. } \hat T(0,z) \leq t \,,\,\, u \in [-1/2,1/2]^d\}$$
with shape limit $\B_{\hat \mu} = \{ x\in \RR^d \,|\, \hat \mu (x) \leq 1 \}$ when $\hat \mu $ is a norm, i.e. $F(\{0\}) < p_c(d)$ (and an equivalent shape result with shape limit equal to $\B_{\hat \mu} = \RR^d$ when $F(\{0\}) \geq p_c(d)$). In dimension $d=2$, Cox and Durrett also deduced a weak shape theorem for $B_t$ (see Theorem 4 in \cite{CoxDurrett}) :
\begin{equation}
\label{eqCoxDurrett1}
 \forall K \in \RR^+ \,, \quad \lim_{t\rightarrow \infty} \L^2 \left( \left(  \frac{B_t}{t} \triangle \B_{\hat \mu} \right) \cap \{ x \in \RR^d \,|\, \| x\|_1 \leq K \} \right)  \,=\, 0 \quad \textrm{a.s.}\,,
 \end{equation}
where $\triangle$ denotes the symmetric difference between two sets, and
\begin{equation}
\label{eqCoxDurrett2}
\forall \eps >0 \,, \textrm{ a.s.} \,, \,\,\exists t_0 \in \RR^+ \textrm{ s.t. } \forall t\geq t_0 \, \quad \frac{B_t}{t} \, \subset \, \B_{\mu} \,.
\end{equation}
In fact, in the case $F(\{0\}) < p_c(d)$, the intersection with $\{ x \in \RR^d \,|\, \| x\|_1 \leq K \}$ is not needed in (\ref{eqCoxDurrett1}), since $\B_{\hat \mu}$ is compact. The inclusion in $(\ref{eqCoxDurrett2})$ follows directly from the a.s. shape theorem for $\hat B_t$ since $\hat T(0,x) \leq T(0,x)$ for all $x\in \ZZ^d$. Kesten did not write the generalization to any dimension $d\geq 2$ of this weak shape theorem for $B_t$ without moment condition on $F$ but all the required ingredients are present in \cite{Kesten:StFlour}.

A second direction in which these results can be extended is by considering random passage times $(t(e), e\in \EE^d)$ that are not i.i.d. but only stationary and ergodic. Boivin defined in \cite{Boivin90} a time constant in this case and proved a corresponding shape theorem under some moment assumptions on $F$. We do not present these results in details since this generalization is not directly linked with the purpose of the present article.

A third possible way to generalize these results is to consider infinite passage time. This case has been studied by Garet and Marchand in \cite{GaretMarchand04}. They presented it as a model of first passage percolation in random environment: they consider first a super-critical Bernoulli percolation on $ \EE^d$, and then they associate to each remaining edge $e$ a finite passage time $t(e)$ such that the family $(t(e), e \in \EE^d)$ is stationary and ergodic. If $x$ and $y$ are two vertices that do not belong to the same cluster of the Bernoulli percolation, there is no path from $x$ to $y$ and $T(x,y) = +\infty$. To define a time constant $\mu'(x)$ in a rational direction $x$, they first consider the probability $\overline{\PP}$ conditioned by the event $\{ 0 \in \C_\infty\}$, where $\C_{\infty}$ is the infinite cluster of the super-critical percolation mentioned above. In the direction of $x$ they only take into account the points $(x_n, n\in \NN)$ that belong to $\C_\infty$, with $\lim_{n\rightarrow \infty} \|x_n\|_1 = \infty$. Then under a moment condition on the law of the passage times, they prove that $\overline{\PP}$-a.s., the times $T(0,x_n)$ properly rescaled converge to a constant $\mu'(x)$. They also prove a shape theorem for $B_t$ when $\mu'$ is a norm (i.e. when $\mu'(e_1)>0$) :
$$ \lim_{t\rightarrow \infty} \mathcal D _H \left( \frac{B_t}{t} , \B_{\mu'} \right)\,=\, 0  \quad \overline{\PP}\textrm{-a.s.}\,, $$
where $\mathcal D_H$ denotes the Hausdorff distance between two sets, and $\B_{\mu'} = \{ x\in \RR^d \,|\, \mu'(x) \leq 1 \}$. Let us remark that the infinite cluster $\C_{\infty}$ has holes, and so does the set $B_t$, thus a shape theorem as stated in (\ref{eqforme1}) cannot hold. The use of the Hausdorff distance allows to fill the small holes in $B_t$. Garet and Marchand's results are all the more general since they did not consider i.i.d. passage times but the ergodic stationary case as initiated by Boivin. However, their moment condition on the finite passage times is quite restrictive (see hypothesis ($H_\alpha$) on page 4 in \cite{GaretMarchand04}). As they explained just after defining this hypothesis, in the i.i.d. case, ($H_{\alpha}$) corresponds to the existence of a moment of order $2 \alpha$. The existence of $\mu'$ is proved if ($H_\alpha$) holds for some $\alpha >1$, thus in the i.i.d. case with a moment of order $2 + \eps$. The shape theorem is proved if ($H_\alpha$) holds for some $\alpha> d^2 + 2d - 1$, thus in the i.i.d. case with a moment of order $2 (d^2 + 2d - 1)+ \eps$. We emphasize that these hypotheses are of course fulfilled if the finite passage times are bounded, which is the case in particular if the finite passage times are equal to $1$. In this case $T(x,y)$, $x,y\in \ZZ^d$ is equal to the length of the shortest path that links $x$ to $y$ in the percolation model if $x$ and $y$ are connected, and it is equal to $+\infty$ if $x$ and $y$ are not connected. The variable $T(x,y)$ is called the {\em chemical distance} between $x$ and $y$ and is usually denoted by $D(x,y)$. This chemical distance was previously studied, and we will present a powerful result of Antal and Pisztora \cite{AntalPisztora} in the next section (see Theorem \ref{thmAP}). To finish with the presentation of Garet and Marchand's works, we should say that the generality of the stationary ergodic setting they chose makes it also difficult to give a characterization of the positivity of the time constant in terms of the law of the passage times. Garet and Marchand give sufficient or necessary conditions for the positivity of $\mu'$ (see Section 4 in \cite{GaretMarchand04}), which in the i.i.d. case correspond to :
$$ [ F(\{0\}) > p_c(d) \, \Rightarrow \mu'(e_1)=0] \quad \textrm{and} \quad [F(\{0\}) <p_c(d) \, \Rightarrow \mu'(e_1)>0 ]\,,$$ 
but they do not study the critical case $F(\{0\}) = p_c(d)$.

A lot more was proved concerning first passage percolation. Many people investigated large deviations, moderate deviations and variance of the times $T(0,x)$ (see for instance the works of Kesten \cite{Kesten:StFlour}, Benjamini, Kalai and Schramm \cite{Benjamini:variance}, Chatterjee and Dey \cite{ChatterjeeDey}, Garet and Marchand \cite{GaretMarchand07,GaretMarchand10} and the recent paper of Ahlberg \cite{Ahlberg13}). We do not go further in this direction, even if in section \ref{secpos2} we prove a large deviation estimate, since we see it as a tool rather than a goal.

The aim of the present paper is to fulfill the gap between the works of Cox, Durrett and Kesten on one hand, and Garet and Marchand on the other hand. More precisely, we prove a weak convergence to a time constant (Theorem \ref{thmT}) and a weak shape theorem (Theorem \ref{thmB}) when the passage times are i.i.d., maybe infinite, and without any moment assumption on the finite passage times. We also obtain necessary and sufficient condition for the positivity of the time constant in this setting (Theorem \ref{thmpositivity}). Our strategy follows the approach of Cox and Durrett in \cite{CoxDurrett}, that was adapted by Kesten \cite{Kesten:StFlour} in dimension $d\geq2$ : we define an auxiliary time $\wT$ (see equation (\ref{defwT})), we prove that it has good moment properties, so we can prove the a.s. convergence of $\wT(0,nx)/n$ towards a time constant $\wmu (x)$ in rational directions, and a strong shape theorem for $\wB_t$. Then we compare $T$ to $\wT$ to deduce weak analogs for the times $T$. However, we cannot use the same regularized times $\hat T$ as Cox, Durrett and Kesten did. Indeed, they use the fact that for $M$ large enough, when the passage times are finite, $F([0,  M]) $ can be chosen arbitrarily close to $1$, so the percolation $(\ind{\{ t(e) \leq M \}}, e \in \EE^d)$ can be chosen as super-critical as needed. What they need precisely is the existence of contours included in $\C_{M}$ (the infinite cluster of edges of passage time smaller than $M$) around each point. In dimension $2$, it is in fact enough to require that $F([0, +\infty[) > p_c(2) = 1/2$, thus $F([0,M]) > 1/2$ for $M$ large enough. Indeed when $d=2$, because $p_c(2) = 1/2$, if open edges are in a super-critical regime then closed edges are in a sub-critical regime and they do not percolate. However, for $d\geq 3$, this is not true anymore: it may happen that open edges and closed edges percolate at the same time (remember that $p_c(d) < 1/2$ for $d\geq 3$), and in this case we do not have the existence of open contours around each point. In our setting, we have $F([0,  M])\leq F([0, +\infty[) $, and $F([0, +\infty[) $ is fixed so it cannot be pushed towards $1$, we only know that it is strictly bigger than $p_c(d)$. This is not enough to ensure the existence of contours included in $\C_M$ or $\C_{\infty}$ (the infinite cluster of edges of finite passage time) around each point for $d\geq 3$. In the case where $F(\{+\infty\})$ is very small, it is likely that the proofs of Cox, Durrett and Kesten could work with minor adaptations. But with general laws such that $F([0, +\infty[)> p_c(d)$, we need to define new regularized times $\wT$. Our definition of $\wT(x,y)$ as $T(\wx, \wy)$ where $\wx$ is the point of $\C_M$ (the infinite cluster of edges of passage time $t(e) \leq M$) is inspired by Garet and Marchand \cite{GaretMarchand10} who associated to the chemical distance $D(x,y)$ between $x,y\in \ZZ^d$, that may be infinite, the finite distance $D^* (x,y) = D(x^*,y^*)$, where $x^*$ is the point of the infinite cluster of the underlying Bernoulli percolation which is the closest to $x$ for the $\| \cdot \|_1$ distance (with a deterministic rule to break ties). The corresponding $x^*$ in our setting is the point of $\C_{\infty}$ which is closest to $x$. This choice for $x^*$ seems more natural than $\wx$, since the definition of $\wx$ depends on a real number $M$ satisfying $F([0,M])>p_c(d)$. However, the times $T^* (0,x)=T(0^*, x^*)$ are not regular enough for our purpose. Thus we follow Cox, Durrett and Kesten by introducing an arbitrary M and working with $\C_M$ to define the times $\wT$. Using the results obtained for $\wT$, we can study the times $T^*$ and then $T$. Another originality of our work is the use of measures to state the weak shape theorem (Theorem \ref{thmB} (ii)). With this formulation, both the limit shape $\B_{\tilde \mu}$ and the density $\theta$ of the infinite cluster $\C_\infty$ appear naturally, so we believe it is particularly well adapted to this question.

We also have to mention the work of Mourrat \cite{Mourrat12} in our state of the art. Mourrat deals with random walk in random potential. Consider a family of i.i.d. variables $(V(x), x\in \ZZ^d)$ associated to the vertices of $\ZZ^d$, such that $V(x)$ takes values in $[0,+\infty]$. Let $S=(S_n, n \in \NN)$ be a simple random walk on $\ZZ^d$, starting at $x\in \ZZ^d$ under the probability $\mathbf P_x$ (of expectation $\mathbf E_x$). Given $y\in \ZZ^d$, let $H_y= \inf \{ n\geq 0 \,|\, S_n = y \}$. Then define
$$\forall x,y\in \ZZ^d\,,\quad  a(x,y) \,=\, - \log \mathbf E_x \left[ \exp \left( - \sum_{n=0}^{H_y - 1} V(S_n)\right) \ind{\{ H_y < \infty \}} \right] \,,$$
with $a(x,x) = 0$. Mourrat proves for all $x\in \ZZ^d$ the convergence of $a(0,nx)/n$ towards a deterministic constant $\alpha (x) $ called the Lyapunov exponent (see Theorem 1.1 in \cite{Mourrat12}). Here $\alpha (e_1) >0$ whatever the law of the potentials $V$, thus $\alpha$ defines a norm on $\RR^d$. If 
$$A_t \,=\, \{ x+u \,|\, x \in \ZZ^d\,,\,\, a(0,x) \leq t \,,\,\, u \in [-1/2,1/2]^d  \}$$
and $\B_{\alpha} = \{ x\in \RR^d \,|\, \alpha (x) \leq 1\}$, then Mourrat proves a shape theorem : $A_t/t$ converges towards $\B_\alpha$ (see Theorem 1.2 in \cite{Mourrat12}). The sense in which these convergences happen in both results depend on the moments of $V(x)$. Mourrat obtains necessary and sufficient conditions on the law of $V$ to obtain strong convergences which are similar to the one obtained in classical first passage percolation : $\EE [\min (V_1, \dots , V_{2d})] <\infty$ for the a.s. convergence towards $\alpha$, and $\EE [\min (V_1, \dots , V_{2d})^d ] <\infty$ for the strong shape theorem, where the $V_i$ are i.i.d. with the same law as the $V(x)$. He also obtains a weak shape theorem for $A_t$ without any moment assumption, including infinite potentials as soon as $\PP[V(x) < \infty] > p'_c(d)$ (here $p'_c(d)$ is the critical parameter for Bernoulli percolation on the vertices of $\ZZ^d$) : on the event $\{0 \in \C_\infty \}$, where $\C_\infty$ is the infinite cluster of the percolation of vertices of finite potential, and for any sequence $\eps_t$ such that $\lim_{t \rightarrow \infty } \eps_t = +\infty$ and $\lim_{t \rightarrow \infty } \eps_t/t =0 $, he proves that
\begin{equation}
\label{eqMourrat}
\lim_{t\rightarrow \infty} \L^d \left( \frac{A_t^{\eps_t}}{t} \triangle \B_{\alpha} \right) \,=\, 0 \quad \textrm{a.s.} 
\end{equation}
where $A^\eps = \{ y \in \RR^d \,|\, \exists x \in A \textrm{ s.t. } \|x-y\|_2 \leq \eps \}$ is the $\eps$-neighborhood of $A$ of size $\eps$ for the euclidean distance. Notice that taking these neighborhoods allows Mourrat to take care of the holes in the infinite cluster $\C_{\infty}$ as Garet and Marchand did by considering the Hausdorff distance. The work of Mourrat is deeply linked with our setting: consider this model for potentials $(\beta V(x), x\in \ZZ^d)$ and let $\beta>0$ goes to $\infty$, the measure on the paths $(S_n, n\in \{0,\dots , H_y\})$ between $x$ and $y$ charges only paths of minimal weights $\sum_{n=0}^{H_y - 1} V(S_n)$, i.e., the geodesics for the first passage percolation of passage times $(V(x), x\in \ZZ^d)$ on the vertices of $\ZZ^d$. This limit can be seen as the limit at null temperature, since $\beta$ usually represents the inverse of the temperature in this kind of model. The first difference between Mourrat's setting and our setting is that he considers random potential on the vertices of $\ZZ^d$ whereas we consider random passage times on the edges of $\ZZ^d$. We believe that this difference in the model should not change the validity of either arguments. The second difference is that Mourrat considers the model of random walk in random potential at positive temperature, whereas we consider its limit at null temperature, i.e. the first passage percolation model. Notice that even if the model of random walk in random potentials is more general than the first passage percolation model, it is not straightforward to obtain the shape theorem for first passage percolation as a corollary of Mourrat's shape theorem. But it is likely that Mourrat's arguments could work similarly in the case of null temperature. However, Mourrat's approach is somehow more intricate than ours, so we believe that our method is interesting in itself. Indeed, Mourrat met the same problem as we did in the use of Cox and Durrett's argument : in his setting, the percolation $(\ind{\{ V(x) \leq M \}}, x\in \ZZ^d)$ cannot be chosen as super-critical as he wants, so he cannot define shells in the infinite cluster $\C_M$ of this percolation that surround points. To circumvent this difficulty, he uses a renormalization scheme : he defines blocks at a mesoscopic scale, and declares a block to be "good" if $M$ behaves "well" inside this block (see section 5 of \cite{Mourrat12}). He chooses the size of the blocks such that the percolation of good blocks is highly super-critical. Then he considers around each point of $\ZZ^d$ a shape of good blocks that surrounds this point, and that belongs to the infinite cluster of good blocks. He finally defines his regularized times $\hat a (x,y)$ as (what would be in our setting) the minimal time needed to go from the part of $\C_M$ which belongs to the shell of good blocks around $x$, to the part of $\C_M$ which belongs to the shell of good blocks around $y$. Notice that this renormalization construction does not avoid the use of existing results, the same as the one we will use and that we present in the next section, some of them being themselves proved by a renormalization scheme (see Theorem \ref{thmAP} for instance). Thus our approach, with a simpler definition of the regularized times $\wT$ and thus of the time constant, seems more natural. Moreover, we improve our comprehension of the model by the study of the times $T^*$ and the sets $B_t^*$, that do not appear in Mourrat's work. We think that our statement of the weak shape theorem for $B_t$ in terms of measures is also somehow more descriptive than Mourrat's weak shape theorem, since he considers a fattened version $A_t^{\eps_t}$ of $A_t$ to fill the holes in the infinite cluster $\C_\infty$, whereas we choose to make them appear in our limit through their density. Finally, Mourrat does not investigate the positivity of the constant in first-passage percolation, since in his model at positive temperature the constants $\alpha (x)$ are always positive - this property can be lost when taking the limit at null temperature, and it is indeed the case if $F(\{  0 \} ) \geq p_c(d)$ as proved in Theorem \ref{thmpositivity}.

Other related works are those of Cox and Durrett \cite{CoxDurrett:epidemics}, Zhang \cite{Zhang:epidemics} and Andjel, Chabot and Saada \cite{AndjelChabotSaada} about the spread of forest fires and epidemics. We do not present these models in details but, roughly speaking, they correspond to the study of first passage percolation with the following properties :
\begin{itemize}
\item[$(i)$] the percolation is oriented, i.e., the passage time from a vertex $x$ to a vertex $y$ is not equal to the passage time from $y$ to $x$;
\item[$(ii)$] the percolation is locally dependent;
\item[$(iii)$] the passage time of an edge is either infinite either equal to $1$ in the spread of forest fires, or smaller than an exponential variable in the spread of epidemics.
\end{itemize}
The models with properties (i) and (ii) are more difficult to study than the case of i.i.d. first passage percolation we consider here, indeed it is necessary to develop adequate percolation estimates in this setting. Property (iii) makes the study easier, since finite edges have good moment properties. In \cite{CoxDurrett:epidemics}, Cox and Durrett study these models in dimension $2$, and they prove the corresponding shape theorems. Since they work in dimension $2$, they can use the methods they developed in \cite{CoxDurrett} with the definition of contours around points. Zhang studies also the two-dimensional case in \cite{Zhang:epidemics} but he adds some finite range interactions, i.e., he adds edges in the graph between vertices that are not nearest neighbors. Andjel, Chabot and Saada study in \cite{AndjelChabotSaada} the epidemic model in dimension $d\geq 3$. They face the same problem as we and Mourrat do : the definition of contours is not adapted anymore. To circumvent this problem, they replace the contours by more complicated random neighborhoods around each vertex. The definition we propose in our setting is a lot more straightforward.  

A natural question raised by this work is the study of the continuity of the time constant. Cox \cite{Cox} and Cox and Kesten \cite{CoxKesten} proved the continuity of the time constant with regard to a variation of the law $F$ of the passage time on $[0,+\infty[$ in dimension $2$, and this result was extended to any dimension $d\geq 2$ by Kesten \cite{Kesten:StFlour}. It may be possible to combine their techniques with the definition of the time constant for possibly infinite passage times we propose in this article. We emphasize the fact that the existence of the time constant without any moment condition and the study of the positivity of the time constant are two of the key arguments used by Cox and Kesten in \cite{CoxKesten} to improve the work of Cox \cite{Cox}.


\subsection{Useful results}
\label{secbackground}

In the previous section, we presented several papers whose results and techniques influenced our work. In this section, we present a few results concerning percolation and chemical distance that we will use. In what follows, we consider an i.i.d. Bernoulli percolation on the edges of $\ZZ^d$ of parameter $p>p_c(d)$.

First we present the result of Antal and Pisztora \cite{AntalPisztora}. Denote by $D(x,y)$ the chemical distance between $x$ and $y\in \ZZ^d$. We recall that if $| \gamma |$ denotes the length of a path $\gamma$ (i.e. the number of edges in $\gamma$), then
$$ \forall x,y \in \ZZ^d \,, \quad D(x,y) \,=\, \inf \{ |\gamma | \,:\, \gamma \textrm{ is a path from }x \textrm{ to }y \} \,. $$
If $x$ and $y$ are not connected, there exists no such path and by definition $D(x,y) = +\infty$. The event that $x$ and $y$ are connected will be denoted by $\{ x \leftrightarrow y \}$. We write Antal and Pisztora's result in an appropriate form for our use.
\begin{thm}[Chemical distance]
\label{thmAP}
Suppose that $p>p_c(d)$. There exist positive constants $A_1$, $A_2$ and $A_3$ such that
$$ \forall y \in \ZZ^d\,,\,\, \forall l\geq A_3 \|y\|_1 \,, \quad \PP[ 0 \leftrightarrow y\,,\, D(0,y) \geq l ] \,\leq \, A_1 e^{-A_2 l} \,. $$
\end{thm}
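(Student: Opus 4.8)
Since the assertion is precisely the theorem of Antal and Pisztora \cite{AntalPisztora}, in the paper one would simply cite it; but here is how I would prove it, via the classical renormalization scheme for supercritical Bernoulli percolation. First I would coarse-grain space. Fix a large integer $K$, tile $\ZZ^d$ by the blocks $B_i=Ki+\{0,\dots,K-1\}^d$, $i\in\ZZ^d$, and call a block $i$ \emph{good} if, inside the box of side $3K$ centred on $B_i$, there is a unique open cluster crossing $B_i$ between every pair of opposite faces, every open cluster of diameter at least $K/10$ inside that box is connected to it, and for every neighbour $j$ of $i$ the crossing clusters of $B_i$ and $B_j$ lie in the same cluster of the union of the two enlarged boxes. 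By the Grimmett--Marstrand theorem and Pisztora's renormalization estimates, $\PP[i\textrm{ is good}]\to 1$ as $K\to\infty$; moreover the field of not-good blocks is finite-range dependent, so by the Liggett--Schonmann--Stacey domination lemma it is stochastically dominated by an i.i.d. Bernoulli field of parameter $q=q(K)$ with $q(K)\to 0$. I would fix $K$ once and for all so that $q:=q(K)$ lies below the threshold required by the counting step.

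The geometric core is a deterministic lemma: there is a constant $C=C(K,d)$ such that whenever $0\leftrightarrow y$ and $D(0,y)\ge C\|y\|_1$, there is a $*$-connected set $\mathcal A$ of not-good blocks with $\card(\mathcal A)\ge D(0,y)/C$, contained in the block-box of radius $C\,D(0,y)$ around the block $i_0$ containing $0$. The idea is that along any maximal run of good blocks met by a geodesic the crossing clusters are pairwise connected with chemical distance at most $CK$ per block, so a geodesic whose length exceeds $C\|y\|_1$ is forced to detour around an obstruction; the obstruction, after the usual topological surgery (a separating surface of not-good blocks, converted into a large $*$-connected set), yields $\mathcal A$. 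I expect this lemma to be the main obstacle: one must tune the definition of ``good'' so that good regions genuinely carry a linearly bounded chemical distance, and one must keep control of where the bad cluster can sit.

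Given the lemma, I would finish by a Peierls-type count. Let $l_0=\lceil l/C\rceil$, let $c_d$ be a constant such that at most $c_d^{\,m}$ $*$-connected subsets of $\ZZ^d$ of cardinality $m$ contain a prescribed block, and note that there are at most $(C'l)^d$ possible locations for such a cluster in the relevant box (since $\|y\|_1\le l/A_3\le l$). Using the stochastic domination,
\[
\PP[\,0\leftrightarrow y,\ D(0,y)\ge l\,]\ \le\ \sum_{m\ge l_0}(C'l)^d\,(c_d\,q)^{m}\ \le\ \frac{(C'l)^d\,(c_d\,q)^{\,l_0}}{1-c_d\,q},
\]
which is finite because $c_d\,q<1$ by the choice of $K$. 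The hypothesis $l\ge A_3\|y\|_1$ plays two roles: for $A_3\ge C$ it makes the geometric lemma applicable (so that the obstructing bad cluster exists and has size at least $l/C$), and, since then $l\ge A_3$ when $y\neq 0$ while the case $y=0$ is trivial, it lets the polynomial prefactor $(C'l)^d$ be absorbed into the geometric decay: writing $(C'l)^d\le A_1\exp\!\big(\tfrac{l}{2C}\log\tfrac{1}{c_d q}\big)$ for a suitable $A_1$ yields $\PP[\,0\leftrightarrow y,\ D(0,y)\ge l\,]\le A_1 e^{-A_2 l}$ with $A_2=\tfrac{1}{2C}\log\tfrac{1}{c_d q}$.
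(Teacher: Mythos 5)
You are right that the paper does not prove this statement at all: it is quoted as background and justified by citing Antal and Pisztora (with a remark that their Theorem 1.2 is in fact uniform in the direction). So the only thing to assess is your sketch, and it contains a genuine gap: the deterministic lemma at its core is false. The event $\{0\leftrightarrow y,\ D(0,y)\ge l\}$ does \emph{not} force a single $*$-connected set of bad blocks of cardinality of order $l$. The obstruction to a short chemical distance is the \emph{total} size of the bad clusters that the path must circumvent, and these may be spread over many mutually non-$*$-adjacent components. Concretely, take $y=nKe_1$ and place $n$ disjoint ``baffles'', each a column of $h$ bad blocks (with all microscopic edges inside them closed), alternately hanging from the top and the bottom of a corridor so that any open path must weave around them. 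Then $D(0,y)\asymp nhK$ while $\|y\|_1=nK$, so $D(0,y)\ge C\|y\|_1$ as soon as $h\gtrsim C$; yet the largest $*$-connected bad component has only $h$ blocks, which is $\ll D(0,y)/C$ once $n$ is large. This configuration has probability of order $q^{nh}\approx q^{\,l/K^2}$, so it is consistent with the theorem, but it is invisible to a Peierls count over single $*$-connected bad sets; the ``topological surgery'' you invoke produces a separating $*$-connected surface whose size is governed by the geometric scale of the separation (of order $\|y\|_1/K$, or the box size), not by $D(0,y)$.

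The repair is essentially what Antal and Pisztora actually do, and it changes both halves of your argument. The correct deterministic statement is an upper bound of the form $D(0,y)\le C\sum_{i\in\Gamma}\bigl(1+\card(\C_i^{\mathrm{bad}})\bigr)$, where $\Gamma$ is a \emph{deterministic} nearest-neighbour path of renormalized blocks from the block of $0$ to the block of $y$ (of length $\asymp\|y\|_1/K$) and $\C_i^{\mathrm{bad}}$ is the bad component meeting block $i$; one must also connect $0$ and $y$ themselves to the backbone of crossing clusters, which is where the hypothesis $0\leftrightarrow y$ (and a separate treatment of the case where the common cluster is finite) enters — a point your sketch does not address. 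The probabilistic step is then not a Peierls count but a Chernoff bound for the sum of finitely dependent, exponentially integrable variables $\card(\C_i^{\mathrm{bad}})$ along the fixed path $\Gamma$: for $t\ge c|\Gamma|$ one gets $\PP\bigl[\sum_{i\in\Gamma}(1+\card(\C_i^{\mathrm{bad}}))\ge t\bigr]\le e^{-ct}$, and applying this with $t=l/C$ under the hypothesis $l\ge A_3\|y\|_1$ (which guarantees $t\ge c|\Gamma|$) yields the stated bound $A_1e^{-A_2l}$. Your renormalization set-up, the Liggett--Schonmann--Stacey domination, and the role you assign to $A_3$ are all appropriate; it is the identification of the ``obstruction'' as one large $*$-connected bad animal that does not survive scrutiny.
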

In fact Antal and Pisztora did not write their result exactly this way : their Theorem 1.1 is uniform in all directions, but in their Theorem 1.2, on which we rely, they do not state explicitly that the result is uniform with regard to the directions. However a close inspection of their arguments shows that it is indeed the case.

Let us denote by 
$\C$ the (a.s. unique) infinite cluster of the percolation we consider. Let 
$$\B_1(x,r) \,=\, \{ y\in \RR^d \,|\, \|x-y\|_1 \leq r \}$$
be the ball of center x and radius $r>0$ for the norm $\| \cdot \|_1$, and $\partial \B_1 (x,r)$ be its boundary. We say that a vertex $x$ is connected to a subset $A$ of $\RR^d$ if there exists $y\in A \cap \ZZ^d$ such that $x$ is connected to $y$, and we denote this event by $\{ x \leftrightarrow A \}$. The following results control the size of the finite clusters and of the holes in the infinite cluster, respectively.
\begin{thm}[Finite clusters]
\label{thmfinite}
Suppose that $p>p_c(d)$. There exist positive constants $A_4$, $A_5$ such that
$$  \forall r>0 \,,\quad \PP [ 0 \notin \C \,,\, 0 \leftrightarrow \partial \B_1(0,r)  ] \,\leq \, A_4 e^{-A_5 r} \,.  $$
\end{thm}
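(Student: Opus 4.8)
Since Theorem~\ref{thmfinite} is a classical fact about the high-density phase of Bernoulli percolation --- it is, for instance, contained in the work of Chayes, Chayes, Newman and Schonmann on the supercritical correlation length, and is a consequence of the slab theorem of Grimmett and Marstrand --- one may simply invoke the literature. For completeness let us indicate the strategy we would follow.

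The plan is to pass to a renormalized lattice on which the percolation of defects is as subcritical as we wish, and then to run a Peierls contour estimate. First I would fix, using the theorem of Grimmett and Marstrand, a mesoscopic scale $N$, and for $i \in \ZZ^d$ declare the box $B(i) = Ni + [0,2N)^d \cap \ZZ^d$ to be \emph{good} when a suitable local connectivity event holds --- for instance, that each of the $2^d$ sub-boxes $Ni + Nj + [0,N)^d$, $j\in\{0,1\}^d$, contains a cluster of diameter comparable to $N$ and these clusters are pairwise connected inside the union of adjacent sub-boxes. The point of this definition is twofold: on one hand, whenever a set of good boxes percolates in the renormalized lattice, the corresponding clusters in $\ZZ^d$ merge into one and belong to $\C$; on the other hand, by Grimmett--Marstrand, $\PP[B(i) \text{ is not good}] \to 0$ as $N \to \infty$. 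Since the field $(\ind{\{B(i) \text{ good}\}}, i\in\ZZ^d)$ has finite-range dependence, the comparison of Liggett, Schonmann and Stacey shows that for $N$ large it stochastically dominates an i.i.d. Bernoulli field whose density is as close to $1$ as we like; equivalently, the bad boxes are dominated by an i.i.d. field of density $\eps_N$ with $\eps_N \to 0$.

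Next I would use the topology of $\ZZ^d$. On the event $\{0\notin\C\}\cap\{0\leftrightarrow\partial\B_1(0,r)\}$ the cluster $C(0)$ of the origin is finite and has $\|\cdot\|_1$-diameter at least of order $r$. By the choice of the good-box event, $C(0)$ cannot traverse a region of percolating good boxes without being infinite; hence the set of boxes visited by $C(0)$ is enclosed by a $\ast$-connected surface $\Gamma$ of bad boxes. Because $\Gamma$ surrounds a set of diameter at least of order $r$, and because a $\ast$-connected surface enclosing a region of diameter $D$ must use at least of order $D$ boxes, we get $|\Gamma| \geq c_1 r/N$ for a dimensional constant $c_1>0$. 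Finally, since $\Gamma$ separates the origin from infinity, it meets the positive first coordinate axis at a box lying within $\ell_\infty$-distance $C_d|\Gamma|$ of the origin; the number of $\ast$-connected sets of cardinality $L$ through a fixed box is at most $c_d^{\,L}$, and such a set is entirely bad with probability at most $\eps_N^{\,L}$. Choosing $N$ so that $c_d\eps_N<1/2$ and summing,
$$ \PP\bigl[0\notin\C\,,\ 0\leftrightarrow\partial\B_1(0,r)\bigr] \,\leq\, \sum_{L\geq c_1 r/N} C_d\,L\,(c_d\eps_N)^L \,\leq\, A_4\, e^{-A_5 r}, $$
for suitable $A_4,A_5>0$; enlarging $A_4$ absorbs the finitely many small values of $r$ for which the bound is not yet informative.

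The delicate point is entirely in the renormalization: one must choose the good-box event so that, simultaneously, $0\notin\C$ really does force an enclosing surface of bad boxes around $C(0)$ and the bad-box probability tends to $0$ with $N$. This is exactly what the Grimmett--Marstrand theorem provides, and once it is in place the remaining contour bookkeeping is routine.
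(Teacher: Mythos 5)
The paper does not actually prove Theorem~\ref{thmfinite}: it is imported as a known percolation estimate, with references to Grimmett \cite{grimmett99} (Theorems 8.18 and 8.19) and to Chayes, Chayes, Grimmett, Kesten and Schonmann \cite{CCGKS}. Your opening paragraph, which invokes the literature, therefore already matches the paper's treatment, and for the purposes of this article that is all that is required.

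Your sketched renormalization-plus-Peierls argument is indeed the standard route to this estimate, but the good-box event you write down is too weak to support the key deduction. Knowing that each sub-box of $B(i)$ contains a cluster of diameter of order $N$ and that these designated clusters are pairwise connected only tells you that the \emph{crossing} clusters of a percolating family of good boxes merge into $\C$; it says nothing about the cluster $C(0)$ of the origin, which could thread through arbitrarily many good boxes while remaining disjoint from their crossing clusters, and hence finite. So the step ``$C(0)$ cannot traverse a region of percolating good boxes without being infinite'' does not follow from your definition, and without it there is no reason for $C(0)$ to be enclosed by a surface of bad boxes. The standard fix is to strengthen the good-box event with a uniqueness clause: every open cluster of diameter at least $N/10$ (say) inside the enlarged box must be connected, within a slightly larger box, to the designated crossing cluster. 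Grimmett--Marstrand still gives that the probability of failure of this stronger event tends to $0$ as $N\to\infty$ (this is exactly the form of renormalization used in \cite{AntalPisztora}), and with it a finite $C(0)$ of diameter at least $r$ really is forced to be surrounded by a $\ast$-connected contour of bad boxes of cardinality of order $r/N$; the rest of your bookkeeping (Liggett--Schonmann--Stacey domination, anchoring the contour, the Peierls sum) then goes through as you describe. You correctly flag that the delicacy lies in the choice of the good-box event, but the event you propose is not yet the right one.
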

For a reference for Theorem \ref{thmfinite}, see for instance Grimmett \cite{grimmett99}, Theorems 8.18 and 8.19, or Chayes, Chayes, Grimmett, Kesten and Schonmann \cite{CCGKS}.
\begin{thm}[Holes]
\label{thmholes}
Suppose that $p>p_c(d)$. There exist positive constants $A_6$, $A_7$ such that
$$  \forall r>0 \,,\quad \PP [\C \cap \B_1 (0,r) = \emptyset ] \,\leq\, A_6 e^{-A_7 r} \,.$$
\end{thm}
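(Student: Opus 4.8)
I would prove this by combining the standard static renormalization of supercritical percolation with a Peierls-type estimate carried out at the mesoscopic scale. Since $\B_1(0,r)$ contains the $\ell^\infty$-box $\Lambda(m):=\{-m,\dots,m\}^d$ with $m=\lfloor r/d\rfloor$, it suffices to bound $\PP[\C\cap\Lambda(m)=\emptyset]$ and then use $m\ge r/d-1$.

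The first ingredient is the renormalization (see e.g. \cite{grimmett99} and the references therein). For $p>p_c(d)$ and any $\eta>0$ one can fix an integer $L$ and, for each $i\in\ZZ^d$, a ``good block'' event $G_i$ depending only on the states of the edges lying in $iL+\{-L,\dots,2L\}^d$, in such a way that: (a) $\PP[G_i]\ge 1-\eta$; (b) the random field $(\ind{G_i})_{i\in\ZZ^d}$ stochastically dominates an i.i.d. Bernoulli field of parameter $1-\eta$; and (c) the open ``crossing clusters'' attached to two neighbouring good blocks are joined inside $\ZZ^d$, so that any infinite connected set of good blocks produces an infinite open cluster, hence a subset of $\C$, the crossing cluster of $B_i$ being contained in $iL+\{-L,\dots,2L\}^d$. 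I would choose $\eta$ as small as needed for the final step.

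The deduction goes as follows. On $\{\C\cap\Lambda(m)=\emptyset\}$, no good block $B_i$ whose enlarged box $iL+\{-L,\dots,2L\}^d$ is contained in $\Lambda(m)$ can lie in the infinite cluster of good blocks: otherwise by (c) its crossing cluster would be a nonempty subset of $\C\cap\Lambda(m)$. Since the indices of such blocks contain an $\ell^\infty$-box $\Lambda^{\mathrm{bl}}(k)$ of the block lattice with $k\ge\lfloor m/(3L)\rfloor-1$, the event $\{\C\cap\Lambda(m)=\emptyset\}$ is contained in the event $N_k$ that no vertex of $\Lambda^{\mathrm{bl}}(k)$ belongs to an infinite cluster of the good-block field. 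The event $N_k$ is decreasing in the block field, so by (b), $\PP[N_k]\le\PP_{1-\eta}[\C\cap\Lambda(k)=\emptyset]$, where $\PP_{1-\eta}$ denotes i.i.d. Bernoulli percolation of parameter $1-\eta$ on $\ZZ^d$.

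It remains to control $\PP_q[\C\cap\Lambda(k)=\emptyset]$ for $q=1-\eta$ close to $1$ by a Peierls argument. On this event, the union $U$ of the open clusters meeting $\Lambda(k)$ is finite, contains $\Lambda(k)$, and every edge of its edge-boundary $\partial_e U$ is closed. By the edge-isoperimetric inequality in $\ZZ^d$, $|\partial_e U|\ge c_d\,k^{d-1}$; and the number of possible boundaries $\partial_e U$ with $|\partial_e U|=n$ is at most $c_1(k)\,\lambda^n$ for some polynomial $c_1$ and some lattice constant $\lambda$ (a standard count of minimal cutsets surrounding $\Lambda(k)$). Choosing $\eta$ so small that $\lambda\eta\le 1/2$,
$$\PP[\C\cap\Lambda(k)=\emptyset]\,\le\,c_1(k)\sum_{n\ge c_d k^{d-1}}(\lambda\eta)^n\,\le\,C\,e^{-c\,k}\,.$$
Putting the pieces together and using $k\ge c'r$ yields the claim. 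The main obstacle is property (c) of the renormalization — that good blocks glue their crossing clusters together, so that an infinite good-block cluster yields part of $\C$ in the prescribed region; this is the heart of Pisztora's construction but is by now classical, so I would invoke it rather than reprove it.
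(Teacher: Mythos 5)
The paper does not prove this statement at all: it is quoted from the literature, with the proof attributed to Durrett and Schonmann \cite{DurrettSchonmann} in dimension $2$ and to an extension of their argument (or to Grimmett--Marstrand \cite{GrimmettMarstrand}) for $d\geq 3$. So your renormalization-plus-high-density strategy is necessarily a different route from ``the paper's proof''; it is in fact the standard modern way to establish such estimates, and the reduction part of your argument is sound. In particular, the passage from $\{\C\cap\Lambda(m)=\emptyset\}$ to the decreasing event $N_k$ on the block lattice, and the transfer of $N_k$ to i.i.d.\ site percolation at density $1-\eta$ via Liggett--Schonmann--Stacey domination, are correct (given property (c) of the block construction, which you are right to invoke rather than reprove).

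The gap is in the final Peierls step. The event $\{\C\cap\Lambda(k)=\emptyset\}$, i.e.\ that no vertex of $\Lambda(k)$ lies in an infinite open cluster of the (site) percolation at density $1-\eta$, does \emph{not} imply the existence of a closed cutset around $\Lambda(k)$. Two concrete failures: first, the ``union $U$ of the open clusters meeting $\Lambda(k)$'' need not contain $\Lambda(k)$ (closed vertices of $\Lambda(k)$ belong to no open cluster); second, even after adjoining $\Lambda(k)$ to $U$ by hand, the exterior boundary of $U$ need not be closed --- in the configuration where every site of $\Lambda(k)$ is closed and every other site is open, the event occurs, $U=\Lambda(k)$, and its entire boundary is open, so the claimed inequality $|\partial_e U|\geq c_d k^{d-1}$ with all boundary states closed is simply false on part of the event. (You also phrase the cutset in terms of closed \emph{edges}, whereas the block process is a site process; one should work with closed $*$-connected vertex cutsets.) The conclusion you need is nonetheless true and standard at density close to $1$, but it requires a different decomposition: either split the event according to whether there exists a closed $*$-connected cutset separating $\Lambda(k)$ from infinity (Peierls) or, failing that, show that a positive fraction of $\Lambda(2k)$ must be closed; or, more cleanly, chain crossing clusters of a geometrically growing sequence of overlapping boxes from $\Lambda(k)$ out to infinity, each crossing failing with probability at most $e^{-c(\mathrm{side})^{d-1}}$, and conclude by a union bound dominated by its first term $e^{-ck^{d-1}}$. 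With that step repaired, your proof is complete.
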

Theorem \ref{thmholes} was proved in dimension $2$ by Durrett and Schonmann \cite{DurrettSchonmann}, and their proof can be extended to dimension $d\geq 2$. For $d\geq 3$, Theorem \ref{thmholes} can also be deduced from Grimmett and Marstrand's slab's result proved in \cite{GrimmettMarstrand}.


\subsection{Organization of the proofs}

The paper is organized as follows. In section \ref{secmu} we investigate the convergence of the rescaled times towards a time constant. In section \ref{secwT}, we prove that the times $\wT$ have good moment properties, then by a classical subadditive argument we prove Theorem \ref{thmwT}. In section \ref{secT}, we compare the times $T$, $T^*$ and $\wT$, and we deduce Theorem \ref{thmT} from Theorem \ref{thmwT}. In section \ref{secwB}, by classical methods, we prove that the convergence of the rescaled times $\wT$ towards $\wmu $ is uniform in all directions (Theorem \ref{thmwB} (i)). Under the assumption that the time constant $\wmu(e_1)$ is strictly positive, we prove that this uniform convergence is equivalent to the strong shape theorem for $\wB_t$ (Theorem \ref{thmwB} (ii)). Then we compare again $T$, $T^*$ and $\wT$ in section \ref{secB} to obtain the weak shape theorems for $B_t^*$ and $B_t$ (Theorem \ref{thmB}). Finally in section \ref{secpos} we study the positivity of the time constant. First we study in Section \ref{secpos1} the positivity of $\wmu(e_1)$ when $F(\{ 0 \}) \neq p_c(d)$. Then we study in section \ref{secLD} the lower large deviations of some passage time below the time constant when the time constant is positive. Finally this result is used in section \ref{secpos2} to prove Theorem \ref{thmpositivity}.


\section{Time constant}
\label{secmu}

\subsection{Definition of the time constant and first properties}
\label{secwT}

We recall that $(t(e), e \in \EE^d)$ is an i.i.d. family of random passage times of law $F$ on $[0,+\infty]$. We assume that $F([0,+\infty]) >p_c(d)$. We select a real number $M$, depending on $F$, such that $F([0,M])>p_c(d)$. For $x\in \RR^d$, $x=(x_1,\dots , x_d)$, let $\|x\|_1 = \sum_{i=1}^d |x_i|$, $\| x \|_2 = \sqrt{\sum_{i=1}^d x_i^2}$ and $\|x\|_{\infty} = \max_{i\in \{1,\dots ,d\}} |x_i| $. We denote by $\B_i (x, r)$ the ball of center $x\in \RR^d$ and radius $r>0$ for the norm $\| \cdot \|_i$:
$$\B_i (x,r) \,=\, \{ y\in \RR^d \,|\, \|x-y\|_i \leq r \} \,.$$
Let $\C_M$ (resp. $\C_\infty$) the (a.s. unique) infinite cluster of the Bernoulli percolation $(\ind{\{ t(e) \leq M \}}, e\in \EE^d)$ (resp. $(\ind{\{ t(e) < \infty \}}, e\in \EE^d)$). Notice that $\C_M \subset \C_\infty$. We denote by $D_M (x,y)$ (resp. $D_\infty (x,y)$) the chemical distance between to vertices $x$ and $y$ in the percolation $ (\ind{\{ t(e) \leq M \}}, e\in \EE^d)$ (resp. $ (\ind{\{ t(e) <\infty \}}, e\in \EE^d)$), and we denote by \smash{$ \{ x \overset{M}{\longleftrightarrow} y \}$} (resp. \smash{$ \{ x \overset{\infty}{\longleftrightarrow} y \}$}) the event that $x$ is connected to $y$ in this percolation. We recall that for all $x,y \in \ZZ^d$, $\wT (x,y) = T(\wx, \wy)$ where $\wx $ is the random point of $\C_M$ that is the closest to $x$ for the norm $\| \cdot \|_1$, with a deterministic rule to break ties. We denote by $|A|$ the cardinality of a set $A$.

Before proving Theorem \ref{thmwT} we need a control on the tail of the distribution of $\wT(0,x)$.
\begin{prop}
\label{prop1}
There exist positive constants $C_1$, $C_2$ and $C_3$ such that 
$$ \forall x \in \ZZ^d \,, \,\, \forall l\geq C_3 \|x\|_1\,, \quad \PP [\wT (0,x) > l ] \,\leq C_1 e^{-C_2 l} \,. $$
\end{prop}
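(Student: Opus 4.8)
The plan is to reduce the tail estimate for $\wT(0,x)$ to the three percolation inputs recalled in Section \ref{secbackground}, applied to the supercritical Bernoulli percolation $(\ind{\{t(e)\le M\}}, e\in\EE^d)$, whose parameter $F([0,M])$ exceeds $p_c(d)$ by our choice of $M$. The first step is the observation that, by construction, $\w0$ and $\wx$ both lie in $\C_M$, hence are connected in the subgraph of edges of passage time at most $M$; taking a geodesic $\gamma$ for the chemical distance $D_M$ between them, every edge of $\gamma$ has passage time at most $M$, so
$$ \wT(0,x) \,=\, T(\w0,\wx) \,\le\, \sum_{e\in\gamma} t(e) \,\le\, M\, D_M(\w0,\wx) \,. $$
It therefore suffices to prove the asserted exponential tail bound for $M D_M(\w0,\wx)$.

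Next I would introduce a length scale $r>0$, to be chosen proportional to $l$, and the "localisation" event
$$ A_r \,=\, \{\|\w0\|_1\le r\}\cap\{\|\wx-x\|_1\le r\} \,, $$
i.e.\ the event that $\C_M$ meets both $\B_1(0,r)$ and $\B_1(x,r)$. By Theorem \ref{thmholes} and translation invariance, $\PP[A_r^c]\le 2A_6 e^{-A_7 r}$. On $A_r$ we decompose over the (lattice) positions of $\w0$ and $\wx$; since $\w0$ and $\wx$ always belong to $\C_M$, hence are connected in the $M$-percolation, a union bound gives
$$ \PP\big[\{MD_M(\w0,\wx)>l\}\cap A_r\big] \,\le\, \sum_{\substack{y\in\B_1(0,r)\cap\ZZ^d\\ z\in\B_1(x,r)\cap\ZZ^d}} \PP\big[ y\overset{M}{\longleftrightarrow} z \,,\ D_M(y,z) > l/M \big] \,. $$
For each pair in the sum, $\|y-z\|_1\le\|x\|_1+2r$, and the number of pairs is at most $C_d(1+r)^{2d}$ for a dimensional constant $C_d$. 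Hence, as soon as $l/M\ge A_3(\|x\|_1+2r)$, Theorem \ref{thmAP} (again via translation invariance) bounds each summand by $A_1 e^{-A_2 l/M}$, and altogether
$$ \PP[\wT(0,x)>l] \,\le\, 2A_6 e^{-A_7 r} + C_d(1+r)^{2d}\, A_1 e^{-A_2 l/M} \,. $$

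Finally I would set $r=\kappa l$ with $\kappa=1/(4MA_3)$. Then $2A_3 r=l/(2M)$, so the constraint $l/M\ge A_3(\|x\|_1+2r)$ reduces to $l\ge 2MA_3\|x\|_1$, and we may take $C_3=2MA_3$ (the case $x=0$ being trivial, as $\wT(0,0)=0$). The polynomial factor $(1+\kappa l)^{2d}$ is absorbed into the exponentials at the cost of slightly shrinking the rate, so both terms on the right-hand side are bounded by $C_1 e^{-C_2 l}$ for suitable positive constants $C_1,C_2$, which is the claim.

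I do not expect a genuine obstacle here: the statement is essentially a quantitative packaging of the cited percolation results. The only delicate point is that $\w0$ and $\wx$ are random vertices, so Theorem \ref{thmAP} cannot be applied to them directly; this is handled by first localising them with Theorem \ref{thmholes} and then paying a union-bound factor that is polynomial in $r\sim l$ and thus harmless against the exponential decay.
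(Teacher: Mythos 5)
Your proof is correct and follows essentially the same route as the paper's: bound $\wT(0,x)$ by $M D_M(\w0,\wx)$, localise $\w0$ and $\wx$ in balls of radius proportional to $l$ via Theorem \ref{thmholes}, then union-bound over their positions and apply Theorem \ref{thmAP}, absorbing the polynomial entropy factor into the exponential. Even your choice of radius $l/(4MA_3)$ and of $C_3 = 2MA_3$ coincides with the paper's parameters ($\eps = C^{-1}$, $C = 2MA_3$).
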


\begin{proof}
For $a,b \in \ZZ^d$, we always have
$$ T(a,b) \,\leq\, M \, D_M (a,b) \,.$$ 
Indeed if $D_M(a,b)=+\infty$ there is nothing to prove. Suppose that $D_M(a,b) <\infty$, thus there exist paths from $a$ to $b$ that are open for the percolation $(\ind{\{ t(e) \leq M \}}, e\in \EE^d)$. Let $\gamma$ be such a path, then $ T(a,b)\leq T(\gamma) \leq M |\gamma | $. Taking the infimum of the length of such paths $\gamma$, we obtain the desired inequality. Consider $x\in \ZZ^d$ and $l\geq C \|x\|_1$, for a constant $C$ to be fixed. We have for all $\eps >0$
\begin{align*}
\PP [\wT (0,x)> l] & \,\leq \, \PP [ \w0 \notin \B_1(0,\eps l/2) ] + \PP [ \wx \notin \B_1(x,\eps l/2) ] \\
& \quad + \sum_{\tiny \begin{array}{c} a\in \B_1 (0,\eps l /2) \cap \ZZ^d\\ b \in \B_1 (x,\eps l /2) \cap \ZZ^d \end{array}} \PP[ \w0 = a\,,\, \wx = b \,,\, D_M (a,b) \geq l/M ]\\
& \,\leq \, \PP[\C_M \cap \B_1(0,\eps l/2) = \emptyset] + \PP[\C_M \cap \B_1(x,\eps l/2) = \emptyset]\\ 
& \quad + \sum_{\tiny \begin{array}{c} a\in \B_1 (0,\eps l /2) \cap \ZZ^d\\ b \in \B_1 (x,\eps l /2) \cap \ZZ^d \end{array}} \PP[ a \overset{M}{\longleftrightarrow}  b \,,\, D_M (a,b) \geq l/M ]\,.
\end{align*}
By Theorem \ref{thmholes} applied to the percolation $(\ind{\{ t(e) \leq M \}}, e\in \EE^d)$ we get that
$$ \PP[\C_M \cap \B_1(0,\eps l/2) = \emptyset]  \,=\,  \PP[\C_M \cap \B_1(x,\eps l/2) = \emptyset] \,\leq\, A_6 e^{-A_7 \eps l/2} \,.$$
For all $a\in \B_1 (0,\eps l /2) \cap \ZZ^d$ and $ b \in \B_1 (x,\eps l /2) \cap \ZZ^d$, we have
$$ \| a-b \|_1  \,\leq \,\| x\|_1 + \eps l \,\leq\, l \left( \frac{1}{C} + \eps \right)\,,$$
thus
$$ l\,\geq\, \frac{1}{C^{-1} + \eps} \| a-b \|_1 \,.  $$
Let $A_3$ the constant defined in Theorem \ref{thmAP} applied to the percolation $(\ind{\{ t(e) \leq M \}}, e\in \EE^d)$. Choose $\eps = C^{-1}$, and $C = 2 M A_3$. Then $l/M \geq A_3  \|a-b\|_1 $, and applying Theorem \ref{thmAP} we get
$$  \PP[ a \overset{M}{\longleftrightarrow}  b \,,\, D_M (a,b) \geq l/M ] \,\leq \, A_1 e^{-A_2 l/M} $$
uniformly in $a\in \B_1 (0,\eps l /2) \cap \ZZ^d$ and $ b \in \B_1 (x,\eps l /2) \cap \ZZ^d$. Since $|\B_1 (0,\eps l /2) \cap \ZZ^d| = |\B_1 (x,\eps l /2) \cap \ZZ^d| \leq C' \eps^d l^d $ for some constant $C'$, Proposition \ref{prop1} is proved.
\end{proof}
\begin{proof}[Proof of Theorem \ref{thmwT}]
The structure of this proof is very classical (see for instance \cite{Kesten:StFlour}, Theorem 1.7), so we try to give all the arguments briefly. Similarly to $T$, $\wT$ satisfies the triangle inequality :
\begin{equation}
\label{inegtrig}
\forall x,y,z\in \ZZ^d \,,\quad \wT(x,y) \,\leq \, \wT(x,z) + \wT(z,y)\,.
\end{equation}
Let $x\in \ZZ^d$. The family $(\wT (mx,nm), 0\leq m <n)$ indexed by $m,n \in \NN$ is a stationary process which is subbadditive by Inequality (\ref{inegtrig}). By Proposition \ref{prop1} we know that this process is integrable. Following Kingman's subadditive ergodic theorem (see \cite{Kingman73} or section 2 in \cite{Kesten:StFlour}) we deduce that 
$$  \lim_{n\rightarrow \infty} \frac{\wT (0,nx)}{n} \textrm{ exists a.s. and in }L^1\,.$$
Moreover by ergodicity this limit, that we denote by $\wmu (x)$, is constant a.s. and we know that
$$ \mu(x)  \,=\, \lim_{n \rightarrow \infty} \frac{\EE [\wT (0,nx)]}{n}  \,=\, \inf_{n>0} \frac{\EE [\wT (0,nx)]}{n}  \,. $$
If $x\in \QQ^d$, let $N\in \NN^*$ be such that $Nx \in \ZZ^d$, and define similarly
$$ \wmu (x) \,=\, \lim_{n\rightarrow \infty} \frac{\wT(0,nNx)}{nN}  \quad \textrm{a.s. and in }L^1.$$
The function $\wmu$ is well defined on $\QQ^d$ ($\wmu (x)$ does not depend of the choice of $N$) and is homogeneous in the sense that for all $x\in \QQ^d$, for all $r\in \QQ^+$,
\begin{equation}
\label{eqhom}
\wmu (rx) \,=\, r \wmu (x) \,.
 \end{equation}
The function $\wmu$ is invariant under any permutation or reflection of the coordinate axis since the model does. By (\ref{inegtrig}) we deduce that for all $x,y\in \QQ^d$
\begin{equation}
\label{inegtrig2}
  \wmu (y) \,\leq\, \wmu(x) + \wmu(y-x)\,,
 \end{equation}
 and
\begin{equation}
\label{eqmaj}
 \wmu (x) \,\leq \wmu (e_1) \|x\|_1\,, 
  \end{equation}
thus
\begin{equation}
\label{eqcont}
|\wmu (x) - \wmu (y) | \,\leq \, \wmu (y-x) \,\leq \, \wmu (e_1) \|y-x\|_1 \,.
 \end{equation}
We can therefore extend $\wmu$ by continuity to $\RR^d$, and (\ref{eqhom}), (\ref{inegtrig2}), (\ref{eqmaj}) and (\ref{eqcont}) remain valid for all $x,y\in \RR^d$ and $r\in \RR^+$. By (\ref{eqmaj}) if $\wmu(e_1)=0$ then $\wmu(x) = 0$ for all $x\in \RR^d$. Conversely, if there exists $x=(x_1,\dots , x_d) \in \RR^d \smallsetminus \{0\}$ such that $\wmu (x) =0$, then by symmetry we can suppose for instance that $x_1 \neq 0$. By (\ref{inegtrig2}) we obtain
$$ 2 |x_1| \wmu (e_1) \,=\, \wmu (2x_1,0, \dots, 0) \,\leq \, \wmu (x_1, x_2,... ,x_d ) + \wmu(x_1, -x_2,... ,-x_d )  \,=\, 0 \,,  $$ 
thus $\wmu(e_1) =0$.
\end{proof}


\subsection{From $\wT$ to $T$}
\label{secT}

We now compare $T^*$ with $\wT$ :
\begin{prop}
\label{propT*wT}
For all $x\in \ZZ^d$, for all $\eps>0$, we have
$$  \lim_{n\rightarrow \infty}\PP \left[ |T^*(0,nx) - \wT(0,nx)| \geq \eps n \right] \,=\, 0 \,. $$
\end{prop}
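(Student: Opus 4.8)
The plan is to use the triangle inequality for the pseudo-metric $T$ to bound $|T^*(0,nx)-\wT(0,nx)|$ by two ``endpoint correction'' terms which, crucially, do not grow with $n$. Denote by $\widetilde{nx}$ the point of $\C_M$ closest to $nx$ and by $(nx)^*$ the point of $\C_\infty$ closest to $nx$, so that $\wT(0,nx)=T(\w0,\widetilde{nx})$ and $T^*(0,nx)=T(0^*,(nx)^*)$. Applying the triangle inequality and the symmetry of $T$, I would first establish
$$\bigl|T^*(0,nx)-\wT(0,nx)\bigr| \,\leq\, T(\w0,0^*)+T\bigl(\widetilde{nx},(nx)^*\bigr)\,,$$
which follows from $T(\w0,\widetilde{nx})\leq T(\w0,0^*)+T(0^*,(nx)^*)+T((nx)^*,\widetilde{nx})$ together with the symmetric inequality obtained by exchanging the roles of the pairs $(\w0,\widetilde{nx})$ and $(0^*,(nx)^*)$.

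Next I would check that each of the two correction terms is almost surely finite. Since $\C_M\subset\C_\infty$, the point $\w0$ belongs to $\C_\infty$, and so does $0^*$; as $\C_\infty$ is a connected cluster for the percolation $(\ind{\{t(e)<\infty\}},e\in\EE^d)$, there is a path joining $\w0$ to $0^*$ using only edges with finite passage time, whence $T(\w0,0^*)<\infty$ a.s. The same holds for $T(\widetilde{nx},(nx)^*)$. Moreover, the whole construction — the clusters $\C_M$ and $\C_\infty$, the closest points $\w0$, $0^*$, $\widetilde{nx}$, $(nx)^*$ with their deterministic tie-breaking rules, and the resulting passage times — commutes with the translation by $nx$ of the i.i.d. field $(t(e),e\in\EE^d)$; hence $T(\widetilde{nx},(nx)^*)$ has the same law as $T(\w0,0^*)$, a law that does not depend on $n$.

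Combining these facts, for every fixed $\eps>0$ I would conclude
\begin{align*}
\PP\bigl[\,|T^*(0,nx)-\wT(0,nx)|\geq\eps n\,\bigr] &\,\leq\, \PP\bigl[\,T(\w0,0^*)\geq\eps n/2\,\bigr]+\PP\bigl[\,T(\widetilde{nx},(nx)^*)\geq\eps n/2\,\bigr]\\
&\,=\, 2\,\PP\bigl[\,T(\w0,0^*)\geq\eps n/2\,\bigr]\,,
\end{align*}
and the last quantity tends to $0$ as $n\to\infty$ because $T(\w0,0^*)$ is a.s. finite. This argument has no real obstacle: the only points deserving care are the verification that $T(\w0,0^*)<\infty$ a.s. (which rests solely on $\w0$ and $0^*$ lying in the same infinite cluster $\C_\infty$ of finite-time edges, and in particular requires no moment assumption) and the translation-invariance identity for the term located at $nx$. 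Notably, no quantitative percolation estimate such as Theorem \ref{thmAP} or Theorem \ref{thmholes} is needed here — the qualitative a.s. finiteness is enough, since the comparison is only up to an error of order $\eps n$.
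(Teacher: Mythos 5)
Your proof is correct and follows essentially the same route as the paper: the triangle inequality reduces the difference to the two endpoint corrections $T(\w0,0^*)$ and $T(\widetilde{nx},(nx)^*)$, which are equidistributed by translation invariance and a.s. finite because all four points lie in $\C_\infty$. Your write-up merely makes explicit the translation-invariance and finiteness steps that the paper leaves implicit.
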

\begin{proof}
By (\ref{inegtrig2}) we have
\begin{align*}
\PP \left[ |T^*(0,nx) - \wT(0,nx)| \geq \eps n  \right] & \,\leq \,\PP \left[ T(0^*,\w0 ) + T(nx^*,\widetilde{nx} ) \geq \eps n   \right] \\
& \,\leq \, 2\, \PP \left[ T(0^*,\w0) \geq  \eps n/2  \right]
\end{align*}
which goes to $0$ as $n$ goes to infinity since $T(0^*,\w0)<\infty$ because both $0^*$ and $\w0$ belong to $\C_\infty$.
\end{proof}

\begin{proof}[Proof of Theorem \ref{thmT}]
The fact that $\lim_{n\rightarrow \infty}T^*(0,nx)/n = \wmu (x)$ in probability is a simple consequence of Theorem \ref{thmwT} and Proposition \ref{propT*wT}. Since $T^*(0,nx)$ does not depend on the $M$ chosen to define $\wT$, $\wmu(x)$ does not depend on $M$, for all $x\in \ZZ^d$, and thus for all $x\in \RR^d$ by construction. Let $f$ be a continuous bounded real function defined on $\RR \cup \{ + \infty \}$, and let us denote by $\|f\|_\infty$ the supremum of $|f|$. Let $x\in \ZZ^d$. It remains to prove that
$$ \lim_{n\rightarrow \infty} \EE \left[ f\left( \frac{T(0,nx)}{n} \right) \right]  \,=\, \theta^2 f(\wmu (x)) + (1-\theta^2) f(+\infty)\,, $$
where $\theta = \PP[0 \in \C_\infty]$. On one hand, we have
\begin{align}
\label{eq2.1}
\bigg| \EE  \left[ f\left( \frac{T(0,nx)}{n} \right)  \ind{\{ 0\in \C_\infty , nx\in \C_\infty \} }  \right] & - \theta^2 f(\wmu (x))  \bigg| \nonumber \\
& \,\leq \,  \EE \left[ \left|f \left( \frac{T(0,nx)}{n} \right)  - f( \wmu (x))\right|  \ind{\{ 0\in \C_\infty , nx\in \C_\infty \} }\right] \nonumber  \\
& \qquad + |f(\wmu(x))| \times |\theta^2 - \PP[0\in \C_\infty , nx\in \C_\infty]| \,.
\end{align}
Since $f$ is continuous in $\wmu (x)$, for all $\eps >0$, there exists $\eta >0$ such that
\begin{align}
\label{eq2.2}
\EE \Bigg[ \left|f \left( \frac{T(0,nx)}{n} \right)  - f( \wmu (x))\right| & \ind{\{ 0\in \C_\infty , nx\in \C_\infty \} }\Bigg] \nonumber \\
& \,\leq \, \eps \, \PP \left[ \left| \frac{T(0,nx)}{n} - \wmu(x) \right| \leq \eta \,,\,  0\in \C_\infty \,,\, nx\in \C_\infty \right] \nonumber \\
& \qquad + \|f\|_\infty \PP \left[ \left| \frac{T(0,nx)}{n} - \wmu(x) \right| > \eta \,,\,  0\in \C_\infty \,,\, nx\in \C_\infty \right] \nonumber \\
& \,\leq \, \eps +  \|f\|_\infty \PP \left[ \left| \frac{ T^*(0,nx)}{n} - \wmu(x) \right| > \eta \right]\,. 
\end{align}
The convergence of $T^*(0,nx)/n$ towards $\wmu(x)$ in probability implies that the second term of the right hand side of (\ref{eq2.2}) goes to $0$ when $n$ goes to infinity. Moreover, by the FKG inequality, we have
$$  \PP[0\in \C_\infty , nx\in \C_\infty] \,\geq \, \PP[0\in \C_\infty]  \times \PP [ nx\in \C_\infty] \,=\, \theta ^2   \,.$$
Conversely for all fixed $p\in \NN$, for all $n$ large enough, $\B_1 (0,p) \cap \B_1 (nx,p) = \emptyset$ thus
$$ \PP[0\in \C_\infty , nx\in \C_\infty]  \,\geq \, \PP [ 0 \overset{\infty}{\longleftrightarrow} \partial \B_1(0,p) ] \, \PP [ nx \overset{\infty}{\longleftrightarrow} \partial \B_1(nx,p) ]  \,=\, \PP [ 0 \overset{\infty}{\longleftrightarrow} \partial \B_1(0,p) ] ^2 \,.$$
This implies that $\theta^2 \leq \limsup_{n\rightarrow \infty}  \PP[0\in \C_\infty , nx\in \C_\infty] \leq \PP [ 0 \overset{\infty}{\longleftrightarrow} \partial \B_1(0,p) ] ^2$, for all $p$, and sending $p$ to infinity we obtain 
\begin{equation}
\label{eq2.3}
\lim_{n\rightarrow \infty}  \PP[0\in \C_\infty , nx\in \C_\infty]= \theta^2 \,.
\end{equation}
Combining (\ref{eq2.2}) and (\ref{eq2.3}), we get that the right hand side of (\ref{eq2.1}) goes to $0$ as $n$ goes to infinity. On the other hand, we have
\begin{align}
\label{eq2.4}
\bigg| \EE  \left[ f\left( \frac{T(0,nx)}{n} \right)  \ind{\{ 0\notin \C_\infty \} \cup \{ nx\notin \C_\infty \} }  \right] & - \left( 1-\theta^2 \right) f(+\infty)  \bigg| \nonumber \\
& \,\leq\,  \EE \left[ \left|f \left( \frac{T(0,nx)}{n} \right)  - f(+ \infty)\right|  \ind{\{ 0\notin \C_\infty \} \cup \{ nx\notin \C_\infty \} }  \right] \nonumber  \\
& \qquad + |f(+\infty)| \times \left| \PP[\{ 0\notin \C_\infty \} \cup \{ nx\notin \C_\infty \}] -  \left( 1-\theta^2 \right)\right| \nonumber \\
& \,\leq \, 2 \, \|f\|_\infty \PP \left[ \left\{ T(0,nx) \neq +\infty \right\} \bigcap \left( \{ 0\notin \C_\infty \} \cup \{ nx\notin \C_\infty \} \right)  \right] \nonumber \\
& \qquad + \|f\|_\infty \times  |\theta^2 - \PP[0\in \C_\infty , nx\in \C_\infty]| \nonumber \\
& \,\leq \, 2 \, \|f\|_\infty \PP \left[ 0 \notin \C_\infty \,,\, 0 \overset{\infty}{\longleftrightarrow} \partial \B_1(0, n \|x\|_1) \right] \nonumber \\
& \qquad + \|f\|_\infty \times  |\theta^2 - \PP[0\in \C_\infty , nx\in \C_\infty]| \,.
\end{align}
By Theorem \ref{thmfinite}, the first term of the right hand side of (\ref{eq2.4}) vanishes at the limit $n\rightarrow \infty$, and so does the second term by (\ref{eq2.3}). We conclude by combining (\ref{eq2.1}) and (\ref{eq2.4}).
\end{proof}

\begin{rem}
Cox, Durrett \cite{CoxDurrett} and Kesten \cite{Kesten:StFlour} obtain a stronger convergence for $T(0,nx)/n$ even without moment condition, since (for finite passage times) they prove 
$$  \lim_{n\rightarrow \infty} \frac{T(0,nx)}{n} \,=\, \hat \mu (x) \quad \textrm{in probability}\,. $$
We cannot hope to obtain such a convergence. The infinite cluster $\C_\infty$ of finite passage times has holes. Even on the event $\{0\in \C_\infty \}$, the times $T(0,nx)$ are infinite for all $n$ such that $nx \notin \C_\infty$, thus a.s. for an infinite sequence of $n$. They also prove that
$$ \liminf_{n\rightarrow \infty} \frac{T(0,nx)}{n} \,\geq \, \hat \mu (x) \quad \textrm{a.s.} $$
It comes from the fact that in their setting $T(0,nx) \leq \hat T ( 0, nx) = T(S(0), S(nx))$ for $n$ not too small, since $S(0)$ and $S(nx)$ are shells that surround $0$ and $nx$ respectively. We do not have such a comparison between $T$ and $\wT$. However, we can compare $T$ with $T^*$. Indeed, we have
\begin{align*}
\PP \left[ T(0,nx) < T^* (0,nx) \right] &\,\leq\, \PP [ \{ T(0,nx) <\infty \} \cap \left( \{ 0 \notin \C_\infty \} \cup \{ nx \notin \C_\infty \} \right)] \\
& \,\leq \, \PP \left[ 0 \notin \C_\infty \,,\, 0\overset{\infty}{\longleftrightarrow} nx \right] \\
& \, \leq \, A_4 e^{-A_5 n \|x\|_1}
\end{align*}
by Theorem \ref{thmfinite}. By an application of Borel Cantelli's Lemma, we obtain that 
$$ \textrm{a.s., for all }n \textrm{ large enough,} \quad T(0,nx) \,\geq \,T^* (0,nx) \,. $$ 
Nevertheless, since we do not have an a.s. convergence for $T^*(0,nx)/n$, we do not get an a.s. lower bound for $\liminf_{n\rightarrow \infty} T(0,nx)/n$.
\end{rem}


\section{Shape theorem}
\label{secshape}

\subsection{Strong shape theorem for $\wB_t$}
\label{secwB}

In this section we suppose that $F([0,+\infty[) > p_c(d)$, and for the proof of Theorem \ref{thmwB} (ii) we also suppose that $\wmu (e_1) >0$, thus $\wmu$ is a norm. We emphasize that in section \ref{secpos}, during the study of the positivity of $\wmu$ (proof of Theorem \ref{thmpositivity}), we will use the fact that the convergence of $\wB_t$ towards $\B_{\tilde \mu}$ is proved as soon as $F([0,+\infty[) > p_c(d)$ and $\wmu (e_1) > 0$ (without any assumption on $F(\{0\})$ a priori).

The proof of Theorem \ref{thmwB} is classic since the times $\wT$ have good moment properties. We could mimic the proof of Theorem 3.1 in \cite{Kesten:StFlour}. We rather decide to follow the steps of Garet and Marchand's proof in \cite{GaretMarchand04} (see also \cite{Scholler:these}). We need some preliminary lemmas.

\begin{lem}
\label{lem1}
There exists a constant $C_4$ such that for any $\eps >0$, a.s., there exists $R>0$ such that for all $x,y\in \ZZ^d$ we have
$$ \left. \begin{array}{r} \|x\|_1\, \geq\, R \\ \|x- y\|_1 \,\leq\, \eps \|x\|_1 \end{array} \right\} \Longrightarrow \wT (x,y) \,\leq\, C_4 \eps \|x\|_1 \,. $$
\end{lem}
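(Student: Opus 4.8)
The plan is to use the exponential tail bound of Proposition~\ref{prop1} together with a Borel--Cantelli argument along a geometric sequence of scales. First I would fix $\eps>0$. The idea is that if $\|x-y\|_1 \le \eps\|x\|_1$, then by the triangle inequality~(\ref{inegtrig}), $\wT(x,y) \le \wT(0,x) + \wT(0,y) \le \wT(0,x) + \wT(0,x) + \wT(x,y)$ — that is not quite the estimate we want, so instead I would bound $\wT(x,y)$ directly via $\wT(x,y) \le C_3\|x-y\|_1$ whenever the pair $(x,y)$ is ``good'', where good means $\wT(x,y) \le C_3\|x-y\|_1$. Concretely, set $C_4 = C_3$ (the constant from Proposition~\ref{prop1}). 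For a dyadic scale $2^k \le \|x\|_1 < 2^{k+1}$ and $\|x-y\|_1 \le \eps\|x\|_1$, we have $\|x-y\|_1 < \eps 2^{k+1}$, so it suffices to control
$$ \PP\Big[\, \exists\, x,y\in\ZZ^d,\ 2^k \le \|x\|_1 < 2^{k+1},\ \|x-y\|_1 \le \eps 2^{k+1},\ \wT(x,y) > C_3\|x-y\|_1 \,\Big]. $$
Wait — the issue is that $\wT(x,y) > C_3\|x-y\|_1$ could hold simply because $\|x-y\|_1$ is small (even $0$), making the threshold $C_3\|x-y\|_1$ too small for Proposition~\ref{prop1} to apply. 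So I would instead demand $\wT(x,y) \le C_3 \eps 2^{k+1}$, i.e.\ compare to the scale rather than to $\|x-y\|_1$ itself; since $\eps 2^{k+1} \ge \eps \|x\|_1 \ge \|x-y\|_1$, proving $\wT(x,y) \le C_3\eps 2^{k+1} \le 2 C_3 \eps \|x\|_1$ gives the conclusion with $C_4 = 2C_3$.

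The main step is then a union bound. For fixed $x$ with $\|x\|_1 < 2^{k+1}$ and fixed $y$ with $\|x-y\|_1 \le \eps 2^{k+1}$: if $\eps 2^{k+1} \ge \|x-y\|_1$ then $C_3 \eps 2^{k+1} \ge C_3 \|x-y\|_1$, so by Proposition~\ref{prop1} (applied to the pair $(x,y)$, using translation invariance so that $\wT(x,y)$ has the same law as $\wT(0,y-x)$, and noting $l = C_3\eps 2^{k+1} \ge C_3\|x-y\|_1$ meets the hypothesis $l \ge C_3\|y-x\|_1$) we get $\PP[\wT(x,y) > C_3\eps 2^{k+1}] \le C_1 e^{-C_2 C_3 \eps 2^{k+1}}$. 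The number of relevant pairs $(x,y)$ with $\|x\|_1 < 2^{k+1}$ and $\|x-y\|_1 \le \eps 2^{k+1}$ is at most $(C' 2^{k+1})^d \cdot (C' \eps 2^{k+1})^d \le C'' 2^{2d(k+1)}$, a polynomial in $2^k$. Hence the probability of a ``bad'' pair at scale $k$ is at most $C'' 2^{2d(k+1)} C_1 e^{-C_2 C_3 \eps 2^{k+1}}$, which is summable over $k$. By Borel--Cantelli, a.s.\ there exists $k_0$ such that for all $k \ge k_0$ no bad pair occurs at scale $k$; taking $R = 2^{k_0}$ finishes the argument, since any $x$ with $\|x\|_1 \ge R$ lies in some scale $k \ge k_0$.

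The only mild obstacle is bookkeeping with the constants and making sure the threshold fed into Proposition~\ref{prop1} is genuinely $\ge C_3 \|y-x\|_1$ while still being $O(\eps \|x\|_1)$; choosing to compare against $C_3 \eps 2^{k+1}$ rather than against $C_3\|x-y\|_1$ resolves this cleanly, at the cost of a harmless factor $2$ in $C_4$. Everything else is a routine union bound plus Borel--Cantelli, and the constant $C_4$ depends only on $d$ and $F$ (through $C_1, C_2, C_3$), not on $\eps$, as required.
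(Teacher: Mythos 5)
Your plan is correct and is essentially the paper's proof: both compare $\wT(x,y)$ to a threshold of order $\eps\|x\|_1$ (so that Proposition~\ref{prop1} applies, since $\|x-y\|_1\leq\eps\|x\|_1$), take a union bound over the polynomially many pairs against the exponentially small tail, and conclude by Borel--Cantelli. The only cosmetic difference is that you organize the union bound by dyadic scales (getting $C_4=2C_3$) while the paper sums directly over all $x$ with $\|x\|_1\geq m$ (getting $C_4=C_3$).
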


\begin{proof}
Fix $\eps >0$. Let $C_3$ the constant given in Proposition \ref{prop1}. For all $m\in \NN^*$, we define the event $E_m$ by
$$ E_m \,=\, \left\{ \exists x,y\in \ZZ^d \,\textrm{ s.t. }\, \|x\|_1 \geq m \,,\, \|x-y\|_1 \leq \eps \|x\|_1  \,,\,  \wT(x,y)> C_3 \eps \|x\|_1\right\} \,. $$
Then we have
\begin{align}
\label{eqjust}
\PP[E_m] & \,\leq \, \sum_{x\in \ZZ^d, \|x\|_1 \geq m}\,\,\, \sum_{y \in \ZZ^d \cap B_1 (x, \eps \|x\|_1)} \PP [\wT (x,y) > C_3\eps \|x\|_1] \nonumber \\
& \,\leq \, \sum_{x\in \ZZ^d, \|x\|_1 \geq m}  \,\,\, \sum_{z\in \ZZ^d , \|z\|_1 \leq \eps \|x\|_1} \PP [\wT(0,z) > C_3 \eps \|x\|_1] \nonumber \\
& \,\leq \,  \sum_{x\in \ZZ^d, \|x\|_1 \geq m}\,\,\, \sum_{z\in \ZZ^d , \|z\|_1 \leq \eps \|x\|_1} C_1 e^{-C_2 C_3 \eps \|x\|_1} \\
& \,\leq \,  \sum_{x\in \ZZ^d, \|x\|_1 \geq m} C'_1 \|x\|_1^d  e^{-C_2 C_3 \eps \|x\|_1} \nonumber \\
& \,\leq\, \sum_{k\geq m} C_1' C_2' k^d e^{- C_2 C_3 \eps k} \,, \nonumber
\end{align}
where $C'_i$ are constants (depending on the dimension $d$, on $\eps$ and on $F$), and Inequality (\ref{eqjust}) comes from Proposition \ref{prop1}. Thus $\sum_m \PP[E_m] < \infty$, and a simple application of Borel Cantelli's Lemma concludes the proof.
\end{proof}

\begin{lem}
\label{lem2}
For all $\eps >0$, a.s. there exists $R>0$ such that for all $y\in \ZZ^d$
$$\|y\|_1 \,\geq \, R \,\Longrightarrow  \frac{ \left| \wT (0,y) - \wmu (y) \right|}{\|y\|_1} \,\leq\, \eps \,. $$
\end{lem}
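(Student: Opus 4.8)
The plan is the classical ``convergence in all directions'' argument: I upgrade the directional a.s.\ convergence of Theorem \ref{thmwT} to a uniform statement by combining a finite net of directions with the regularity estimate of Lemma \ref{lem1} to interpolate between a net direction and an arbitrary $y$.

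Fix $\eps>0$ and let $\delta>0$ be a small parameter, to be specified at the end in terms of $\eps$, $C_4$ and $\wmu(e_1)$. Since the set $\{v/\|v\|_1:v\in\ZZ^d\setminus\{0\}\}$ is dense in the $\|\cdot\|_1$-unit sphere $\SS$, I choose finitely many $v_1,\dots,v_N\in\ZZ^d\setminus\{0\}$ such that every $u\in\SS$ lies within $\|\cdot\|_1$-distance $\delta$ of some $v_i/\|v_i\|_1$, and I set $L=\max_i\|v_i\|_1$. By Theorem \ref{thmwT} together with the homogeneity \eqref{eqhom}, for each $i$ we have $\wT(0,nv_i)/n\to\wmu(v_i)$ a.s.\ as $n\to\infty$ along integers; intersecting over the finitely many $i$, a.s.\ there is $R_1$ with $|\wT(0,nv_i)-\wmu(nv_i)|\le\delta\, n\|v_i\|_1$ for all $i$ and all integers $n\ge R_1$. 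Applying Lemma \ref{lem1} with parameter $4\delta$, a.s.\ there is $R_2$ such that $\wT(a,b)\le 4C_4\delta\|a\|_1$ whenever $\|a\|_1\ge R_2$ and $\|a-b\|_1\le 4\delta\|a\|_1$. I work on the probability-one intersection of these two events.

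Given $y\in\ZZ^d$ with $\|y\|_1$ large, write $u=y/\|y\|_1$, pick $i$ with $\|u-v_i/\|v_i\|_1\|_1\le\delta$, and set $n=\lfloor\|y\|_1/\|v_i\|_1\rfloor$. Then $\|nv_i\|_1\in[\|y\|_1-L,\|y\|_1]$ and
$$\|y-nv_i\|_1\ \le\ \|y\|_1\bigl\|u-v_i/\|v_i\|_1\bigr\|_1+\bigl|\,\|y\|_1/\|v_i\|_1-n\,\bigr|\,\|v_i\|_1\ \le\ \delta\|y\|_1+L\,,$$
so that, once $\|y\|_1\ge L/\delta$ and $\delta\le 1/2$, one has $\|y-nv_i\|_1\le 2\delta\|y\|_1$ and $\|nv_i\|_1\ge(1-\delta)\|y\|_1$, hence $\|y-nv_i\|_1\le 4\delta\|nv_i\|_1$; for $\|y\|_1$ large enough we also get $n\ge R_1$ and $\|nv_i\|_1\ge R_2$. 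Then Lemma \ref{lem1} (applied with $a=nv_i,\,b=y$ and with $a=y,\,b=nv_i$) bounds both $\wT(nv_i,y)$ and $\wT(y,nv_i)$ by $4C_4\delta\|y\|_1$, so the triangle inequality \eqref{inegtrig} gives $|\wT(0,y)-\wT(0,nv_i)|\le 4C_4\delta\|y\|_1$; the net estimate gives $|\wT(0,nv_i)-\wmu(nv_i)|\le\delta\|y\|_1$ (using $n\|v_i\|_1\le\|y\|_1$); and the Lipschitz bound \eqref{eqcont} gives $|\wmu(nv_i)-\wmu(y)|\le\wmu(e_1)\|nv_i-y\|_1\le 2\wmu(e_1)\delta\|y\|_1$. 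Adding these three bounds,
$$|\wT(0,y)-\wmu(y)|\ \le\ \bigl(1+4C_4+2\wmu(e_1)\bigr)\,\delta\,\|y\|_1\,,$$
and choosing $\delta=\eps/(1+4C_4+2\wmu(e_1))$ (and shrinking below $1/2$ if needed) yields the claim; note that this also covers the degenerate case $\wmu\equiv 0$, where $\wmu(e_1)=0$.

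The only delicate points are purely bookkeeping: forcing $\|y\|_1$ to be large compared with $L$ so that $y$ can be replaced by the integer multiple $nv_i$ of a net direction, and obtaining the a.s.\ convergence uniformly over the $N$ net directions, which is immediate since $N$ is finite. No genuinely hard step is involved, which is why such shape-type statements are said to be ``classic''.
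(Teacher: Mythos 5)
Your proof is correct and uses exactly the same ingredients and the same three-term decomposition as the paper: approximate $y$ by an integer multiple $nv_i$ of a nearby lattice direction, control $|\wT(0,y)-\wT(0,nv_i)|$ via Lemma \ref{lem1}, the middle term via the directional convergence of Theorem \ref{thmwT}, and the last term via the Lipschitz bound \eqref{eqcont}. The only difference is structural: you run the compactness of the unit sphere through a finite $\delta$-net and argue directly, whereas the paper argues by contradiction, extracting a convergent subsequence of directions and approximating the limit direction by a single rational $z'$ — both are standard packagings of the same argument.
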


\begin{proof}
If Lemma \ref{lem2} is wrong, there exists $\eps >0$ and a sequence $(y_n) $ of vertices in $\ZZ^d$ such that $\lim_{n\rightarrow \infty} \|y_n\|_1 = + \infty$ and for all $n\in \NN$ we have $|\wT (0,y_n) - \wmu (y_n) | >\eps \|y_n\|_1$. Up to extraction, we can suppose that $(y_n / \|y_n\|_1)_{n\in \NN}$ converges to a point $z \in \RR^d$ such that $\|z\|_1 = 1$. Let $z'$ be an approximation of $z$ on $\ZZ^d$ in the following sense. Fix $ \hat \eps$ to be chosen later. There exists $z' \in \ZZ^d$ (with $\|z'\|_1$ large enough) such that $\left\| z - z'/\|z'\|_1 \right\|_1 \leq \hat \eps$
(see Figure \ref{fig1}).
\begin{figure}[h!]
\centering
\begin{picture}(0,0)%
\includegraphics{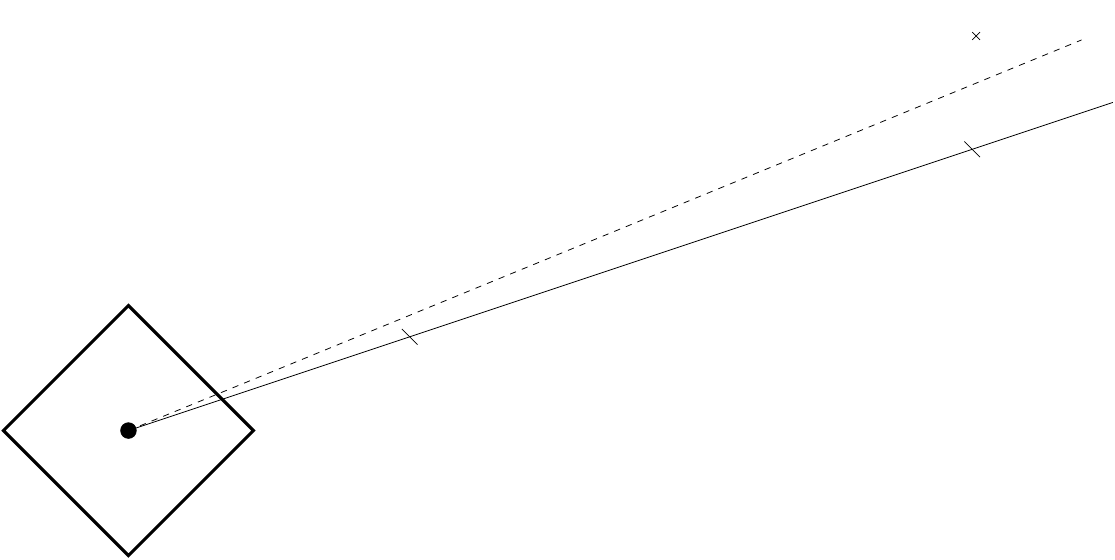}%
\end{picture}%
\setlength{\unitlength}{1973sp}%
\begingroup\makeatletter\ifx\SetFigFont\undefined%
\gdef\SetFigFont#1#2#3#4#5{%
  \reset@font\fontsize{#1}{#2pt}%
  \fontfamily{#3}\fontseries{#4}\fontshape{#5}%
  \selectfont}%
\fi\endgroup%
\begin{picture}(10695,5364)(1768,-7594)
\put(10951,-2461){\makebox(0,0)[rb]{\smash{{\SetFigFont{8}{9.6}{\rmdefault}{\mddefault}{\updefault}{\color[rgb]{0,0,0}$y_n$}
}}}}
\put(2926,-6736){\makebox(0,0)[rb]{\smash{{\SetFigFont{8}{9.6}{\rmdefault}{\mddefault}{\updefault}{\color[rgb]{0,0,0}$0$}
}}}}
\put(4201,-6211){\makebox(0,0)[lb]{\smash{{\SetFigFont{8}{9.6}{\rmdefault}{\mddefault}{\updefault}{\color[rgb]{0,0,0}$z'/\|z'\|_1$}
}}}}
\put(2551,-5461){\makebox(0,0)[rb]{\smash{{\SetFigFont{8}{9.6}{\rmdefault}{\mddefault}{\updefault}{\color[rgb]{0,0,0}$\partial B_1(0,1)$}
}}}}
\put(11551,-4186){\makebox(0,0)[b]{\smash{{\SetFigFont{8}{9.6}{\rmdefault}{\mddefault}{\updefault}{\color[rgb]{0,0,0}$y_n' = \left\lfloor \frac{\|y_n\|_1}{\|z'\|_1} \right\rfloor z'$}
}}}}
\put(6001,-5836){\makebox(0,0)[b]{\smash{{\SetFigFont{8}{9.6}{\rmdefault}{\mddefault}{\updefault}{\color[rgb]{0,0,0}$z'$}
}}}}
\put(3901,-5761){\makebox(0,0)[b]{\smash{{\SetFigFont{8}{9.6}{\rmdefault}{\mddefault}{\updefault}{\color[rgb]{0,0,0}$z$}
}}}}
\end{picture}
\caption{$z$ is the limit of $y_n / \|y_n\|_1$, $z'/\|z'\|_1$ is a rational approximation of $z$, and $y_n'= h_n z'$ is an approximation of $y_n$ along $\ZZ z'$.}
\label{fig1}
\end{figure}
For all $n\in \NN$, let $h_n =  \left\lfloor \|y_n\|_1 / \|z'\|_1 \right\rfloor $, and $y_n' = h_n z' $. Then
\begin{align*}
\left\| y_n - y_n' \right\|_1 & \,\leq\, \left\| y_n - \frac{\|y_n\|_1}{\|z'\|_1} z' \right\|_1 + \left|  \frac{\|y_n\|_1}{\|z'\|_1} -   h_n \right| \times \|z'\|_1\\
& \,\leq\, \left\| \frac{y_n}{\|y_n\|_1} - \frac{z'}{\|z'\|_1} \right\|_1 \times \|y_n\|_1 + \|z'\|_1\\
& \, \leq \, \left( \left\| \frac{y_n}{\|y_n\|_1} - z \right\|_1 + \left\|  \frac{z'}{\|z'\|_1} -z \right\|_1 \right)\times \|y_n\|_1 + \|z'\|_1 \\
& \,\leq\, 3\, \hat \eps \,\|y_n\|_1 \,,
\end{align*}
where the last inequality holds for all large $n$ since $\lim_{n\rightarrow \infty} \|y_n\|_1 = +\infty$ and $\lim_{n\rightarrow \infty} y_n / \|y_n \|_1 = z$. By Lemma \ref{lem1} we obtain that a.s., for all $n$ large enough, $\wT (y_n, y_n' ) \leq 3 \hat \eps  C_4 \|y_n\|_1$. We have the following upper bound :
\begin{align}
\label{eqjust2}
\frac{ \left| \wT (0,y_n) - \wmu (y_n) \right|}{\|y_n\|_1} & \,\leq \, \frac{ \left| \wT (0,y_n) - \wT(0,y'_n) \right|}{\|y_n\|_1} + \left|  \frac{\wT (0,y_n')}{\|y_n\|_1}  - \wmu \left( \frac{z'}{\|z'\|_1} \right) \right|+  \left|\wmu \left( \frac{z'}{\|z'\|_1} \right)- \wmu\left( \frac{y_n }{\| y_n\|_1}\right) \right| \nonumber \\
& \,\leq \, \frac{\wT (y_n, y_n')}{\| y_n\|_1} +  \left| \frac{h_n \|z'\|_1}{\|y_n\|_1} \times \frac{\wT ( 0,h_n z')}{h_n \|z'\|_1}  - \wmu \left( \frac{z'}{\|z'\|_1} \right) \right|  + \wmu(e_1) \left\|  \frac{z'}{\|z'\|_1} - \frac{y_n }{\| y_n\|_1} \right\|_1\,.
\end{align}
We just proved that a.s., for all $n$ large enough, the first term in (\ref{eqjust2}) is smaller than $3 \hat \eps C_4$. Moreover,
$$ \left\|  \frac{z'}{\|z'\|_1} - \frac{y_n }{\| y_n\|_1} \right\|_1  \,\leq\,  \left\| \frac{ z'}{\|z'\|_1 }- z \right\|_1 + \left\| z - \frac{y_n }{\| y_n\|_1 } \right\|_1  \,,$$
thus for $n$ large enough, the third term of (\ref{eqjust2}) is smaller than $2 \wmu(e_1) \hat \eps$. By Theorem \ref{thmwT}, a.s., for all $u\in \ZZ^d$ we have $\lim_{p\rightarrow \infty} \wT(0,pu)/p = \wmu (u)$. In particular, since $\lim_{n\rightarrow \infty} h_n =+\infty$, we have a.s., whatever $z'\in \ZZ^d$, the convergence 
$$\lim_{n\rightarrow \infty} \frac{\wT \left( 0,h_n z'\right)}{h_n \|z'\|_1} \,=\, \frac{\wmu(z')}{\|z'\|_1} \,=\,  \wmu \left( \frac{z'}{\|z'\|_1 } \right)\,.$$
Since $\lim_{n\rightarrow \infty} h_n / \|y_n\|_1 = 1$, we get that a.s., for all $n$ large enough, the second term in (\ref{eqjust2}) is smaller than $\hat\eps$. Choosing $\hat \eps$ small compared to $\eps$, we get that a.s., for all $n$ large enough,
$$ \frac{ \left| \wT (0,y_n) - \wmu (y_n) \right|}{\|y_n\|_1}  \,\leq \, (3  C_4 +  1+ 2 \wmu(e_1) ) \hat \eps \,\leq \, \eps \,.$$
This is a contradiction, thus Lemma \ref{lem2} is proved.
\end{proof}

\begin{proof}[Proof of Theorem \ref{thmwB}]
Part (i) of Theorem \ref{thmwB} is a straightforward consequence of Lemma \ref{lem2}. It remains to prove part (ii), under the hypotheses that $F([0,+\infty[)>p_c(d)$ and that $\wmu$ is a norm. The proof of Theorem \ref{thmwB} is completed by combining this result with Theorem \ref{thmpositivity} (which is proved in section \ref{secpos}).

Suppose that part (ii) is wrong because of the second inclusion : there exists $\eps >0$ and a sequence $t_n \in \RR^+$ such that $\lim_{n\rightarrow \infty} t_n = +\infty$ and for all $n$, $\wB_{t_n}/t_n$ is not included in $(1+\eps) \B_{\tilde \mu}$. Thus there exists a sequence of points $x_n \in \RR^d$ such that for all $n$, $t_n x_n \in \wB_{t_n}$, and $\wmu (x_n) > 1+\eps$. Since $t_n x_n \in \wB_{t_n}$, there exists $y_n \in \ZZ^d$ such that $\wT (0,y_n) \leq t_n$ and $t_nx_n \in \{ y_n +u \,|\, u \in [-1/2,1/2]^d \}$. We know that $\wmu(x_n) \leq \|x_n\|_1 \wmu (e_1)$, thus $\|x_n\|_1 \geq (1+\eps) / \wmu (e_1)  >0$, so $\lim_{n\rightarrow \infty} t_n =+\infty$ implies $\lim_{n\rightarrow \infty} \|y_n\|_1 = +\infty$. Moreover $|\wmu (y_n) - \wmu(t_n x_n)| \leq \wmu (e_1) \| y_n - t_n x_n \|_1 \leq C$, a constant depending only on the dimension and on $F$. This implies that for $n$ large enough
$$ \wmu (y_n) \,\geq\, t_n \wmu (x_n) - C \,\geq\, t_n (1+\eps) - C \,\geq\, t_n (1+\eps/2) \,,$$
thus
$$ \frac{\wmu (y_n) - \wT(0,y_n)}{\|y_n\|_1} \,\geq \,\left( 1 - \frac{1}{1+\eps/2} \right) \wmu \left( \frac{y_n}{\|y_n\|_1} \right) \,\geq \, \left( 1 - \frac{1}{1+\eps/2} \right)  \min_{z\in \RR^d\,,\,\|z\|_1 = 1} \wmu (z) \,>\,0 \,.$$
According to Lemma \ref{lem2}, a.s., this cannot happen for a sequence of vertices $y_n \in \ZZ^d$ such that $\lim_{n\rightarrow \infty}\|y_n\|_1 = +\infty$.

Suppose that part (ii) is wrong because of the first inclusion : there exists $\eps >0$ and a sequence $t_n \in \RR^+$ such that $\lim_{n\rightarrow \infty} t_n = +\infty$ and for all $n$, $(1-\eps) \B_{\tilde \mu}$ is not included in $\wB_{t_n}/t_n$. Then there exists a sequence of points $x_n\in \RR^d$ such that $\wmu (x_n) \leq 1-\eps$ and $t_n x_n \notin \wB_{t_n}$ for all $n$. Let $y_n \in \ZZ^d$ be such that $t_n x_n \in \{ y_n +u \,|\, u \in [-1/2,1/2]^d \}$: then $\wT (0,y_n) > t_n$. This implies that $\lim_{n\rightarrow \infty} \|y_n\|_1 = +\infty$, otherwise up to extraction we can suppose that $y_n$ converges to a point $z$, and since $y_n \in \ZZ^d$ it means that $y_n = z$ for all large $n$, thus $\wT(0,y_n) = \wT (0,z)$ which cannot be bigger than $t_n$ for all $n$ since $\wT(0,z)$ is finite a.s. Moreover $ \wmu (y_n) \leq  \wmu (t_n x_n) + C \leq t_n (1-\eps/2) $ for all large $n$, thus
$$ \frac{ \wT(0,y_n)- \wmu (y_n)}{\|y_n\|_1} \,\geq \,\left(  \frac{1}{1-\eps/2} -1 \right) \wmu \left( \frac{y_n}{\|y_n\|_1} \right) \,\geq \, \left(  \frac{1}{1-\eps/2} -1 \right)  \min_{z\in \RR^d\,,\,\|z\|_1 = 1} \wmu (z) \,>\,0 \,. $$
According to Lemma \ref{lem2}, a.s. this cannot happen for a sequence of vertices $y_n \in \ZZ^d$ such that $\lim_{n\rightarrow \infty}\|y_n\|_1 = +\infty$. This ends the proof of Theorem \ref{thmwB}, up to the question of the positivity of $\wmu$ (which is handled through Theorem \ref{thmpositivity}).
\end{proof}


\subsection{Weak shape theorem for $B_t^*$ and $B_t$}
\label{secB}

In this section, we admit Theorem \ref{thmpositivity}. We suppose that $F([0,+\infty[) >p_c(d)$ and $F(\{ 0 \}) <p_c(d)$. By Theorem \ref{thmpositivity}, we conclude that $\wmu(e_1) >0$, thus $\wmu$ is a norm, and  by Theorem \ref{thmwB} the strong shape theorem holds for $\wB_t$. We recall that $|A|$ denotes the cardinality of a set $A$, and $A \triangle B$ is the symmetric difference between two sets $A$ and $B$. For all $t\geq 0$, we define the set of vertices $B_t^{*,v}$ in the same way as $B_t^v$ :
$$ B_t^{*,v} \,=\, \{ z\in \ZZ^d \,|\, T^*(0,z) \leq t \} \,. $$
\begin{prop}
\label{propcard}
Suppose that $F([0,+\infty[) > p_c(d)$ and $F(\{0\}) <p_c(d)$. Then we have
$$  \lim_{t\rightarrow \infty} \frac{1}{t^d} \left| B_t^{*,v} \triangle ( t \B_{\tilde \mu} \cap \ZZ^d) \right| \,=\, 0 \quad \textrm{a.s.}$$
\end{prop}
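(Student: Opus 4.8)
The plan is to transport the strong shape theorem for $\wB_t$ (Theorem~\ref{thmwB}(i)) to the less regular times $T^*$, the bridge being an a.s.-finite shift together with an ergodic average. Since $F(\{0\})<p_c(d)$, Theorem~\ref{thmpositivity} (admitted in this section) says $\wmu$ is a norm, so $c:=\min_{\|u\|_1=1}\wmu(u)>0$, $c\|z\|_1\le\wmu(z)\le\wmu(e_1)\|z\|_1$, and $\B_{\tilde \mu}$ is a compact convex body. For $z\in\ZZ^d$ put $R_z:=T(\widetilde z,z^*)$: since $\widetilde z\in\C_M\subset\C_\infty$ and $z^*\in\C_\infty$ lie in the same cluster, $R_z<+\infty$ a.s., and the field $(R_z)_{z\in\ZZ^d}$ is stationary and ergodic, being a measurable factor of the i.i.d.\ field $(t(e))_e$. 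With $R_0':=T(0^*,\w0)<\infty$ a.s., the triangle inequality for the pseudo-metric $T$ gives
$$\big|T^*(0,z)-\wT(0,z)\big|\,\le\,R_0'+R_z\qquad(z\in\ZZ^d),$$
so $B_t^{*,v}$ differs from $\wB_t^v:=\{z:\wT(0,z)\le t\}$ only through the bounded shift $R_0'$ and the field $(R_z)$, while $\wB_t^v$ is pinned down by Theorem~\ref{thmwB}(i).

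\textbf{Confinement.} First I would check that a.s.\ there are $K,t_0$ with $B_t^{*,v}\subset\B_1(0,Kt)$ for all $t\ge t_0$. By continuity from above there is $\alpha>0$ with $F([0,\alpha])<p_c(d)$; the $\{0,1\}$-valued first-passage model with weights $\bar t(e)=\ind{\{t(e)>\alpha\}}$ then has a subcritical zero-set, hence by Kesten's positivity criterion together with the classical strong shape theorem (\cite{Kesten:StFlour}, valid since $\bar t\le1$) its metric $\bar T$ satisfies $\bar T(0,y)\ge c_0\|y\|_1$ a.s.\ for $\|y\|_1$ large, for some $c_0>0$. As $T(\gamma)\ge\alpha\,\bar T(\gamma)$ for every path $\gamma$, we get $T^*(0,z)=T(0^*,z^*)\ge\alpha\,\bar T(0^*,z^*)\ge\alpha\big(\bar T(0,z^*)-\bar T(0,0^*)\big)$, so $T^*(0,z)\le t$ forces $\|z^*\|_1=O(t)$; combined with $\|z-z^*\|_1\le C\log t$ uniformly over $\|z\|_1=O(t)$ (Theorem~\ref{thmfinite} and Borel--Cantelli), this yields the confinement.

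\textbf{Averaging.} Next I would establish that for every $\delta>0$,
$$\limsup_{t\to\infty}\frac1{t^d}\,\big|\{z\in\ZZ^d:\|z\|_1\le Kt,\ R_z>\delta t\}\big|\,=\,0\quad\text{a.s.}$$
This is where ergodicity of $(R_z)$ is used: for each fixed $\eta$ the multidimensional pointwise ergodic theorem along $\ell^1$-balls gives $t^{-d}\,|\{z:\|z\|_1\le Kt,\ R_z>\eta\}|\to\kappa\,\PP[R_0>\eta]$ a.s.\ for a constant $\kappa=\kappa(d,K)$; given $\varepsilon>0$, choose $\eta$ with $\PP[R_0>\eta]<\varepsilon$, and note that for $t$ large $\{R_z>\delta t\}\subset\{R_z>\eta\}$, whence the $\limsup$ above is $\le\kappa\varepsilon$ — and $\varepsilon$ is arbitrary. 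This device turns a (merely in-probability) lower large-deviation estimate for $T^*$ into a pure counting statement, and is what allows us to dispense with any moment control on the corrections $R_z$.

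\textbf{Conclusion.} Fix $\varepsilon>0$ and split $B_t^{*,v}\triangle(t\B_{\tilde \mu}\cap\ZZ^d)$ into three pieces. The shell $\{z:(1-\varepsilon)t<\wmu(z)\le(1+\varepsilon)t\}$ has at most $c_1\varepsilon t^d$ lattice points for $t$ large, $c_1=d\,\L^d(\B_{\tilde \mu})$. If $\wmu(z)\le(1-\varepsilon)t$, then for $\|z\|_1$ large Theorem~\ref{thmwB}(i) gives $\wT(0,z)\le\wmu(z)+\varepsilon'\|z\|_1\le(1-\varepsilon/2)t$ for a small $\varepsilon'$, so once $R_0'\le\varepsilon t/8$ we have $T^*(0,z)\le(1-\varepsilon/4)t+R_z$, hence $z\in B_t^{*,v}$ unless $R_z>\varepsilon t/4$: an $o(t^d)$ set by the averaging step (the finitely many $z$ with $\|z\|_1$ bounded being negligible). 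If $\wmu(z)>(1+\varepsilon)t$, then $\|z\|_1$ is large and, by confinement, $\le Kt$; Theorem~\ref{thmwB}(i) gives $\wT(0,z)\ge(1-\varepsilon'/c)\wmu(z)$, and the sandwich above again forces $R_z>\varepsilon t/4$, another $o(t^d)$ set. Altogether $\limsup_t t^{-d}\,|B_t^{*,v}\triangle(t\B_{\tilde \mu}\cap\ZZ^d)|\le c_1\varepsilon$ a.s., and letting $\varepsilon\downarrow0$ along a sequence finishes the proof. The step I expect to be the main obstacle is the confinement of $B_t^{*,v}$: without a moment assumption there is no deterministic lower bound of $T$ by a multiple of $\|\cdot\|_1$, and comparison with a Bernoulli first-passage model — where positivity of the time constant and the shape theorem are classical — seems to be the cleanest way to supply the missing control; the ergodic-averaging step is the second idea that makes the $T^*\to\wT$ transfer go through despite the absence of moments.
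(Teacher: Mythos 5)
Your proposal is correct and its architecture coincides with the paper's: the same three-way decomposition of $B_t^{*,v}\triangle(t\B_{\tilde\mu}\cap\ZZ^d)$, the same volumetric bound on the annulus, the same reduction of the two error sets to the event $\{T(x^*,\wx)>\eps t/4\}$ via Theorem \ref{thmwB}(i) and the triangle inequality for $T$, and the same ergodic-averaging device (the paper averages $\ind{\{T(x^*,\wx)>J\}}$ over boxes $[-Kt,Kt]^d$ and lets $J\to\infty$, which is your $\eta$). The one genuinely different ingredient is the confinement of $B_t^{*,v}$. The paper truncates the \emph{finite} passage times from above at a level $k$ and chooses $M\geq k$, so that $\C'_M=\C'_\infty=\C_\infty$, whence $\wx'=x^*$ and the exact inequality $T^*(0,x)\geq\wT'(0,x)$ holds; it then applies its own strong shape theorem to the auxiliary law (using the admitted Theorem \ref{thmpositivity}). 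You instead bound $T$ from below by $\alpha\,\bar T$ with $\bar t(e)=\ind{\{t(e)>\alpha\}}$ and $F([0,\alpha])<p_c(d)$, and invoke Kesten's classical positivity criterion and shape theorem for bounded passage times; this makes the confinement step independent of the paper's own machinery, at the price of the extra estimate $\|z-z^*\|_1\leq C\log\|z\|_1$ for all large $\|z\|_1$ (needed to pass from $\|z^*\|_1=O(t)$ to $\|z\|_1=O(t)$ by contraposition). For that estimate you should cite Theorem \ref{thmholes} (holes of $\C_\infty$), not Theorem \ref{thmfinite} (which controls finite clusters); with that correction both confinement arguments are valid and the rest of your proof goes through as written.
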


\begin{proof}[Proof of Proposition \ref{propcard}]
For this proof, we follow \cite{CoxDurrett} and \cite{Kesten:StFlour}. We recall that $B_t^{*,v}$ denotes the set $B_t^* \cap \ZZ^d$, i.e., $B_t^{*,v} = \{ x\in \ZZ^d \,|\, T^*(0,x) \leq t \}$. Let $A^c = \RR^d \smallsetminus A$. Fix $\eps >0$. We have the following inclusion :
\begin{align}
\label{eqinclusions}
B_t^{*,v}  \triangle (t \B_{\tilde \mu} \cap \ZZ^d) \,\subset \, &  \left[ \left( t(1+\eps) \B_{\tilde \mu}  \smallsetminus t (1-\eps) \B_{\tilde \mu} \right) \cap \ZZ^d \right] \bigcup \left[ \left( t(1-\eps) \B_{\tilde \mu} \cap \ZZ^d \right) \cap (B_t^{*,v})^c \right]  \nonumber \\
& \qquad \bigcup\left[ B_t ^{*,v} \cap (t(1+\eps) \B_{\tilde \mu})^c \right] \,.
\end{align}
We study the three terms appearing in the right hand side of (\ref{eqinclusions}) separately. 

Notice that for any $A\subset \RR^d$, we can bound the cardinality $|A\cap \ZZ^d|$ of $A\cap \ZZ^d$ by the volume $\L^d( \{ a+u \,|\, a\in A\,,\, u\in [-1/2, 1/2]^d \} )$. Thus for $t$ large enough we have
\begin{align}
\label{eqterm1}
\frac{1}{t^d} \bigg| \left( t(1+\eps) \B_{\tilde \mu}  \smallsetminus  t (1-\eps) \B_{\tilde \mu} \right) &  \cap \ZZ^d \bigg| \nonumber \\
& \, \leq\, \frac{1}{t^d} \L^d  \left( \left\{ x+u \,|\, x\in \left( t(1+\eps) \B_{\tilde \mu}  \smallsetminus t (1-\eps) \B_{\tilde \mu} \right) \,,\, u \in [-1/2, 1/2]^d \right\} \right)\nonumber \\
& \,\leq\,\frac{1}{t^d} \L^d \left(  t(1+2\eps) \B_{\tilde \mu}  \smallsetminus t (1-2\eps) \B_{\tilde \mu}  \right) \nonumber \\
& \,\leq\,  g(\eps) \L^d(\B_{\tilde \mu}) \,, 
\end{align}
where $g(\eps) = (1+2\eps)^d - (1-2\eps)^d$ goes to $0$ when $\eps$ goes to $0$.

Looking at the second term in the right hand side of (\ref{eqinclusions}), we notice that
$$ \left[ \left( t(1-\eps) \B_{\tilde \mu} \cap \ZZ^d \right) \cap (B_t^{*,v})^c \right]  \,=\, \{ x\in \ZZ^d \,|\,  \wmu(x) \leq t(1-\eps) \,,\, T^*(0,x)>t \} $$
Since $\wmu$ is a norm, the set $\B_{\tilde \mu}$ is compact, thus there exists a constant $K$ such that $\B_{\tilde \mu} \subset [-K,K]^d$. By Theorem \ref{thmwB} and the convexity of $\B_{\tilde \mu}$, we know that a.s., for all $t$ large enough, $t(1-\eps) \B_{\tilde \mu} \subset t (1-\eps/2) \B_{\tilde \mu}\subset \wB_{t(1-\eps/2)}$, thus
\begin{align*}
 \left[ \left( t(1-\eps) \B_{\tilde \mu} \cap \ZZ^d \right) \cap (B_t^{*,v})^c \right] & \, \subset \, \{ x\in \ZZ^d \cap [-Kt,Kt]^d \,|\,  \wT(0,x) \leq t(1-\eps/2) \,,\,  T^*(0,x)>t \} \\
& \,\subset \, \{ x\in \ZZ^d \cap [-Kt,Kt]^d \,|\,  T(0^*,\w0) + T(x^*,\wx) \geq t \eps/2  \}\,.
\end{align*}
We have $T(0^*,\w0) <\infty$ since $0^*$ and $\w0$ belong to $\C_\infty$, thus $T(0^*,\w0) \leq t \eps/4 $ for all large $t$. We obtain that a.s., for all $J>0$, for all $t$ large enough,
\begin{align*}
\frac{1}{t^d} \left| \left( t(1-\eps) \B_{\tilde \mu} \cap \ZZ^d \right) \cap (B_t^{*,v})^c \right| & \,\leq\, \frac{1}{t^d} \sum_{x\in [-Kt,Kt]^d \cap \ZZ^d} \ind{\{ T(x^*,\tilde x) > t\eps/4 \}}  \\
& \,\leq\, \frac{1}{t^d} \sum_{x\in [-Kt,Kt]^d \cap \ZZ^d} \ind{\{  T(x^*,\tilde x) > J \}} \,.
\end{align*}
Applying the ergodic theorem, we obtain that 
$$\lim_{t\rightarrow \infty} \frac{1}{|[-Kt,Kt]^d \cap \ZZ^d|} \sum_{x\in [-Kt,Kt]^d \cap \ZZ^d} \ind{\{  T(x^*,\tilde x) > J \}} \,=\, \psi_J \quad \textrm{a.s. and in }L^1\,,$$
where $\psi_J$ is a random variable satisfying 
$$\EE [\psi_J] = \PP [ T(0^*,\w0) > J] \underset{J\rightarrow \infty}{\longrightarrow} \PP[ T(0^*,\w0) = +\infty ] = 0 \,.$$
We conclude that 
\begin{equation}
\label{eqterm2}
\psi \,=\, \limsup_{t\rightarrow \infty }\frac{1}{t^d} \left| \left( t(1-\eps) \B_{\tilde \mu} \cap \ZZ^d \right) \cap (B_t^{*,v})^c \right|
\end{equation}
is a non negative random variable satisfying $\EE [\psi] \leq \EE[\psi_J]$, for all $J$, thus $\psi = 0$ a.s.

To study the third term of the right hand side of $(\ref{eqinclusions})$ in a similar way, we need an additional argument to prove that $B_t^{*,v} / t$ is included in a compact set. Let $k \in ]0, +\infty[$ be a positive and finite real number, and define the variables
$$  t'(e) \,=\, \left\{ \begin{array}{ll} t(e) & \textrm{ if } t(e) < k\\ k & \textrm{ if } k\leq t(e) < +\infty \\ +\infty & \textrm{ if } t(e) = +\infty \,. \end{array} \right. $$
Then $t'(e) \leq t(e)$ for all $e\in \EE^d$. Choose an $M\geq k$ in the definition of the regularized times $\wT'$ associated with the variables $(t'(e))$. Since $\C'_\infty$, the infinite cluster of the percolation $(\ind{\{ t'(e) < \infty \}}, e\in \EE^d)$, is equal both to $\C'_M$ - the infinite cluster of the percolation $(\ind{\{ t'(e) \leq M \}}, e\in \EE^d)$ - and to $\C_\infty$, we have for all $x\in \ZZ^d$ the equality $x^* = x^{* '} = \wx '$, where $x^{*'}$ (resp. $\wx '$) denotes the point of $\C'_\infty$ (resp. $\C'_M$) which is the closest to $x$ for the norm $\| \cdot \|_1$ (with a deterministic rule to break ties). We obtain 
$$ T^* (0,x) \,=\, T( 0^*, x^*) \,\geq \, T' (0^*, x^*) \,=\, T' (\w0', \wx') \,=\, \wT'(0,x) \,.$$
Let $\wmu'(x) = \lim_{n\rightarrow \infty} \wT'(0,nx)/n$, and extend $\wmu'$ to $\RR^d$. Since the passage times $t'(e)$ satisfy $\PP[t'(e) <\infty] = F([0,+\infty[)>p_c(d)$ and $\PP[t'(e) =0] = F(\{0\}) <p_c(d)$, then $\wmu'$ is a norm by Theorem \ref{thmpositivity}, thus by Theorem \ref{thmwB} a strong shape theorem holds for 
$$\wB_t'  \,=\, \{ x+u \,|\, x\in \ZZ^d \,,\, \wT'(0,x)\leq t \,,\, u\in [-1/2,1/2]^d \}\,.$$
The limit shape $\B_{\tilde \mu'}$ is compact, thus there exists $K' \in \RR^+$ such that $\B_{\tilde \mu'} \subset [-K', K']^d$. In particular,
$$ \textrm{a.s., for all } t\textrm{ large enough,} \quad B^*_t \,\subset \, \wB'_t \,\subset\, 2 t \B_{\tilde \mu'}\,\subset\,  [-2K't, 2K't]^d \,.$$
We now proceed as in the study of the second term of (\ref{eqinclusions}). A.s., for all $t$ large enough, we have
\begin{align*}
 \left[ B_t^{*,v}  \cap \left( t(1+\eps) \B_{\tilde \mu} \right)^c \right] & \, \subset \, \{ x\in \ZZ^d \cap [-2K't,2K't]^d \,|\, T^*(0,x) \leq t  \,,\,  \wmu(x) > t(1+\eps) \} \\
& \,\subset \, \{ x\in \ZZ^d \cap [-2K't,2K't]^d \,|\, T^*(0,x) \leq t\,,\, \wT(0,x) > t(1+\eps/2) \} \\
& \,\subset \, \{ x\in \ZZ^d \cap [-2K't,2K't]^d \,|\,  T(x^*,\wx) >t\eps/4 \} \,. \\
\end{align*}
A.s., for all $t$ large enough, we obtain
\begin{equation}
\label{eqterm3}
\frac{1}{t^d} \left| B_t^{*,v}  \cap \left( t(1+\eps) \B_{\tilde \mu} \right)^c \right|  \,\leq\, \frac{1}{t^d} \sum_{x\in [-2K't,2K't]^d \cap \ZZ^d} \ind{\{  T(x^*,\tilde x) > t\eps/4 \}} \,,
\end{equation}
that goes a.s. to $0$ as t goes to infinity as proved in (\ref{eqterm2}). Proposition \ref{propcard} is proved by combining (\ref{eqinclusions}), (\ref{eqterm1}), (\ref{eqterm2}) and (\ref{eqterm3}).
\end{proof}

\begin{proof}[Proof of Theorem \ref{thmB} (i)]
We define a discrete approximation of $\B_{\tilde \mu}$ with cubes of side length $1/t$ :
\begin{equation}
\label{eqBmut}
 \B_{\tilde \mu , t} \,=\, \{ y+u \,|\, y \in \B_{\tilde \mu} \cap \ZZ^d /t \,,\, u \in [-(2t)^{-1} , (2t)^{-1} [\} \,.
 \end{equation}
All the norms are equivalent in $\RR^d$, thus for any fixed $\eps >0$, for all $t $ large enough we have
$$\L^d \big( \B_{\tilde \mu} \triangle \B_{\tilde \mu , t}  \big)  \,\leq \, \L^d \left( (1+\eps) \B_{\tilde \mu} \smallsetminus (1-\eps) \B_{\tilde \mu} \right) \, \leq \, \hat g(\eps) \L^d (\B_{\tilde \mu}) \,,$$
where $\hat g(\eps) = (1+\eps)^d - (1-\eps)^d$ goes to $0$ when $\eps$ goes to $0$. Moreover we have
$$ \L^d \left(  \frac{B_t^*}{t} \triangle \B_{\tilde \mu , t} \right) \,=\, \frac{1}{t^d} \left| B_t^{*,v} \triangle ( t \B_{\tilde \mu} \cap \ZZ^d) \right|   $$
that goes to $0$ a.s. by Proposition \ref{propcard}. This concludes the proof of Theorem \ref{thmB} (i).
\end{proof}

\begin{proof}[Proof of Theorem \ref{thmB} (ii)]
Let 
$$m_t \,=\, \frac{1}{t^d} \sum_{x\in B_t^v} \delta_{x/t}\,.$$
We want to prove that on $\{0\in \C_\infty \}$, a.s., for every continuous bounded real function $f$ defined on $\RR^d$, we have
$$ \lim_{t\rightarrow \infty} \int_{\RR^d} f \, dm_t  \,=\, \lim_{t\rightarrow \infty} \frac{1}{t^d} \sum_{x\in B_t^v} f\left( \frac{x}{t} \right) \,=\, \theta \int_{\B_{\tilde \mu}} f \, d\L^d \,.$$
Let us give a short sketch of the proof. In a first step, we use Proposition \ref{propcard} to transform the sum of $f(x/t)$ over $x\in B_t^v$ in $\int_{\RR^d} f \, dm_t$ into a sum over $x\in \B_{\tilde \mu} \cap \C_\infty$. We divide $\B_{\tilde \mu}$ into small cubes $D(y,t_0)$ of center $y \in \B_{\tilde \mu} \cap \ZZ^d /t_0$ and side length $1/t_0$ (see Figure \ref{fig2}). Since $f$ is continuous, thus uniformly continuous on compact sets, for $t_0$ large enough $f$ is almost constant on each such cube. Thus considering a $t \geq t_0$, up to a small error, we can replace every $f(x/t)$ for $x/t \in D(y,t_0)$ by the same constant $f(y)$. This term $f(y)$ is multiplied in $\int_{\RR^d} f \, dm_t$ by the proportion of $x\in t D(y,t_0)$ that belong to $\C_\infty$. By an application of the ergodic theorem, this proportion goes to $\theta$ when $t$ goes to infinity for a fixed cube $D(y,t_0)$.
\begin{figure}[h!]
\centering
\begin{picture}(0,0)%
\includegraphics{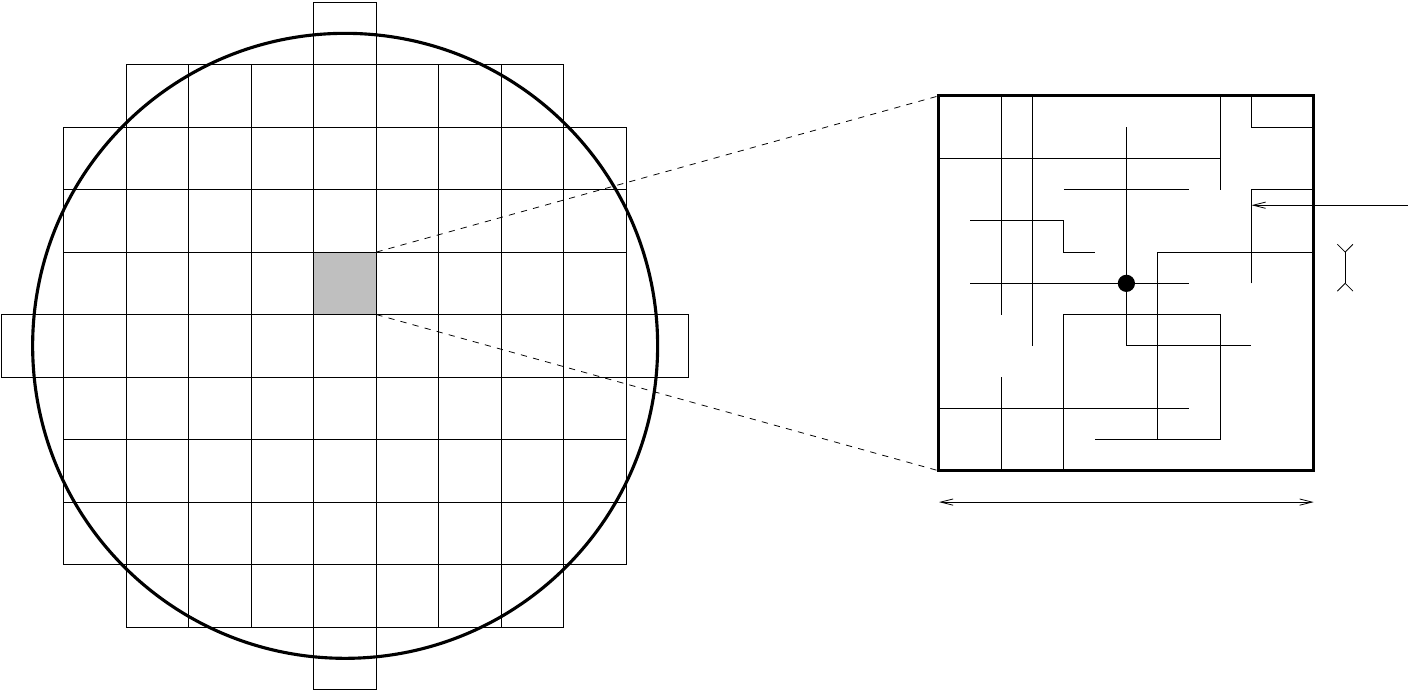}%
\end{picture}%
\setlength{\unitlength}{1973sp}%
\begingroup\makeatletter\ifx\SetFigFont\undefined%
\gdef\SetFigFont#1#2#3#4#5{%
  \reset@font\fontsize{#1}{#2pt}%
  \fontfamily{#3}\fontseries{#4}\fontshape{#5}%
  \selectfont}%
\fi\endgroup%
\begin{picture}(13677,6624)(1189,-8173)
\put(3451,-1786){\makebox(0,0)[rb]{\smash{{\SetFigFont{8}{9.6}{\rmdefault}{\mddefault}{\updefault}{\color[rgb]{0,0,0}$\B_{\wmu}$}%
}}}}
\put(12001,-6811){\makebox(0,0)[b]{\smash{{\SetFigFont{8}{9.6}{\rmdefault}{\mddefault}{\updefault}{\color[rgb]{0,0,0}$1/t_0$}%
}}}}
\put(11851,-4111){\makebox(0,0)[b]{\smash{{\SetFigFont{8}{9.6}{\rmdefault}{\mddefault}{\updefault}{\color[rgb]{0,0,0}$y$}%
}}}}
\put(10651,-2236){\makebox(0,0)[b]{\smash{{\SetFigFont{8}{9.6}{\rmdefault}{\mddefault}{\updefault}{\color[rgb]{0,0,0}$D(y,t_0)$}%
}}}}
\put(14851,-3661){\makebox(0,0)[lb]{\smash{{\SetFigFont{8}{9.6}{\rmdefault}{\mddefault}{\updefault}{\color[rgb]{0,0,0}$\C_\infty / t$}%
}}}}
\put(14251,-4261){\makebox(0,0)[lb]{\smash{{\SetFigFont{8}{9.6}{\rmdefault}{\mddefault}{\updefault}{\color[rgb]{0,0,0}1/t}%
}}}}
\end{picture}%
\caption{The set $\B_{\tilde \mu}$ and its discrete approximation by cubes of the form $D(y,t_0)$ for $y\in \B_{\tilde \mu} \cap \frac{\ZZ^d}{t_0}$.}
\label{fig2}
\end{figure}

We now start the proof. Let $f$ be a continuous bounded real function defined on $\RR^d$, and let us denote by $\|f\|_\infty$ the supremum of $f$. We have
\begin{align}
\label{eqrassemble}
\left| \int_{\RR^d} f \, dm_t -  \theta \int_{\B_{\tilde \mu}} f \, d\L^d \right| &  \,\leq\, \left| \int_{\RR^d} f \, dm_t - \frac{1}{t^d} \sum_{x\in t\B_{\tilde \mu} \cap \C_\infty} f \left( \frac{x}{t} \right) \right| \nonumber \\
& \qquad + \Bigg| \frac{1}{t^d} \sum_{x\in t\B_{\tilde \mu} \cap \C_\infty} f \left( \frac{x}{t} \right)   -  \frac{1}{t_0^d} \sum_{y \in \B_{\tilde \mu}\cap \frac{\ZZ^d}{t_0}}  f(y)  \left[ \frac{t_0^d}{t^d} \sum_{x\in tD(y,t_0) \cap \ZZ^d}   \ind{\{ x\in \C_\infty \}} \right] \Bigg| \nonumber \\
& \qquad + \Bigg|  \frac{1}{t_0^d} \sum_{y \in \B_{\tilde \mu}\cap \frac{\ZZ^d}{t_0}}  f(y)  \left[ \frac{t_0^d}{t^d} \sum_{x\in tD(y,t_0) \cap \ZZ^d}   \ind{\{ x\in \C_\infty \}} \right] -  \frac{\theta }{t_0^d} \sum_{y \in \B_{\tilde \mu} \cap \frac{\ZZ^d}{t_0}} f(y) \Bigg| \nonumber\\
&  \qquad + \theta \bigg|  \frac{1}{t_0^d} \sum_{y \in \B_{\tilde \mu} \cap \frac{\ZZ^d}{t_0}} f(y) - \int_{\B_{\tilde \mu}} f \, d\L^d \bigg| \,.
\end{align}
We study these four terms separately. We have
\begin{equation}
\label{eqetape1}
\left| \int_{\RR^d} f \, dm_t - \frac{1}{t^d} \sum_{x\in t\B_{\tilde \mu} \cap \C_\infty} f \left( \frac{x}{t} \right) \right|  \,\leq\, \| f \|_\infty \frac{\left| B_t^v \triangle \left( t\B_{\tilde \mu} \cap \C_\infty \right) \right| }{t^d} \,.
\end{equation}
By Proposition \ref{propcard}, we know that a.s., $\lim_{t\rightarrow \infty} | B_t^{*,v} \triangle (t\B_{\tilde \mu} \cap \ZZ^d) |/t^d =0$. On $\{ 0 \in \C_\infty\}$ we have $0 = 0^*$, and for any $x \in \ZZ^d$ either $x\in \C_\infty$, and in this case $T(0,x) = T^*(0,x)$, or $x\notin \C_\infty$, and then $T(0,x ) = +\infty$. Thus on $\{ 0 \in \C_\infty\}$ we have $B_t^v = B_t^{*,v} \cap \C_\infty$. We deduce from Proposition \ref{propcard} that on $\{ 0 \in \C_\infty\}$, a.s., the right hand side of (\ref{eqetape1}) goes to $0$ as $t$ goes to infinity. Fix $\eps>0$. Since f is continuous and $2\B_{\tilde \mu}$ is compact, there exists $t_1 >0$ such that for all $x,y\in 2\B_{\tilde \mu}$, $\|x-y\|_\infty \leq 1/t_1$  implies $|f(x) - f(y)| \leq \eps$. We now approximate $\B_{\tilde \mu}$ by a union of cubes (see Figure \ref{fig2}). For $y\in \RR^d$, we define 
$$ D (y, t) \,=\, \{ y+u \,|\, u \in [-(2t)^{-1},(2t)^{-1}[^d \}$$
the cube of center $y$ and side length $1/t$. We notice that
$$  \bigcup_{y \in \B_{\tilde \mu} \cap \frac{\ZZ^d }{t} } D (y, t) \,=\, \B_{\tilde \mu , t} $$
as defined in (\ref{eqBmut}), thus for all $t \geq t_2 $ large enough we have
$$\L^d \bigg( \B_{\tilde \mu} \triangle \bigg[ \bigcup_{y \in \B_{\tilde \mu} \cap \frac{\ZZ^d }{t} } D (y, t) \bigg] \bigg)  \,= \, \L^d \big( \B_{\tilde \mu} \triangle \B_{\tilde \mu , t}  \big)  \, \leq \, \hat g(\eps) \L^d (\B_{\tilde \mu}) \,,$$
where $\hat g(\eps) = (1+\eps)^d - (1-\eps)^d$ goes to $0$ when $\eps$ goes to $0$. Let $t_0 =  \max (t_1, t_2)$, then we have
\begin{align}
\label{eqetape2}
\bigg| \int_{\B_{\tilde \mu}} f \, d\L^d - \frac{1}{t_0^d} \sum_{y \in \B_{\tilde \mu} \cap \frac{\ZZ^d}{t_0}} f(y) \bigg|& \,\leq\, \|f\|_\infty  \L^d \bigg( \B_{\tilde \mu} \triangle \bigg[ \bigcup_{y \in \B_{\tilde \mu} \cap \frac{\ZZ^d }{t_0} } D (y, t_0) \bigg] \bigg) \nonumber\\
& \qquad\qquad  + \eps \L^d  \bigg( \bigcup_{y \in \B_{\tilde \mu} \cap \frac{\ZZ^d }{t_0} } D (y, t_0)  \bigg) \nonumber \\
& \,\leq \, \|f\|_\infty \hat g(\eps) \L^d(\B_{\tilde \mu}) + \eps (1+\eps)^d \L^d (\B_{\tilde \mu}) \,.
\end{align}
Similarly, for all $t\geq t_0$, 
\begin{align}
\label{eqetape3}
\Bigg| \frac{1}{t^d} \sum_{x\in t\B_{\tilde \mu} \cap \C_\infty} f \left( \frac{x}{t} \right)  & -  \frac{1}{t_0^d} \sum_{y \in \B_{\tilde \mu}\cap \frac{\ZZ^d}{t_0}}  f(y)  \left[ \frac{t_0^d}{t^d} \sum_{x\in tD(y,t_0) \cap \ZZ^d}   \ind{\{ x\in \C_\infty \}} \right] \Bigg| \nonumber\\
&  \,\leq \,  \frac{\|f\|_\infty }{t^d} \bigg| \left( t\B_{\tilde \mu} \cap \ZZ^d  \right) \triangle \bigg( \bigcup_{y\in \B_{\tilde \mu} \cap\frac {\ZZ^d}{t_0}} t D(y,t_0) \cap \ZZ^d \bigg) \bigg| \nonumber \\
& \qquad \qquad + \frac{\eps}{t^d}   \bigg| \bigcup_{y\in \B_{\tilde \mu} \cap\frac {\ZZ^d}{t_0}} t D(y,t_0) \cap \ZZ^d \bigg| \nonumber \\
& \,\leq \, \|f\|_\infty\L^d \left( (1+ \eps) \B_{\tilde \mu} \smallsetminus (1-  \eps) \B_{\tilde \mu} \right) + \eps \L^d\left(  (1+\eps) \B_{\tilde \mu} \right)\nonumber \\
& \, \leq \, \|f\|_\infty \hat g( \eps) \L^{d} (\B_{\tilde \mu}) + \eps (1+\eps)^d \L^d(\B_{\tilde \mu})\,.
\end{align}
We also have
\begin{align}
\label{eqetape4}
\Bigg|  \frac{\theta }{t_0^d} \sum_{y \in \B_{\tilde \mu} \cap \frac{\ZZ^d}{t_0}} f(y) & - \frac{1}{t_0^d} \sum_{y \in \B_{\tilde \mu}\cap \frac{\ZZ^d}{t_0}}  f(y)  \left[ \frac{t_0^d}{t^d} \sum_{x\in tD(y,t_0) \cap \ZZ^d}   \ind{\{ x\in \C_\infty \}} \right] \Bigg| \nonumber\\
& \,\leq \, \|f\|_\infty \frac{\left| \B_{\tilde \mu} \cap \frac{\ZZ^d}{t_0} \right|}{t_0^d} \max_{y \in \B_{\tilde \mu} \cap \frac{\ZZ^d}{t_0}} \left| \left[ \frac{t_0^d}{t^d} \sum_{x\in tD(y,t_0) \cap \ZZ^d}   \ind{\{ x\in \C_\infty \}} \right] - \theta  \right| \nonumber\\
& \,\leq\,\|f\|_\infty \L^d \left( (1+\eps) \B_{\tilde \mu} \right)\max_{y \in \B_{\tilde \mu} \cap \frac{\ZZ^d}{t_0}} \left| \left[ \frac{t_0^d}{t^d} \sum_{x\in tD(y,t_0) \cap \ZZ^d}   \ind{\{ x\in \C_\infty \}} \right] - \theta  \right|  \,.
\end{align}
Since $\B_{\tilde \mu} \cap \frac{\ZZ^d}{t_0^d}$ is finite, it only remains to prove that for all $y \in \B_{\tilde \mu} \cap \frac{\ZZ^d}{t_0^d}$, a.s., we have
\begin{equation}
\label{eqetape5}
\lim_{t\rightarrow \infty}  \frac{t_0^d}{t^d} \sum_{x\in tD(y,t_0) \cap \ZZ^d}   \ind{\{ x\in \C_\infty \}}  \,=\, \theta \,.
\end{equation}
Indeed, Theorem \ref{thmB} is a consequence of (\ref{eqrassemble}), (\ref{eqetape1}), (\ref{eqetape2}), (\ref{eqetape3}), (\ref{eqetape4}) and (\ref{eqetape5}). For all $D$ of the form $D = \prod_{i=1}^d [a_i, b_i[ $ with $0\leq a_i \leq b_i$, we define
$$ X_D \,=\, \frac{1}{\L^d(D)} \sum_{x\in D \cap \ZZ^d} \ind{\{ x\in \C_\infty \}} \,.$$
Then $X$ is a continuous super additive process in the terminology of Ackoglu and Krengel \cite{Ackoglu}. Fix $y \in \B_{\tilde \mu} \cap \ZZ^d / t_0^d$ such that $D(y,t_0) \subset (\RR^+)^d$. Then the family $(tD(y,t_0), t \in \QQ^+)$ is regular (see definition 2.6 in \cite{Ackoglu}), and since $X_{[0,1[^d}$ is bounded (hence integrable), we can apply the ergodic theorem 2.8 in \cite{Ackoglu} to state that
\begin{equation}
\label{eqetape6}
 \lim_{t\rightarrow \infty, t\in \QQ^+} X_{tD(y,t_0)} \,=\, X_\infty \quad \textrm{a.s.}  
 \end{equation}
By ergodicity, $X_\infty$ is constant a.s., and by the dominated convergence theorem, we obtain that $X_\infty = \EE[X_\infty] = \PP[0 \in \C_\infty] = \theta$ a.s. Notice that for $t\notin \QQ^+$, we have $\partial (tD(y,t_0)) \cap \ZZ^d = \emptyset$, thus there exist $t_1, t_2 \in \QQ^+$ such that $t-1<t_1<t<t_2<t+1$ and $X_{t_1D(y,t_0)} =X_{tD(y,t_0)}= X_{t_2D(y,t_0)} $. This implies that the limit in (\ref{eqetape6}) holds along any sequence of $t$ that goes to infinity (not necessarily rational). To prove (\ref{eqetape5}), it is now enough to notice that by symmetry, (\ref{eqetape6}) also holds if $D(y,t_0)$ is included in any other quadrant of $\RR^d$. If $D(y,t_0)$ intersects different quadrants, we can divide it into a finite number of pieces, each one of them intersecting only one quadrant, and treat these different pieces separately.
\end{proof}


\section{Positivity of the time constant}
\label{secpos}

\subsection{Non critical case}
\label{secpos1}

We start the study of the positivity of the time constant with two properties, the first dealing with the case $F(\{ 0 \}) <p_c(d)$, the second with the case $F(\{ 0 \}) >p_c(d)$. The critical case $F(\{ 0 \}) = p_c(d)$ is much more delicate to handle.
\begin{prop}
\label{propF(0)<pc}
Suppose that $F([0,+\infty[)>p_c(d)$. We have
$$ F(\{ 0 \}) < p_c(d) \quad \Longrightarrow \quad  \wmu (e_1) >0 \,. $$
\end{prop}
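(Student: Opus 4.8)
The plan is to reduce the statement to Kesten's classical theorem on the positivity of the time constant for \emph{finite} passage times (\cite{Kesten:StFlour}, Theorem~1.15), exploiting the truncation at the level $M$ that already appears in the definition of $\wT$. On the same probability space, introduce the truncated passage times $t_0(e) = \min(t(e),M)$; let $F_0$ be their common law and $T_0$ the associated passage-time pseudo-metric. Since $F([0,M])>p_c(d)>F(\{0\})$ forces $M>0$, the law $F_0$ is supported on $[0,M]$ (hence bounded) and satisfies $F_0(\{0\}) = F(\{0\}) < p_c(d)$. By the classical subadditive argument, legitimate here because $F_0$ has all moments finite, the limit $c_0 := \lim_{n\to\infty} T_0(0,ne_1)/n$ exists a.s.\ and in $L^1$, and by Kesten's theorem $c_0 = \mu(F_0)(e_1) > 0$.

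Next I would compare $\wT$ with $T_0$. Since $t_0(e)\le t(e)$ for every edge, every path has smaller $T_0$-length than $T$-length, so $T_0 \le T$ pointwise and therefore
$$ \wT(0,ne_1) \,=\, T(\w0,\widetilde{ne_1}) \,\ge\, T_0(\w0,\widetilde{ne_1}) \,. $$
Applying the triangle inequality for $T_0$ together with the elementary bound $T_0(a,b) \le M\|a-b\|_1$ (take a monotone lattice path, each of its edges costing at most $M$), one obtains
$$ T_0(\w0,\widetilde{ne_1}) \,\ge\, T_0(0,ne_1) - M\|\w0\|_1 - M\,\|\widetilde{ne_1}-ne_1\|_1 \,. $$
Here $\|\w0\|_1$ is an a.s.\ finite random variable (the $\|\cdot\|_1$-distance from $0$ to the a.s.\ non-empty cluster $\C_M$), while $\|\widetilde{ne_1}-ne_1\|_1$ is the $\|\cdot\|_1$-distance from $ne_1$ to $\C_M$, which by Theorem~\ref{thmholes} applied to the percolation $(\ind{\{t(e)\le M\}}, e\in\EE^d)$ and the Borel--Cantelli lemma is $o(n)$ a.s. Dividing by $n$ and letting $n\to\infty$ gives
$$ \liminf_{n\to\infty} \frac{\wT(0,ne_1)}{n} \,\ge\, \lim_{n\to\infty} \frac{T_0(0,ne_1)}{n} \,=\, c_0 \,>\,0\,. $$
By Theorem~\ref{thmwT} the limit $\wmu(e_1) = \lim_{n\to\infty}\wT(0,ne_1)/n$ exists a.s., hence equals this $\liminf$, so $\wmu(e_1)\ge c_0>0$, which is the claim.

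I do not expect a genuine obstacle: all the substance — positivity of the time constant for a bounded finite passage-time law whose atom at $0$ is below $p_c(d)$ — is Kesten's, and the only point requiring a little care is that the regularized endpoints $\w0,\widetilde{ne_1}$ differ from $0,ne_1$; this is harmless precisely because $M$ was chosen so that $\{t(e)\le M\}$ percolates, so the holes of $\C_M$ have exponential tails and the displacement is $o(n)$ a.s. If one preferred not to invoke Kesten's theorem as a black box, one could instead rerun his argument directly: choose $\delta>0$ small enough that $F([0,\delta])<p_c(d)$, so that $\{t(e)\le\delta\}$ is a subcritical percolation, and use the exponential decay of clusters there to show that any path realizing $\wT(0,ne_1)$ must use at least $\rho n$ edges with $t(e)>\delta$ for some $\rho>0$, whence $\wT(0,ne_1)\ge\delta\rho n$; but this reproves a known quantitative fact and the reduction above is shorter.
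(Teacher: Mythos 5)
Your proof is correct, but it takes a genuinely different route from the paper's. The paper also truncates ($t^k(e)=\min(t(e),k)$) and also ultimately rests on Kesten's positivity result for finite passage times (it quotes the quantitative Proposition 5.8 of \cite{Kesten:StFlour} rather than Theorem 1.15), but the transfer of positivity from the truncated model to $\wmu$ is done quite differently: the paper works with the unregularized times via the inequality $T(0,ne_1)\ge T^k(0,ne_1)$, and then argues by contradiction using the convergence in law of $T(0,ne_1)/n$ towards $\theta^2\delta_{\tilde\mu(e_1)}+(1-\theta^2)\delta_{+\infty}$ (Theorem \ref{thmT}), testing against a continuous function vanishing at $0$ and equal to $1$ on $[\wmu^k(e_1)/2,+\infty]$. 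You instead compare $\wT$ directly and pathwise with the truncated metric $T_0$, using the triangle inequality, the bound $T_0(a,b)\le M\|a-b\|_1$, and the control of $\|\widetilde{ne_1}-ne_1\|_1=o(n)$ from Theorem \ref{thmholes} and Borel--Cantelli. Your route buys a cleaner dependency structure (it does not invoke Theorem \ref{thmT} at all, only Theorem \ref{thmwT} for the existence of the limit, Theorem \ref{thmholes}, and Kesten's classical theorem) and yields the explicit quantitative conclusion $\wmu(e_1)\ge \mu(F_0)(e_1)$, where $F_0$ is the law truncated at $M$; the paper's route is slightly more roundabout but fits its global scheme, in which Theorem \ref{thmT} is already available and the comparison between $T$ and its truncations is handled in distribution rather than almost surely. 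All the steps you outline check out: $M>0$ so $F_0(\{0\})=F(\{0\})<p_c(d)$, $F_0$ is bounded so the classical subadditive theory applies, and the displacements $\|\w0\|_1$ and $\|\widetilde{ne_1}-ne_1\|_1$ are respectively a.s.\ finite and a.s.\ $o(n)$, so they vanish after division by $n$.
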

\begin{proof}
For this proof we can rely on known results concerning finite passage times. Intuitively it is all the more easier to prove that $\wmu (e_1)>0$ when we authorize infinite passage times. For $k \in \RR^{+*}$, we define $t^k(e) = \min (t(e),k)$ and $T^k(x,y)$ (resp. $\wmu^k(e_1)$) is the corresponding minimal passage time (resp. the time constant). Then for all $n\in \NN^*$, we have $T(0,ne_1) \geq T^k(0, ne_1)$. Moreover, choosing an $M\geq k$ in the definition of the regularized times $\wT^k$, we see by Theorem \ref{thmwT} that 
\begin{equation}
\label{eqcv1}
\lim_{n\rightarrow \infty} \frac{T^k (0,ne_1) }{n}\, =\, \lim_{n\rightarrow \infty}  \frac{\wT^k(0,ne_1)}{n} \,=\, \wmu^k (e_1) \,.
\end{equation}
By Theorem \ref{thmT}, we know that
\begin{equation}
\label{eqcv2}
\lim_{n\rightarrow \infty}  \frac{T(0,ne_1)}{n} \,\overset{\L}{=}\, \theta^2 \delta_{\tilde \mu(e_1)} + (1-\theta ^2) \delta_{+\infty}\,.
\end{equation} 
Suppose that $\wmu^k(e_1)>0$. Let $f$ be a continuous real bounded function defined on $\RR \cup \{ +\infty \}$, such that $f\geq 0$, $f(0)=0$ and $f=1$ on $[\wmu^k(e_1)/2, +\infty]$. Then we have
\begin{equation}
\label{eqcv3}
\EE \left[ f \left( \frac{T(0,ne_1)}{n} \right) \right]  \,\geq\, \PP \left[ \frac{T(0,ne_1)}{n} \geq \frac{\wmu^k(e_1)}{2} \right]  \,\geq\, \PP \left[ \frac{T^k (0,ne_1)}{n} \geq \frac{\wmu^k(e_1)}{2} \right]\,.
\end{equation}
By (\ref{eqcv1}), the right hand side of (\ref{eqcv3}) goes to $1$ as $n$ goes to infinity. If $\wmu(e_1)=0$, by (\ref{eqcv2}) we would have $\lim_{n\rightarrow \infty} \EE[f(T(0,ne_1)/n)] = 1-\theta^2 < 1$, thus (\ref{eqcv3}) implies that $\wmu(e_1)>0$. It remains to prove that $\wmu^k(e_1) >0$. Notice that the times $(t^k(e), e \in \EE^d)$ are finite, and if we denote by $F^k$ their common law, we have $F^k(\{0\}) = F(\{0\}) <p_c(d)$. Proposition 5.8 in \cite{Kesten:StFlour} states that there exist constants $0<C,D,E<\infty$, depending on $d$ and $F^k$, such that
\begin{equation}
\label{eqcv4}
\PP \left[ \textrm{there exists a self avoiding path } \gamma \textrm{ starting at }0 \textrm{ s.t. } |\gamma| \geq n \textrm{ and } T^k(\gamma) <Cn \right] \,\leq\, De^{-En}\,.
\end{equation}
Equation (\ref{eqcv4}) implies that $\PP[T^k(0,ne_1) < Cn] \leq De^{-En}$, thus $\wmu(e_1) \geq C >0$.
\end{proof}

\begin{prop}
\label{propF(0)>pc}
Suppose that $F([0,+\infty[)>p_c(d)$. We have
$$ F(\{ 0 \}) > p_c(d) \quad \Longrightarrow \quad  \wmu (e_1) =0 \,. $$
\end{prop}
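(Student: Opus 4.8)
The plan is to use the infinite cluster of edges with zero passage time. Since $F(\{0\})>p_c(d)$, the Bernoulli percolation $(\ind{\{ t(e)=0\}}, e\in\EE^d)$ is supercritical, so it admits an almost surely unique infinite cluster $\C_0$, and $\C_0\subset\C_\infty$ since an edge of zero passage time has finite passage time. To each $x\in\ZZ^d$ I associate the point $x_0\in\C_0$ closest to $x$ for $\|\cdot\|_1$, with a deterministic tie-breaking rule. Because $\C_0$ is connected through edges of passage time $0$, for all $x,y\in\ZZ^d$ there is a path from $x_0$ to $y_0$ of total passage time $0$, hence $T(x_0,y_0)=0$.

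First I would apply the triangle inequality satisfied by $T$: for every $n\in\NN$,
$$ \wT(0,ne_1) \,=\, T(\w0,\widetilde{ne_1}) \,\leq\, T(\w0,0_0) + T(0_0,(ne_1)_0) + T((ne_1)_0,\widetilde{ne_1}) \,=\, T(\w0,0_0) + T((ne_1)_0,\widetilde{ne_1}) \,. $$
Dividing by $n$, the first term tends to $0$ almost surely, since $T(\w0,0_0)$ is a fixed, almost surely finite random variable (both $\w0\in\C_M\subset\C_\infty$ and $0_0\in\C_0\subset\C_\infty$ belong to the connected infinite cluster $\C_\infty$). For the second term, translation invariance of the model shows that $T((ne_1)_0,\widetilde{ne_1})$ has the same law as $T(0_0,\w0)$, which is almost surely finite, so for every $\eps>0$,
$$ \PP\left[ \frac{T((ne_1)_0,\widetilde{ne_1})}{n} > \eps \right] \,=\, \PP\big[ T(0_0,\w0) > \eps n \big] \,\underset{n\rightarrow\infty}{\longrightarrow}\, \PP\big[ T(0_0,\w0) = +\infty \big] \,=\, 0 \,. $$
Consequently $\wT(0,ne_1)/n\to 0$ in probability.

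Finally, Theorem \ref{thmwT} gives $\wT(0,ne_1)/n\to\wmu(e_1)$ almost surely, hence in probability; by uniqueness of the limit in probability this forces $\wmu(e_1)=0$, which is the desired conclusion. There is no serious obstacle here; the only point deserving attention is that, without a moment assumption on $F$, the ``boundary'' term $T((ne_1)_0,\widetilde{ne_1})$ need not be integrable, so the comparison cannot be carried out in $L^1$ or almost surely in an elementary way --- but this is immaterial, since Theorem \ref{thmwT} already guarantees that the almost sure limit $\wmu(e_1)$ exists and only its value is in question. (If an almost sure statement were wanted one could instead dominate $T((ne_1)_0,\widetilde{ne_1})$ by $M\,D_M((ne_1)_0,\widetilde{ne_1})$ and combine Theorems \ref{thmAP} and \ref{thmholes} with the Borel--Cantelli lemma, but this is not needed.)
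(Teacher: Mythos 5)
Your proof is correct, but it follows a genuinely different route from the paper. The paper's argument stays at the level of the original times $T$: it invokes the convergence in law of $T(0,ne_1)/n$ towards $\theta^2\delta_{\wmu(e_1)}+(1-\theta^2)\delta_{+\infty}$ (Theorem \ref{thmT}), applies the FKG inequality to get $\PP[T(0,ne_1)=0]\geq \PP[0\in\C_0]^2>0$ uniformly in $n$, and derives a contradiction with a test function that is positive at $0$ but vanishes at $\wmu(e_1)$ and at $+\infty$. You instead work directly with the regularized times: you sandwich $\wT(0,ne_1)$ between $0$ and the two boundary terms $T(\w0,0_0)+T((ne_1)_0,\widetilde{ne_1})$, using that the supercritical zero-time cluster $\C_0$ is connected by zero-time paths and that $\C_0\subset\C_\infty$, $\C_M\subset\C_\infty$ forces these boundary terms to be a.s. finite; translation invariance then gives $\wT(0,ne_1)/n\to 0$ in probability, and Theorem \ref{thmwT} identifies the limit as $\wmu(e_1)$. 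Your route is more elementary in that it bypasses both Theorem \ref{thmT} and the FKG inequality, and it is structurally the same comparison trick the paper uses in Proposition \ref{propT*wT}; the only points worth making explicit are that the uniqueness of $\C_0$ and of $\C_\infty$ (which guarantees $\C_0\subset\C_\infty$ and the finiteness of $T(\w0,0_0)$) is being used, and that the deterministic tie-breaking rule defining $x_0$ and $\wx$ must be chosen translation covariant for the identity in law $T((ne_1)_0,\widetilde{ne_1})\overset{d}{=}T(0_0,\w0)$ to hold --- both are standard and harmless. The paper's approach, by contrast, buys a proof that leans entirely on already-established machinery and never needs to introduce the auxiliary points $x_0$.
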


\begin{proof}
Suppose that $F(\{ 0 \}) > p_c(d)$. We denote by $\C_0$ the a.s. unique infinite cluster of the percolation $(\ind{\{ t(e)=0 \}}, e\in \EE^d)$, that is super-critical. By Theorem \ref{thmT}, we know that 
\begin{equation}
\label{eqcv5}
\lim_{n\rightarrow \infty}  \frac{T(0,ne_1)}{n} \,=\, \theta^2 \delta_{\tilde \mu(e_1)} + (1-\theta ^2) \delta_{+\infty}\,.
\end{equation} 
Suppose that $\wmu (e_1)>0$. Then there exists a continuous real bounded function $f$ defined on $\RR \cup \{ +\infty \}$ such that $f\geq 0$, $f(\wmu(e_1)) = f(+\infty) =0$ and $f(0) >0$. Applying the FKG inequality we have
$$ \EE \left[ f\left( \frac{T(0,ne_1)}{n} \right) \right]  \,\geq\, f(0) \,\PP[ T(0,ne_1)=0] \,\geq \, f(0)\, \PP [0\in \C_0 \,,\, ne_1 \in \C_0] \,\geq\, f(0)\, \PP[0\in \C_0]^2 \,>\, 0\,.$$
By (\ref{eqcv5}), we also know that $\lim_{n\rightarrow \infty} \EE[ f(T(0,ne_1)/n)] =0$, which is a contradiction, thus $\wmu(e_1) =0$.
\end{proof}


\subsection{Lower large deviations}
\label{secLD}

Our goal is now to handle the critical case. We define the hyperplane $H_{n} = \{ z \in \RR^d \,|\, z_1=n  \}$, and the time 
$$ T(0,H_n) \,=\, \inf \{ T(0,x) \,|\, x\in \ZZ^d \cap H_n  \}\,. $$
This is the so called {\em point-to-line} passage time (we should say point-to-hyperplane, but the term point-to-line has been kept from dimension $2$). The proof of Theorem \ref{thmpositivity} relies on the following property.
\begin{prop}
\label{propGD}
Suppose that there exists $M\in ]0,+\infty[$ such that $F([0,+\infty[) = F([0,M])>p_c(d)$. If $\wmu(e_1)>0$, then for all $\eps >0 $, there exist positive constants $C_5,C_6$ such that
$$ \PP[0\in \C_M \,,\, T(0,H_n) < (\wmu(e_1) -\eps) n ]  \,\leq\, C_5 e^{-C_6 n} \,.$$
\end{prop}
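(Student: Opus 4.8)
The plan is to prove the lower large deviation estimate for the point-to-hyperplane time $T(0,H_n)$ by a standard renormalization/block argument, following the classical strategy (as in Kesten's St Flour notes, Proposition 5.8, or Grimmett--Kesten) but adapted to the possibly-infinite setting through the infinite cluster $\C_M$. The key idea is the following: if $0 \in \C_M$ and $T(0,H_n) < (\wmu(e_1)-\eps)n$, then there is a path $\gamma$ from $0$ to $H_n$ with $T(\gamma) < (\wmu(e_1)-\eps)n$. Such a path has length at least $n$, and we want to say it must be ``atypically fast'', hence rare. The difficulty compared to the finite-mean setting is that $\gamma$ might use edges of infinite (or large) passage time; but the event also requires $T(\gamma) < \infty$, so no infinite edge is used, and in fact $T(\gamma) < (\wmu(e_1)-\eps)n$ forces the total contribution of edges of passage time $> M$ along $\gamma$ to be small. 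I would make this quantitative via a renormalization: tile $\ZZ^d$ into boxes of side $L$, declare a box \emph{good} if it is ``$M$-crossable'' in a strong enough sense (the $M$-open edges inside form a well-connected cluster spanning the box, the chemical distance inside is comparable to the side length, etc.) so that the process of good boxes dominates a highly supercritical site percolation. This uses Theorem \ref{thmAP} (Antal--Pisztora), Theorem \ref{thmfinite}, Theorem \ref{thmholes}, and the Grimmett--Marstrand slab technology in exactly the way they enter the definition of $\wT$.

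First I would fix $\eps>0$ and introduce an auxiliary truncation: set $t^{(k)}(e) = \min(t(e), k)$ for $k \geq M$ large, with corresponding time constant $\wmu^{(k)}(e_1) \uparrow \wmu(e_1)$ as $k\to\infty$ (the monotone convergence of truncated time constants to $\wmu(e_1)$, which follows from Theorem \ref{thmT} as in the proof of Proposition \ref{propF(0)<pc}). Choose $k$ so large that $\wmu^{(k)}(e_1) > \wmu(e_1) - \eps/2$. Then on the event in question, $T(0,H_n) < (\wmu(e_1)-\eps)n < (\wmu^{(k)}(e_1) - \eps/2)n$, so it suffices to prove the estimate with $\wmu(e_1)$ replaced by $\wmu^{(k)}(e_1)$ for the \emph{finite} passage times $t^{(k)}$ (bounded by $k$), where the event $0\in\C_M$ is harmless since it only shrinks probability. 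But for bounded passage times with $\wmu^{(k)}(e_1)>0$, the point-to-line lower large deviation estimate is classical: it is precisely the kind of statement proved via equation (\ref{eqcv4}) in the proof of Proposition \ref{propF(0)<pc} (Proposition 5.8 in \cite{Kesten:StFlour}), which controls the existence of a long self-avoiding path of abnormally small passage time. Indeed (\ref{eqcv4}) gives that with probability at least $1 - De^{-En}$ there is no self-avoiding path $\gamma$ starting at $0$ with $|\gamma|\geq n$ and $T^{(k)}(\gamma) < Cn$; combined with $\wmu^{(k)}(e_1) \geq C$ and the fact that any path to $H_n$ has length $\geq n$, this yields $\PP[T^{(k)}(0,H_n) < (\wmu^{(k)}(e_1)-\eps/2)n] \leq C_5 e^{-C_6 n}$ after possibly adjusting constants (using a linear-programming/convexity argument to pass from ``no fast long path'' to ``no fast path reaching $H_n$'', i.e. splitting a long fast path into segments and averaging).

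The main obstacle I anticipate is the last implication: deducing the point-to-hyperplane bound from the ``no long abnormally-fast self-avoiding path'' statement (\ref{eqcv4}). A path to $H_n$ has at least $n$ edges, but it need not be self-avoiding after loop-erasure with controlled length, and its length could be much larger than $n$, so one must argue that a path of length $\ell \geq n$ with $T(\gamma) < (\wmu^{(k)}(e_1)-\eps/2)n \leq (\wmu^{(k)}(e_1)-\eps/2)\ell$ contains a sub-path that is ``abnormally fast'' on its own scale, then sum the bound in (\ref{eqcv4}) over all relevant lengths $\ell \geq n$ (the geometric decay $De^{-E\ell}$ is summable and dominated by $C_6' e^{-C_6 n}$). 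This is the standard chaining/subadditivity step and is routine but needs care with the constants. Alternatively, one can argue directly with a block construction as sketched above and invoke a Peierls-type argument on the renormalized lattice, which avoids reproving (\ref{eqcv4}) but requires setting up the good-box definition carefully so that a fast crossing forces many bad boxes. Either way, the heart of the matter — that bounded positive-time-constant FPP has exponentially rare abnormally-fast long paths — is already available in \cite{Kesten:StFlour}, so the real work here is the reduction by truncation plus the geometric bookkeeping to reach $H_n$.
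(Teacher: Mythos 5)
The truncation reduction at the heart of your argument does not work, and it fails precisely in the case where Proposition \ref{propGD} is actually needed. You assert that the truncated time constants satisfy $\wmu^{(k)}(e_1)\uparrow\wmu(e_1)$, citing the proof of Proposition \ref{propF(0)<pc}; but that proof only yields the one-sided inequality $\wmu^{(k)}(e_1)\le\wmu(e_1)$ (which is immediate from $t^{(k)}\le t$), and the convergence of the truncated constants to $\wmu(e_1)$ is a genuine continuity statement that is nowhere established in the paper --- the introduction explicitly lists the continuity of the time constant as a question \emph{raised} by this work, and the known proofs of such continuity (in the spirit of Cox and Kesten) themselves rely on lower large deviation estimates of exactly the type you are trying to prove, so the reduction risks circularity. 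Worse, the claimed convergence is simply false in the critical case $F(\{0\})=p_c(d)$: the truncated law $F^{(k)}$ is a law on $[0,k]$ with $F^{(k)}(\{0\})=p_c(d)$, so Kesten's theorem for finite passage times gives $\wmu^{(k)}(e_1)=0$ for every $k$, which can never exceed $\wmu(e_1)-\eps/2>0$; for the same reason equation (\ref{eqcv4}) (Kesten's Proposition 5.8) is unavailable for $F^{(k)}$, since it requires $F^{(k)}(\{0\})<p_c(d)$. The critical case is exactly the one in which the paper invokes Proposition \ref{propGD} (in the proof of Proposition \ref{propF(0)=pc}, where $F(\{0\})=p_c(d)$ and one argues by contradiction from the assumption $\wmu(e_1)>0$), so an argument that only covers $F(\{0\})<p_c(d)$ misses the point of the statement.

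A correct proof has to work directly with the possibly infinite passage times. The paper cuts a hypothetical fast path from $0$ to $H_n$ into edge-disjoint plaquette-to-face crossings of mesoscopic boxes, all constrained to lie in $\C_\infty$, applies the inequality for disjoint occurrence of \emph{decreasing} events (Theorem (4.8) in \cite{Kesten:StFlour}) to compare with i.i.d.\ block times $Y(K,N)$ defined with the constraint $\gamma\subset\C_\infty$, and then shows that $\PP[Y(K,N)<K(\wmu(e_1)-\eps/2)]$ is small by comparing $Y(K,N)$ with the regularized point-to-line time $\wT(0,H_K)$; this comparison is where the hypothesis $F([0,M])=F([0,+\infty[)$ (hence $\C_M=\C_\infty$) is used, through Proposition \ref{propbn} and Lemma \ref{lemDM}. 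Your closing remark about a direct block/Peierls construction points vaguely in this direction, but it does not identify these ingredients (in particular the restriction of the block crossings to $\C_\infty$ and the input of Proposition \ref{propbn}), and the weight of your proposal rests on the truncation step, which is the step that breaks.
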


\begin{rem}
In proposition \ref{propGD}, we restrict our study to laws $F$ such that $F(]M,+\infty [) =0$. In this case, the percolations $(\ind{\{ t(e) \leq M \}}, e\in \EE^d)$ and $(\ind{\{ t(e) <\infty \}}, e\in \EE^d)$ are the same, and $\C_M = \C_\infty$. This property makes our study more tractable.
\end{rem}

Before proving Proposition \ref{propGD}, we need two preliminary results. We define the regularized point-to-line passage time 
$$\wT (0, H_n) \,=\, \inf \{ T(\w0, \wx) \,|\, x \in H_n \} \,. $$
\begin{prop}
\label{propbn}
Suppose that $F([0,+\infty[) > p_c(d)$. Then
$$ \lim_{n\rightarrow \infty} \frac{\wT(0,H_n)}{n} \,=\, \wmu (e_1) \quad \textrm{a.s.}  $$
\end{prop}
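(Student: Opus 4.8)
The plan is to establish the convergence $\wT(0,H_n)/n \to \wmu(e_1)$ via a two-sided bound, reusing the machinery already available for the point-to-point times $\wT$. For the upper bound, note that $\wx$ for $x = ne_1$ lies in $H_n$ provided no tie-breaking pushes it off the hyperplane — more robustly, observe that $\wT(0,H_n) \le \wT(0,ne_1) = T(\w0,\widetilde{ne_1})$, so Theorem \ref{thmwT} gives immediately
\[
\limsup_{n\to\infty} \frac{\wT(0,H_n)}{n} \,\le\, \wmu(e_1) \quad \text{a.s.}
\]
The real work is the matching lower bound: I must show that a.s., for $n$ large, every vertex $x \in H_n$ satisfies $T(\w0,\wx) \ge (\wmu(e_1)-\eps)n$.

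The key step is a restriction to a bounded window. If $\wmu(e_1) > 0$, then $\wmu$ is a norm (by the last clause of Theorem \ref{thmwT}), so for $x \in H_n$ with $\|x\|_1$ very large, $\wmu(\wx)$ is already large; more precisely, using that $\wmu(x) \ge c\|x\|_1$ for some $c>0$ and that $\|x - \wx\|_1$ is controlled, we can afford to ignore all $x \in H_n$ with $\|x\|_1 > Kn$ for a suitable constant $K = K(\eps)$. For those far-away points one argues directly: by Proposition \ref{prop1}, $\PP[\wT(0,x) \le (\wmu(e_1)-\eps)n]$ is super-exponentially small in $\|x\|_1$ once $\|x\|_1$ is large compared to $n$, and summing over $x \in H_n$ with $\|x\|_1 > Kn$ still gives a summable (in $n$) bound, so Borel--Cantelli eliminates this region. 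It then remains to handle the finitely many (order $n^{d-1}$) vertices $x \in H_n \cap \B_1(0,Kn)$. For each such $x$, the uniform convergence statement Theorem \ref{thmwB}(i) gives that a.s., for $\|x\|_1 \ge n_0$,
\[
\wT(0,x) \,\ge\, \wmu(x) - \eps' \|x\|_1 \,\ge\, \wmu(x) - \eps' K n.
\]
Since $x \in H_n$ means $x_1 = n$, convexity and homogeneity of $\wmu$ give $\wmu(x) \ge \wmu((n,0,\dots,0)) = n\,\wmu(e_1)$ (the projection onto the first coordinate does not increase $\wmu$, using invariance under reflection of the other axes as in the proof of Theorem \ref{thmwT}). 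Hence $\wT(0,x) \ge n\wmu(e_1) - \eps' K n \ge (\wmu(e_1)-\eps)n$ for $\eps'$ chosen small enough, uniformly over this bounded window; I also need to absorb the small vertices $\|x\|_1 < n_0$, but for $n$ large these do not lie in $H_n$ at all. Taking the infimum over $x \in H_n$ gives $\liminf_n \wT(0,H_n)/n \ge \wmu(e_1)-\eps$ a.s., and letting $\eps \downarrow 0$ along a countable sequence finishes.

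The main obstacle I anticipate is the far-field region $\|x\|_1 > Kn$: Theorem \ref{thmwB}(i) controls $\wT(0,x)/\|x\|_1$ uniformly but only gives $\wT(0,x) \gtrsim \wmu(x) \gtrsim c\|x\|_1$, which already beats $(\wmu(e_1)-\eps)n$ as soon as $K > (\wmu(e_1)-\eps)/c$, so in fact Theorem \ref{thmwB}(i) alone handles \emph{all} of $H_n$ once $\|x\|_1 \ge n_0$, and the explicit tail bound from Proposition \ref{prop1} is only needed as a fallback if one prefers to avoid invoking Theorem \ref{thmwB}(i). The one genuinely delicate point is the inequality $\wmu(x) \ge n\,\wmu(e_1)$ for $x \in H_n$: this requires knowing $\wmu$ is a norm, hence the hypothesis enters only through $F([0,+\infty[)>p_c(d)$ combined with the dichotomy in Theorem \ref{thmwT} — and if $\wmu \equiv 0$ the statement of Proposition \ref{propbn} is trivially $0 = 0$, so there is no loss in assuming $\wmu(e_1)>0$ throughout the lower-bound argument.
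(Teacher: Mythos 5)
Your argument is correct and follows essentially the same route as the paper: the upper bound via $\wT(0,H_n)\le\wT(0,ne_1)$, the reduction to the case $\wmu(e_1)>0$, and the lower bound by combining the uniform convergence of Theorem \ref{thmwB}(i) (i.e.\ Lemma \ref{lem2}) with the reflection/subadditivity inequality $\wmu(x)\ge n\,\wmu(e_1)$ for $x\in H_n$ — and your explicit handling of the regime $\|x\|_1\gg n$ is in fact slightly more careful than the paper's. One caveat: your ``fallback'' via Proposition \ref{prop1} would not work, since that proposition controls the upper tail $\PP[\wT(0,x)>l]$ and says nothing about $\PP[\wT(0,x)\le\lambda]$, but as you note you never actually need it.
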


\begin{proof}
We have $\wT(0, H_n) \leq \wT (0, ne_1)$ thus
$$ \limsup_{n\rightarrow \infty} \frac{\wT(0,H_n)}{n} \,\leq \lim_{n\rightarrow \infty} \frac{\wT(0, ne_1)}{n} \,=\, \wmu(e_1) \quad \textrm{a.s.} $$
If $\wmu(e_1) =0$, Proposition \ref{propbn} is proved. Suppose that $\wmu(e_1) > 0$. If 
$$\liminf_{n\rightarrow \infty} \frac{\wT(0,H_n)}{n}\, <\, \wmu(e_1)\,,$$
it implies that there exist $\eps>0$, a sequence $n_k \in \NN^*$ such $\lim_{k\rightarrow \infty} n_k = +\infty$ and a sequence of vertices $y_k \in H_{n_k}$ such that
\begin{equation}
\label{eqbn1}
 \lim_{k\rightarrow \infty} \frac{\wT (0,y_k)}{n_k} \,\leq \, \wmu(e_1) - \eps \,. 
\end{equation}
Since $\|y_k\| \geq n_k$, by Lemma \ref{lem2} we know that a.s., for all $k$ large enough,
\begin{equation}
\label{eqbn2}
\frac{\wT(0,y_k)}{\|y_k\|_1} \,\geq \, \wmu \left( \frac{y_k}{\|y_k\|_1} \right) - \frac{\eps}{2} \,.
\end{equation}
By (\ref{eqmaj}) we know that whatever $y \in \ZZ^d$, $\wmu ( y/\|y\|_1 ) \geq \wmu (e_1)$, thus inequalities (\ref{eqbn1}) and (\ref{eqbn2}) are in contradiction. This ends the proof of Proposition \ref{propbn}.
\end{proof}

In the proof of proposition \ref{propGD}, we would like to make appear of sequence of independent point-to-line passage times. However, there would be too many possible starting points for these point-to-line passage times. To control the combinatorial factor that will appear, we use instead some "plaquette-to-line" passage times. The following lemma helps us to control the difference between a path starting at a point and a path starting somewhere in a small plaquette near this point.
\begin{lem}
\label{lemDM}
Suppose that $F([0,+\infty[) > p_c(d)$. For any $\alpha >0$, there exists $\delta_0 >0$ such that for all $\delta \leq \delta_0$, we have
$$ \lim_{K \rightarrow \infty} \PP \left[ \max \left\{ D_\infty (\w0, \wy) \,|\, y \in \ZZ^d \cap \left( \{0\} \times [0, \delta K[^{d-1}\right) \right\} >\alpha K \right] \,=\, 0 \,. $$
\end{lem}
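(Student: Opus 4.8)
The plan is to confine all the points $\widetilde y$, $y$ ranging over the plaquette, together with $\w0$, to a box of side of order $\delta K$ around the origin using the hole estimate (Theorem \ref{thmholes}), and then to bound the chemical distance between any two connected points of such a box by a constant times $\delta K$ using Antal--Pisztora's estimate (Theorem \ref{thmAP}); one then chooses $\delta_0$ so that this constant times $\delta_0$ is at most $\alpha$. Fix $\alpha>0$. Let $A_1,A_2,A_3$ be the constants obtained by applying Theorem \ref{thmAP} to the supercritical percolation $(\ind{\{t(e)<\infty\}},e\in\EE^d)$, whose parameter is $F([0,+\infty[)>p_c(d)$, and let $A_6,A_7$ be the constants obtained by applying Theorem \ref{thmholes} to the percolation $(\ind{\{t(e)\le M\}},e\in\EE^d)$, whose parameter is $F([0,M])>p_c(d)$ and whose infinite cluster is $\C_M$. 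Set $\delta_0=\alpha/(2dA_3)$ and fix $\delta\le\delta_0$. Write $\mathcal P_K=\ZZ^d\cap(\{0\}\times[0,\delta K[^{d-1})$; then $0\in\mathcal P_K$, every $y\in\mathcal P_K$ satisfies $\|y\|_1<(d-1)\delta K$, and $|\mathcal P_K|\le(\delta K+1)^{d-1}$.

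First I would control the positions of the $\widetilde y$'s. Let $\mathcal A_K=\bigcup_{y\in\mathcal P_K}\{\C_M\cap\B_1(y,\delta K)=\emptyset\}$. By Theorem \ref{thmholes} and translation invariance, $\PP[\mathcal A_K]\le(\delta K+1)^{d-1}A_6e^{-A_7\delta K}$, which tends to $0$ as $K\to\infty$ for each fixed $\delta>0$. On $\mathcal A_K^c$ we have $\|\widetilde y-y\|_1\le\delta K$ for every $y\in\mathcal P_K$ (in particular $\|\w0\|_1\le\delta K$), so $\|\widetilde y\|_1\le d\delta K$ and $\|\w0-\widetilde y\|_1\le 2d\delta K$; thus all the points $\w0$ and $\widetilde y$, $y\in\mathcal P_K$, lie in the box $Q_K:=\B_1(0,d\delta K)\cap\ZZ^d$, whose cardinality is at most $c_d(\delta K)^d$ for a dimensional constant $c_d$ (for $K\ge1/\delta$). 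Next I would bound the chemical distances. Since every edge open for $\{t(e)\le M\}$ is open for $\{t(e)<\infty\}$, we have $\C_M\subseteq\C_\infty$ as subgraphs and $\C_M$ is connected, so $\w0$ and $\widetilde y$ are always connected in the percolation $(\ind{\{t(e)<\infty\}})$ and $D_\infty(\w0,\widetilde y)<\infty$. Let
\[
\mathcal B_K=\bigcup_{a,b\in Q_K}\bigl\{a\overset{\infty}{\longleftrightarrow}b,\ D_\infty(a,b)\ge 2dA_3\delta K\bigr\}.
\]
For $a,b\in Q_K$ one has $\|a-b\|_1\le 2d\delta K$, hence $2dA_3\delta K\ge A_3\|a-b\|_1$, so Theorem \ref{thmAP} (applied to the vector $b-a$, using translation invariance) gives $\PP[a\overset{\infty}{\longleftrightarrow}b,\ D_\infty(a,b)\ge 2dA_3\delta K]\le A_1e^{-2dA_2A_3\delta K}$ for each pair. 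Summing over the at most $c_d^2(\delta K)^{2d}$ pairs, $\PP[\mathcal B_K]\le c_d^2(\delta K)^{2d}A_1e^{-2dA_2A_3\delta K}$, which tends to $0$ as $K\to\infty$.

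Finally I would conclude. On $\mathcal A_K^c\cap\mathcal B_K^c$, for every $y\in\mathcal P_K$ the pair $(\w0,\widetilde y)\in Q_K\times Q_K$ does not witness $\mathcal B_K$, hence $D_\infty(\w0,\widetilde y)<2dA_3\delta K\le 2dA_3\delta_0K=\alpha K$. Therefore
\[
\PP\Bigl[\max\{D_\infty(\w0,\widetilde y):y\in\ZZ^d\cap(\{0\}\times[0,\delta K[^{d-1})\}>\alpha K\Bigr]\le\PP[\mathcal A_K]+\PP[\mathcal B_K],
\]
and the right-hand side tends to $0$ as $K\to\infty$, which is the claim. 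There is no serious obstacle here: the proof is a routine combination of the two cited percolation estimates with union bounds. The only mildly delicate point is to keep the two union bounds negligible --- the plaquette is only $(d-1)$-dimensional and $Q_K$ has polynomial cardinality in $K$, whereas for each fixed $\delta>0$ the per-event probabilities decay exponentially in $K$ since the relevant scales $\delta K$ and $2dA_3\delta K$ are linear in $K$ --- and to track the constant so that $2dA_3\delta\le\alpha$.
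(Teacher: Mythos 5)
Your proof is correct, but it takes a genuinely different route from the paper's. The paper encodes the maximum of the chemical distances as a regularized passage time for the auxiliary law $F' = F([0,+\infty[)\,\delta_1 + F(\{+\infty\})\,\delta_{+\infty}$: since $F'(\{0\})=0<p_c(d)$, Proposition \ref{propF(0)<pc} gives $\wmu'(e_1)>0$, one chooses $\delta_0$ so that $\{0\}\times[0,\delta/\alpha[^{d-1}\subset\frac{1}{2}\B_{\tilde\mu'}$, and the conclusion follows from the strong shape theorem applied to $\wB'_{\alpha K}$. You instead argue directly from the two percolation inputs: Theorem \ref{thmholes} confines $\w0$ and all the $\wy$ to a deterministic box of $\ell^1$-radius $d\delta K$, and Theorem \ref{thmAP}, applied via a union bound to all pairs of points in that box (all relevant pairs being connected since $\C_M\subset\C_\infty$), bounds every $D_\infty(\w0,\wy)$ by $2dA_3\delta K\le\alpha K$ outside an event of vanishing probability; the polynomial number of terms in each union bound is beaten by the exponential decay at scale $\delta K$. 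Both arguments are sound. Yours is more elementary and self-contained, makes $\delta_0=\alpha/(2dA_3)$ explicit, and actually yields an exponential rate of decay in $K$ rather than mere convergence to $0$; it also works verbatim for any $M$ with $F([0,M])>p_c(d)$, whereas the paper's identification $D_\infty(\w0,\wy)=\wT'(0,y)$ implicitly uses $\C_M=\C_\infty$ (harmless where the lemma is applied, inside Proposition \ref{propGD}, but not automatic under the lemma's stated hypothesis). What the paper's route buys is brevity given the machinery already established and a conceptual link to the chemical-distance norm $\wmu'$.
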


\begin{proof}
For every edge $e\in \EE^d$, we define the time $t'(e)$  by
$$ t'(e) \,=\, \left\{ \begin{array}{ll} 1 & \textrm{ if }t(e)<\infty \,, \\ +\infty & \textrm{ if }t(e)=+\infty \,. \end{array} \right.  $$
The law of $t'(e)$ is $F' = F([0,\infty[) \delta_{1} + F(\{ +\infty \}) \delta_{+\infty}$. We denote by $T', \wT', \wmu', \wB_t', \B_{\tilde \mu'}$ the objects associated with these new times $(t'(e), e \in \EE^d)$. Notice that $D_\infty (\w0, \wy) = \wT'(0,y)$, thus we have
\begin{align}
\label{eqDM1}
& \left\{ \max \left\{ D_\infty (\w0, \wy) \,|\, y \in \ZZ^d \cap \left(\{0\} \times [0, \delta K[^{d-1}\right) \right\} > \alpha K  \right\} \nonumber\\
&\qquad\qquad \qquad\,= \,\left\{ \max \left\{ \wT'(0,y) \,|\, y \in \ZZ^d \cap \left(\{0\} \times [0, \delta K[^{d-1}\right) \right\} > \alpha K  \right\} \nonumber \\
&\qquad\qquad \qquad \,= \, \{ \{0\} \times [0, \delta K[^{d-1} \not\subset \wB'_{\alpha K} \}  \nonumber \\
&\qquad\qquad \qquad \,= \, \left\{ \{0\} \times [0, \delta /\alpha [^{d-1} \not\subset \frac{\wB'_{\alpha K}}{\alpha K} \right\} \,.
\end{align}
Since $F' ([0,+\infty[) = F([0,+\infty[) > p_c(d)$ and $F'(\{0\}) = 0 < p_c(d)$, we know by Proposition \ref{propF(0)<pc} that $\wmu'(e_1) >0$, thus there exists $\delta_0 >0$ such that for all $\delta \leq \delta_0$, 
\begin{equation}
\label{eqDM2}
 \{0\} \times [0, \delta /\alpha [^{d-1} \,\subset\, \frac{1}{2} \B_{\tilde \mu'} \,.
\end{equation}
Combining (\ref{eqDM1}) and (\ref{eqDM2}), we obtain for all $\delta \leq \delta_0$
$$\PP  \left[ \max \left\{ D_\infty (\w0, \wy) \,|\, y \in \ZZ^d \cap \left( \{0\} \times [0, \delta K[^{d-1}\right) \right\} >\alpha K \right]  \,\leq \, \PP \left[  \frac{1}{2} \B_{\tilde \mu'} \not\subset \frac{\wB'_{\alpha K}}{\alpha K} \right] \,.$$
Moreover, since $\wmu'(e_1) >0$, the strong shape theorem applies to the sets $B'_t$, thus
$$ \lim_{K \rightarrow \infty}\PP \left[ \frac{1}{2} \B_{\tilde \mu'} \not\subset \frac{\wB'_{\alpha K}}{\alpha K} \right]  \,=\, 0 \, .$$
\end{proof}

\begin{proof}[Proof of Proposition \ref{propGD}]
We follow Kesten's proof of Inequality (5.3) of Theorem (5.2) in \cite{Kesten:StFlour}, which relies on the proof of Proposition (5.23). We do minor adaptations, but for the sake of completeness, we present the entire proof. In what follows, the $M$ we choose to define the regularized times $\wT$ satisfies $F(]M,+\infty[) =0$, thus $\C_\infty = \C_M$. Fix $n\in \NN^*$. On the event $\{ 0\in \C_M \,,\, T(0,H_n) < x \}$, there exists a self avoiding path $\gamma = (v_0 =0, e_1, v_1, \dots ,e_p, v_p)$ from $0$ to a point $v_p \in H_n$ such that $T(\gamma) <x$. We have necessarily $p=|\gamma| \geq n$ and $\gamma \subset \C_\infty $. We want to cut this path $\gamma$ into several pieces. Fix $N,K\in \NN^*$ to be chosen later, with $N\leq K$. Let $\tau (0)= 0$ and $a_0=v_{\tau (0)} =0$. For all $i\geq 0$, let 
$$\tau(i+1) \, =\, \min \{ k> \tau( i) \,|\, \| a_i - v_k \|_\infty = K+N  \}$$
and $a_{i+1} = v_{\tau (i+1)}$ as long as the set $\{ k> \tau( i) \,|\, \| a_i - v_k \|_\infty = K+N  \}$ is non empty (see Figure \ref{fig3}).
\begin{figure}[h!]
\centering
\begin{picture}(0,0)%
\includegraphics{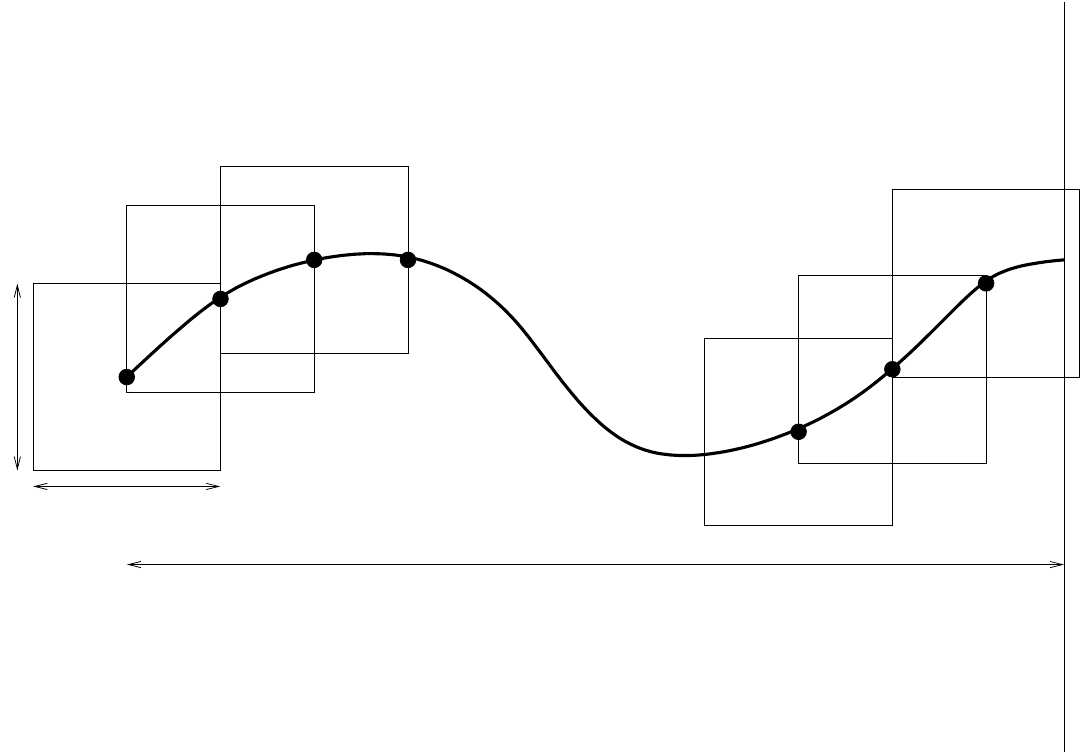}%
\end{picture}%
\setlength{\unitlength}{1973sp}%
\begingroup\makeatletter\ifx\SetFigFont\undefined%
\gdef\SetFigFont#1#2#3#4#5{%
  \reset@font\fontsize{#1}{#2pt}%
  \fontfamily{#3}\fontseries{#4}\fontshape{#5}%
  \selectfont}%
\fi\endgroup%
\begin{picture}(10455,7224)(1486,-7273)
\put(10126,-3886){\makebox(0,0)[lb]{\smash{{\SetFigFont{8}{9.6}{\rmdefault}{\mddefault}{\updefault}{\color[rgb]{0,0,0}$a_{Q-1}$}%
}}}}
\put(11926,-4036){\makebox(0,0)[lb]{\smash{{\SetFigFont{8}{9.6}{\rmdefault}{\mddefault}{\updefault}{\color[rgb]{0,0,0}$H_n$}%
}}}}
\put(4651,-2311){\makebox(0,0)[lb]{\smash{{\SetFigFont{8}{9.6}{\rmdefault}{\mddefault}{\updefault}{\color[rgb]{0,0,0}$a_2$}%
}}}}
\put(2701,-5086){\makebox(0,0)[b]{\smash{{\SetFigFont{8}{9.6}{\rmdefault}{\mddefault}{\updefault}{\color[rgb]{0,0,0}$2(K+N)$}%
}}}}
\put(6901,-5911){\makebox(0,0)[b]{\smash{{\SetFigFont{8}{9.6}{\rmdefault}{\mddefault}{\updefault}{\color[rgb]{0,0,0}$n$}%
}}}}
\put(1501,-3736){\makebox(0,0)[rb]{\smash{{\SetFigFont{8}{9.6}{\rmdefault}{\mddefault}{\updefault}{\color[rgb]{0,0,0}$2(K+N)$}%
}}}}
\put(2701,-4111){\makebox(0,0)[b]{\smash{{\SetFigFont{8}{9.6}{\rmdefault}{\mddefault}{\updefault}{\color[rgb]{0,0,0}$0=a_0$}%
}}}}
\put(9076,-4036){\makebox(0,0)[rb]{\smash{{\SetFigFont{8}{9.6}{\rmdefault}{\mddefault}{\updefault}{\color[rgb]{0,0,0}$a_{Q-2}$}%
}}}}
\put(6901,-3361){\makebox(0,0)[b]{\smash{{\SetFigFont{8}{9.6}{\rmdefault}{\mddefault}{\updefault}{\color[rgb]{0,0,0}$\gamma$}%
}}}}
\put(3751,-3211){\makebox(0,0)[lb]{\smash{{\SetFigFont{8}{9.6}{\rmdefault}{\mddefault}{\updefault}{\color[rgb]{0,0,0}$a_1$}%
}}}}
\put(5476,-2911){\makebox(0,0)[lb]{\smash{{\SetFigFont{8}{9.6}{\rmdefault}{\mddefault}{\updefault}{\color[rgb]{0,0,0}$a_3$}%
}}}}
\put(11026,-2986){\makebox(0,0)[lb]{\smash{{\SetFigFont{8}{9.6}{\rmdefault}{\mddefault}{\updefault}{\color[rgb]{0,0,0}$a_Q$}%
}}}}
\end{picture}%
\caption{Construction of the points $a_i$.}
\label{fig3}
\end{figure}
When this set becomes empty, we define $Q=i$ and we stop the process. For $z\in \ZZ^d$, we denote by $z(i)$ the $i$-th coordinate of $z$. By definition, we know that $a_{i} (1) \leq a_{i-1} (1) + K+N$, for all $i\leq Q$. Since $v_p \in H_n$, we know that
\begin{equation}
\label{eqQ}
n \,=\, v_p(1) \,<\, a_Q(1) + K+N \,\leq\, (Q+1)(K+N) \,.
\end{equation}
By definition of the points $a_i$, for all $i\in\{0,\dots , Q-1\}$, there exists $\nu(i)\in \{1,\dots , d\}$ and $\eta (i) \in \{ -1, +1 \}$ such that 
$$ a_{i+1} (\nu(i)) \,= \, a_i (\nu(i)) + \eta(i) (K+N) \,.$$
We want to compare the part of $\gamma$ between $a_i$ and $a_{i+1}$ with a point-to-line passage time. The variables $\nu(i)$ and $\eta(i)$ tell us the direction and the sense in which this point-to-line displacement happens. However, the starting point $a_i$ is random, and may be located at too many different positions. Thus we define a shorter piece of $\gamma$ that we can describe with less parameters. We follow the notations of Kesten's proof of Proposition (5.23) in \cite{Kesten:StFlour} (see Figure \ref{fig4}).
\begin{figure}[h!]
\centering
\begin{picture}(0,0)%
\includegraphics{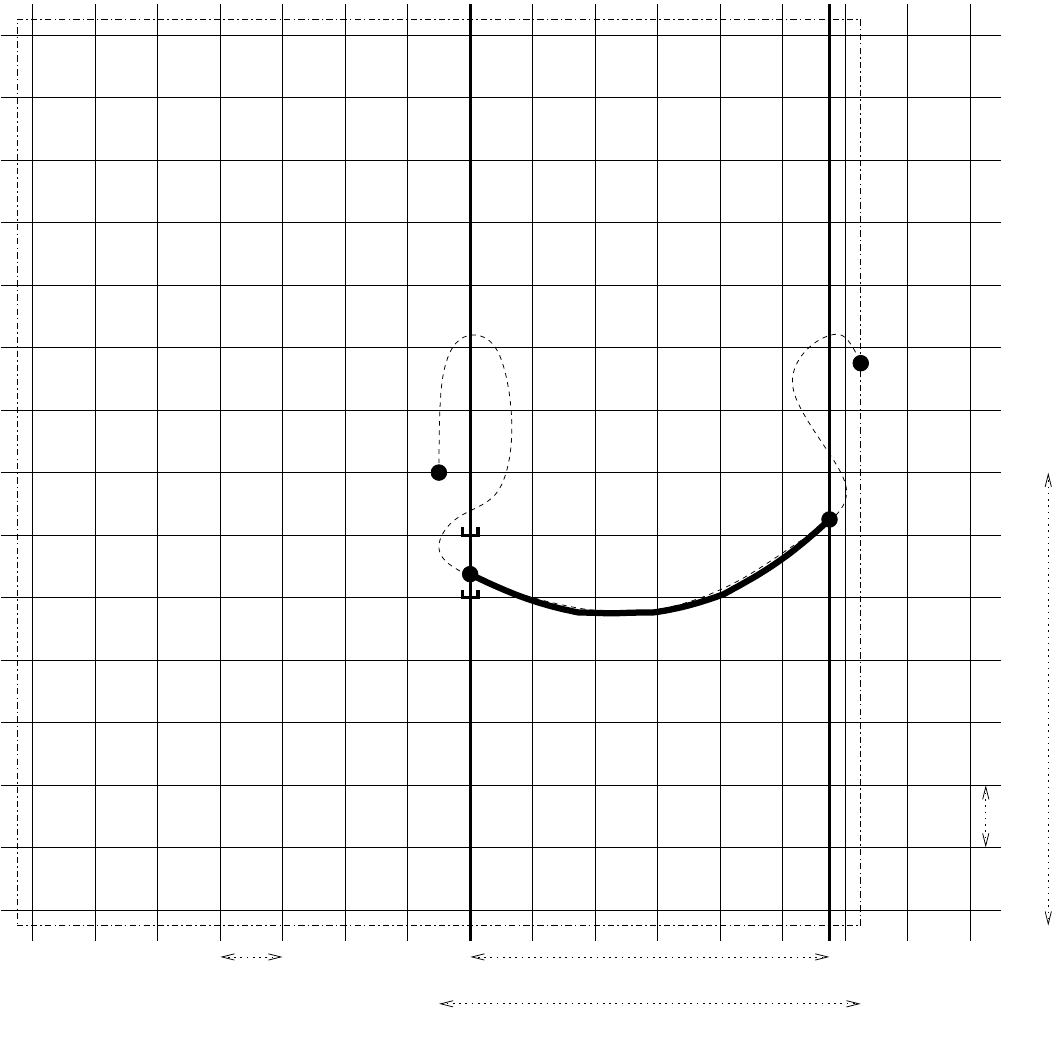}%
\end{picture}%
\setlength{\unitlength}{1973sp}%
\begingroup\makeatletter\ifx\SetFigFont\undefined%
\gdef\SetFigFont#1#2#3#4#5{%
  \reset@font\fontsize{#1}{#2pt}%
  \fontfamily{#3}\fontseries{#4}\fontshape{#5}%
  \selectfont}%
\fi\endgroup%
\begin{picture}(10152,10095)(289,-8623)
\put(4426,-4036){\makebox(0,0)[rb]{\smash{{\SetFigFont{8}{9.6}{\rmdefault}{\mddefault}{\updefault}{\color[rgb]{0,0,0}$V(i) \bigg\{$}%
}}}}
\put(4501,-3436){\makebox(0,0)[rb]{\smash{{\SetFigFont{8}{9.6}{\rmdefault}{\mddefault}{\updefault}{\color[rgb]{0,0,0}$a_i$}%
}}}}
\put(8701,-2236){\makebox(0,0)[lb]{\smash{{\SetFigFont{8}{9.6}{\rmdefault}{\mddefault}{\updefault}{\color[rgb]{0,0,0}$a_{i+1}$}%
}}}}
\put(4876,-3961){\makebox(0,0)[lb]{\smash{{\SetFigFont{8}{9.6}{\rmdefault}{\mddefault}{\updefault}{\color[rgb]{0,0,0}$v_{\rho(i)}$}%
}}}}
\put(6301,-4786){\makebox(0,0)[b]{\smash{{\SetFigFont{8}{9.6}{\rmdefault}{\mddefault}{\updefault}{\color[rgb]{0,0,0}$\gamma(i)$}%
}}}}
\put(8176,-3361){\makebox(0,0)[rb]{\smash{{\SetFigFont{8}{9.6}{\rmdefault}{\mddefault}{\updefault}{\color[rgb]{0,0,0}$v_{\sigma(i)}$}%
}}}}
\put(8176,164){\makebox(0,0)[rb]{\smash{{\SetFigFont{8}{9.6}{\rmdefault}{\mddefault}{\updefault}{\color[rgb]{0,0,0}$H''_i$}%
}}}}
\put(4726,164){\makebox(0,0)[rb]{\smash{{\SetFigFont{8}{9.6}{\rmdefault}{\mddefault}{\updefault}{\color[rgb]{0,0,0}$H'_i$}%
}}}}
\put(4426,-2236){\makebox(0,0)[rb]{\smash{{\SetFigFont{8}{9.6}{\rmdefault}{\mddefault}{\updefault}{\color[rgb]{0,0,0}$\gamma$}%
}}}}
\put(6451,-8011){\makebox(0,0)[b]{\smash{{\SetFigFont{8}{9.6}{\rmdefault}{\mddefault}{\updefault}{\color[rgb]{0,0,0}$K$}%
}}}}
\put(6451,-8536){\makebox(0,0)[b]{\smash{{\SetFigFont{8}{9.6}{\rmdefault}{\mddefault}{\updefault}{\color[rgb]{0,0,0}$K+N$}%
}}}}
\put(10426,-5161){\makebox(0,0)[lb]{\smash{{\SetFigFont{8}{9.6}{\rmdefault}{\mddefault}{\updefault}{\color[rgb]{0,0,0}$K+N$}%
}}}}
\put(9826,-6511){\makebox(0,0)[lb]{\smash{{\SetFigFont{8}{9.6}{\rmdefault}{\mddefault}{\updefault}{\color[rgb]{0,0,0}$N$}%
}}}}
\put(2701,-8011){\makebox(0,0)[b]{\smash{{\SetFigFont{8}{9.6}{\rmdefault}{\mddefault}{\updefault}{\color[rgb]{0,0,0}$N$}%
}}}}
\end{picture}%

\caption{Construction of the path $\gamma(i)$.}
\label{fig4}
\end{figure}
For $i \in \{ 0,\dots , Q-1 \}$, we define
$$  \rho (i) \,=\, \max \left\{ k \in \{ \tau(i) , \dots , \tau(i+1)  \} \,|\, v_k (\nu(i)) = \left( \left\lfloor \frac{a_i (\nu(i))}{N} \right\rfloor +\frac{1+\eta(i)}{2} \right) N \right\} $$
and
$$  \sigma (i) \,=\, \min \left\{ k \in \{ \rho(i)  , \dots , \tau(i+1)  \} \,|\, v_k (\nu(i)) = \left( \left\lfloor \frac{a_i (\nu(i))}{N} \right\rfloor +\frac{1+\eta(i)}{2} \right) N +  \eta(i) K \right\} \,.$$
We have 
$$  \tau (i) \,\leq \,\rho (i)\, <\, \sigma (i) \,\leq\, \tau(i+1) \,$$
and we denote by $\gamma(i)$ the piece of $\gamma$ between $v_{\rho(i)}$ and $v_{\sigma (i)}$. By construction, $\gamma(i)$ lies strictly between the hyperplanes
$$ H_i' \,=\,  \left\{ x\in \RR^d \,|\, x  (\nu(i)) = \left( \left\lfloor \frac{a_i (\nu(i))}{N} \right\rfloor +\frac{1+\eta(i)}{2} \right) N \right\}$$
and 
$$ H_i'' \,=\,  \left\{ x\in \RR^d \,|\, x  (\nu(i)) = \left( \left\lfloor \frac{a_i (\nu(i))}{N} \right\rfloor +\frac{1+\eta(i)}{2} \right) N+  \eta(i) K \right\}\,,$$
except for its first and last points which belong to these hyperplanes. The plaquettes of the form
$$ V(i, \lambda) \,=\, \{ x\in H'(i) \,|\, \forall j \neq \nu(i)\,,\, x(j) \in [\lambda (j) N, (\lambda (j)+1) N [  \} \,,\quad \lambda(j) \in \ZZ \,, 1\leq j \leq d\,,\, j\neq \nu(i) $$
form a tiling of $H'(i)$. We denote by $\lambda_i = (\lambda_i(j), j\neq \nu(i))$ the unique choice of $\lambda$ such that $V(i,\lambda)$ contains $v_{\rho(i)}$, and we define $V(i) = V(i, \lambda_i)$. Remember that for all $k\in \{ \tau(i), \dots , \tau(i+1) \}$, $\|a_i - v_k \|_\infty \leq K+N$. Thus by construction the pieces $\gamma(i)$, $i\in \{0,\dots , Q-1\}$, have the following properties :
\begin{itemize}
\item for $i\neq j$, the paths $\gamma(i)$ and $\gamma(j)$ are edge disjoint ;
\item $\gamma(i) \subset \C_\infty$ ;
\item $\gamma(i)$ connects $V(i)$ to $H''(i)$ ;
\item $\gamma(i)$ is included in the box $G(i)$ defined by
$$ G(i) \, = \, \left\{ x\in \RR^d \,\bigg|\, \begin{array}{c} \forall j \neq \nu(i) \,, \, x(j) \in [ \lambda_i(j) -2 (K+N) , \lambda_i(j) + 2(K+N) ]  \textrm{ and }\\ x(\nu(i)) \in \left[ \left( \left\lfloor \frac{a_i (\nu(i))}{N} \right\rfloor +\frac{1+\eta(i)}{2} \right) N , \left( \left\lfloor \frac{a_i (\nu(i))}{N} \right\rfloor +\frac{1+\eta(i)}{2} \right) N + \eta(i) K \right] \end{array} \right\} \,.$$
\end{itemize}
The path $\gamma(i)$ is a "plaquette-to-face" path in the box $G(i)$. Notice that $G(i)$ is completely determined by the triplet $\Lambda (i) := (\nu (i), \eta (i), V(i))$. We obtain that
\begin{align}
\label{eqev}
 \PP & [0 \in \C_M \,,\, T(0,H_n) < x] \nonumber\\
 & \quad \,\leq\, \sum_{Q \geq \frac{n}{K+N} -1} \,\, \sum_{\tiny{\begin{array}{c}\Lambda (i),\\ i=0,\dots , Q-1 \end{array}} } \PP \left[ \begin{array}{c} \exists \gamma \textrm{ self avoiding path s.t. } \cup_{i=0}^{Q-1} \gamma(i) \subset \gamma \subset \C_\infty \,,\\ \sum_{i=0}^{Q-1} T(\gamma(i)) <x \textrm{ and }\\ \forall i \,,\, \gamma(i) \textrm{ crosses } G(i) \textrm{ from } V(i) \textrm{ to } H''_i  \end{array} \right] \,.
 \end{align}
Let us define the event
$$ A(\Lambda (i), x_i) \,=\, \left\{ \begin{array}{c}   \exists \gamma(i) \textrm{ self avoiding path s.t. }  \gamma(i) \subset \C_\infty \,,\, T(\gamma(i)) <x_i \\ \textrm{and } \gamma(i) \textrm{ crosses } G(i) \textrm{ from } V(i)  \textrm{ to } H''_i  \end{array}  \right\} \,.$$
If $A$ and $B$ are two events, we denote by $A\circ B$ the event that $A$ and $B$ occur disjointly (see (4.17) in \cite{Kesten:StFlour} for a precise definition). Inequality (\ref{eqev}) can be reformulated as
\begin{align}
\label{eqev2}
 \PP &  [0 \in \C_M \,,\, T(0,H_n) < x] \nonumber\\
 & \,\leq\, \sum_{Q \geq \frac{n}{K+N} -1} \,\, \sum_{\tiny{\begin{array}{c}\Lambda (i),\\ i=0,\dots , Q-1 \end{array}} } \PP \left[ \bigcup_{\tiny{\begin{array}{c} x_i \in \QQ^+ \,,\, i=0,\dots , Q-1 \\ \textrm{s.t. } \sum_{i=0}^{Q-1} x_i <x \end{array} }}  A(\Lambda (0), x_0) \circ  \cdots \circ A(\Lambda (Q-1), x_{Q-1}) \right] \,.
 \end{align}
The events $A(\Lambda (i), x_i)$, $i \in \{0, \dots , Q-1  \}$ are decreasing (notice that since $t'(e) \leq t(e)$ for all $e$, then the infinite cluster of edges of finite time $t(e)$ is included in the infinite cluster of edges of finite time $t'(e)$). Thus we can apply Theorem (4.8) in \cite{Kesten:StFlour}. Let $(A'(\Lambda (i), x_i))$ be events defined on a new probability space $(\Omega', \PP')$ such that the families $(A'(\Lambda (i), x_i), x_i \in \QQ^+)$ for $i\in \{0,\dots , Q-1\}$ are independent, and for each $i$ the joint distribution of $(A'(\Lambda (i), x_i), x_i \in \QQ^+)$ under $\PP'$ is the same as the joint distribution of $(A(\Lambda (i), x_i), x_i \in \QQ^+)$ under $\PP$. Then
\begin{align}
\label{eqev3}
\PP & \left[ \bigcup_{\tiny{\begin{array}{c} x_i \in \QQ^+ \,,\, i=0,\dots , Q-1 \\ \textrm{s.t. }\sum_{i=0}^{Q-1} x_i <x \end{array} }}  A(\Lambda (0), x_0) \circ  \cdots \circ A(\Lambda (Q-1), x_{Q-1}) \right] \nonumber\\
&\qquad \qquad \qquad  \,\leq\, \PP' \left[ \bigcup_{\tiny{\begin{array}{c} x_i \in \QQ^+ \,,\, i=0,\dots , Q-1 \\ \textrm{s.t. }\sum_{i=0}^{Q-1} x_i <x \end{array} }}  \bigcap_{i=0}^{Q-1} A'(\Lambda (i), x_i)  \right]\,.
\end{align}
Let $(Y_i(K,N), i\in\{0,\dots ,Q-1 \})$ be i.i.d. random variables with the same law as $Y(K,N)$ defined by 
$$ Y (K,N) \,=\, \inf \{ T(\gamma) \,|\,  \gamma \textrm{ is a path from } \{0\} \times [0,N[^{d-1} \textrm{ to } H_K \textrm{ and } \gamma \subset \C_{\infty} \} \,,$$
with $Y(M,N)=+\infty$ if such a path does not exists. Then combining (\ref{eqev2}) and (\ref{eqev3}), and relaxing the constraints required on the paths $\gamma(i)$, we obtain
\begin{align}
\label{eqev4}
 \PP  [0 \in \C_M \,,\, T(0,H_n) < x] & \,\leq\, \sum_{Q \geq \frac{n}{K+N} -1} \,\, \sum_{\tiny{\begin{array}{c}\Lambda (i),\\ i=0,\dots , Q-1 \end{array}} }  \PP \left[ \sum_{i=0}^{Q-1} Y_i(K,N) <x \right] \nonumber \\
 & \,\leq\, \sum_{Q \geq \frac{n}{K+N} -1}  \left[ 2d \left( 15 K/N \right)^d \right]^Q  \PP \left[ \sum_{i=0}^{Q-1} Y_i(K,N) <x \right]  \,.
\end{align}
In (\ref{eqev4}), the combinatorial estimate has been derived by Kesten's proof of Proposition (5.23) in \cite{Kesten:StFlour}. Inequality (\ref{eqev4}) is a complete analog of Kesten's Proposition (5.23), except that we require in the definition of $Y(K,N)$ that the path $\gamma$ lies in $\C_\infty$. This allows us to compare $Y(K,N)$ with the time $\wT (0,H_K) = \inf \{ T(\w0, \wx ) \,|\, x \in H_K \}$. We make a crucial use of the fact that $\C_M = \C_\infty$ to obtain that if $\gamma $ is a path as in the definition of $Y(K,N)$, and if we denote by $y$ its starting point in $ \{0\} \times [0,N[^{d-1}  \cap \, \C_\infty$, then
$$ \wT(0,H_K) \,\leq \, T(\gamma) + T(\w0, y) \,\leq\, T(\gamma) + M D_M (\w0 , y)\,.$$
Taking the infimum over such paths $\gamma$, we obtain that
\begin{align}
\label{eqev5}
\wT (0,H_K) \,\leq\, Y(K,N) + M \max \left\{ D_M (\w0, y) \,|\, y\in \left(\{0\} \times [0,N[^{d-1} \right) \cap \,\C_\infty  \right\}\,.
\end{align}
This will help us to control the right hand side of (\ref{eqev4}). Take $x=n(\wmu (e_1) - \eps)$ in (\ref{eqev4}). Following Kesten's proof of Proposition (5.29) in \cite{Kesten:StFlour}, we have for all $\lambda>0$ the following upper bounds :
\begin{align}
\label{eqev6}
 \PP &  \left[ \sum_{i=0}^{Q-1} Y_i(K,N) < n(\wmu(e_1) - \eps) \right] \nonumber \\
 & \,\leq \, e^{\lambda n (\tilde \mu(e_1) -\eps)}  \, \EE \left[ e^{-\lambda Y(K,N)} \right]^Q \nonumber  \\
& \,\leq \, e^{\lambda (K+N)\tilde \mu(e_1)}\,\left[ \EE\left[  e^{-\lambda Y(K,N)}\right] e^{\lambda (K+N) (\tilde \mu(e_1) - \eps)}  \right]^Q \nonumber  \\
& \,\leq \,  e^{\lambda (K+N)\tilde \mu(e_1)}\, \left[  \left( e^{-\lambda K (\tilde \mu(e_1) - \eps/2)} + \PP \left[ Y(K,N) < K (\wmu(e_1) - \eps/2) \right] \right) e^{\lambda (K+N) (\tilde \mu(e_1) - \eps)}  \right]^Q \nonumber  \\
& \,\leq \, e^{\lambda (K+N)\tilde \mu(e_1)}\,\nonumber  \\
& \quad \times \left[ e^{\lambda \left[- K (\tilde \mu(e_1) - \eps/2) + (K+N) (\tilde \mu(e_1) - \eps) \right]} + e^{\lambda (K+N) (\tilde \mu(e_1) - \eps)} \, \PP \left[ Y(K,N) < K (\wmu(e_1) - \eps/2) \right]  \right]^Q \,.
\end{align}
Take $N = \delta K$, with $\delta \leq 1$. There exists  $ \delta_1 (\wmu(e_1), \eps) >0$ such that for all $\delta \leq \delta_1$, and for $\lambda = R/K$, we obtain
\begin{align}
\label{eqev7}
e^{\lambda \left[- K (\tilde \mu(e_1) - \eps/2) + (K+N) (\tilde \mu(e_1) - \eps) \right]} & \,\leq \, e^{- R \left[ \tilde \mu(e_1) - \eps/2 - (1+\delta) (\tilde \mu(e_1)-\eps) \right]} \nonumber \\
& \,\leq\, e^{-R (\eps/2 - \delta (\tilde \mu(e_1) - \eps))} \,\leq \, e^{-R\eps/4} \,,
\end{align}
and we can choose $R$ large enough (how large depending only on $\eps$) so that the right hand side of (\ref{eqev7}) is as small as we want. Moreover, using (\ref{eqev5}) we have
\begin{align}
\label{eqev8}
e^{\lambda (K+N) (\tilde \mu(e_1) - \eps)}  & \, \PP \left[ Y(K,N) < K (\wmu(e_1) - \eps/2) \right] \nonumber \\
& \,\leq \, e^{2 R \tilde \mu(e_1)} \, \max_{N\leq \delta K} \PP \left[ Y(K,N) < K (\wmu(e_1) - \eps/2) \right] \nonumber \\
& \,\leq \, e^{2 R \tilde \mu(e_1)} \bigg(  \PP\left[ \wT (0,H_K)  <  K (\wmu(e_1) - \eps/4) \right] \nonumber\\
&\qquad  \qquad+ \PP \left[ \max \left\{ D_M (\w0, \wy) \,|\, y\in \ZZ^d \cap \left( \{0\} \times [0,\delta K[^{d-1} \right) \right\} \geq \frac{\eps K}{4M}\right] \bigg) \,.
\end{align}
According to Proposition \ref{propbn} and Lemma \ref{lemDM}, there exists $\delta_0 (M,\eps)$ such that for all $\delta\leq \delta_0$, for any fixed $R$, we can choose $K$ large enough so that the right hand side of (\ref{eqev8}) is as small as we want. Combining (\ref{eqev6}), (\ref{eqev7}) and (\ref{eqev8}), we see that we can choose our parameters $\delta \leq \min (\delta_0, \delta_1)$ and $R$ such that for $K$ large enough, the probability appearing in the right hand side of (\ref{eqev4}) compensates the combinatorial term, and Proposition \ref{propGD} is proved. 
\end{proof}


\subsection{Critical case}
\label{secpos2}

We can now study the case $F(\{0\}) = p_c(d)$. We start with specific laws $F$.
\begin{prop}
\label{propF(0)=pc}
Let $K \in ]0,+\infty[$ and $\eta >0$. Let $F = p_c(d) \delta_0 + \eta \delta_{K} + (1-p_c(d) - \eta) \delta_{\infty}$. Then the corresponding time constant is null : $\wmu (e_1) =0$.
\end{prop}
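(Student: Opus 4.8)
The plan is to argue by contradiction, using the lower large deviation estimate of Proposition~\ref{propGD}. Suppose $\wmu(e_1)>0$. For the law $F$ at hand one has $F([0,+\infty[)=F([0,K])=p_c(d)+\eta>p_c(d)$, so Proposition~\ref{propGD} applies with $M=K$ (for which $\C_M=\C_\infty$, cf.\ the remark following that proposition). Taking $\eps=\wmu(e_1)/2$, it provides constants $C_5,C_6>0$ with
$$\PP\big[\,0\in\C_K\,,\ T(0,H_n)<\tfrac{\wmu(e_1)}{2}\,n\,\big]\ \le\ C_5\,e^{-C_6 n}\qquad\text{for all }n.$$
I will contradict this by showing that the left-hand side is \emph{not} exponentially small along some subsequence of $n$'s, which will force $\wmu(e_1)=0$.

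To bound that probability from below I would only use paths of zero passage time. Let $\overset{0}{\longleftrightarrow}$ denote connection in the Bernoulli percolation $(\ind{\{t(e)=0\}},\,e\in\EE^d)$, which has parameter $F(\{0\})=p_c(d)$ and is therefore \emph{critical}. On the event $\{0\overset{0}{\longleftrightarrow}H_n\}$ there is a path of zero‑time edges from $0$ to $H_n$, hence $T(0,H_n)=0$; and such a path lies in $\{t(e)\le K\}$, so on this event $0\overset{K}{\longleftrightarrow}\partial\B_1(0,n)$ as well. Consequently, by Theorem~\ref{thmfinite} applied to $(\ind{\{t(e)\le K\}},\,e\in\EE^d)$ (whose infinite cluster is $\C_K$),
$$\PP\big[0\in\C_K,\ T(0,H_n)<\tfrac{\wmu(e_1)}{2}n\big]\ \ge\ \PP\big[0\overset{0}{\longleftrightarrow}H_n\big]-\PP\big[0\notin\C_K,\,0\overset{K}{\longleftrightarrow}\partial\B_1(0,n)\big]\ \ge\ \PP\big[0\overset{0}{\longleftrightarrow}H_n\big]-A_4 e^{-A_5 n}.$$
Finally, since reaching $\partial\B_\infty(0,n)$ forces the cluster to meet one of the $2d$ hyperplanes $\{z_i=\pm n\}$, the invariance of the model under reflections and permutations of the axes gives $\PP[0\overset{0}{\longleftrightarrow}H_n]\ge \tfrac{1}{2d}\,\PP_{p_c(d)}[0\leftrightarrow\partial\B_\infty(0,n)]$.

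The last ingredient is purely about critical Bernoulli percolation: $\PP_{p_c(d)}[0\leftrightarrow\partial\B_\infty(0,n)]$ does not decay exponentially. I would deduce this from the standard fact that the susceptibility is infinite at criticality, $\chi(p_c(d))=\sum_{x\in\ZZ^d}\PP_{p_c(d)}[0\leftrightarrow x]=\infty$ (see e.g.\ \cite{grimmett99}): if one had $\PP_{p_c(d)}[0\leftrightarrow\partial\B_\infty(0,n)]\le A e^{-cn}$ for all $n$, then using $\ind{\{0\leftrightarrow x\}}\le\ind{\{0\leftrightarrow\partial\B_\infty(0,\|x\|_\infty)\}}$ and summing over the $\le C_d n^{d-1}$ vertices with $\|x\|_\infty=n$ would give $\chi(p_c(d))<\infty$, a contradiction. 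Hence there exist $n_k\to\infty$ and $\eps_k\to0$ with $\PP_{p_c(d)}[0\leftrightarrow\partial\B_\infty(0,n_k)]\ge e^{-\eps_k n_k}$. Combining the three bounds above, for $k$ large enough,
$$C_5 e^{-C_6 n_k}\ \ge\ \tfrac{1}{2d}e^{-\eps_k n_k}-A_4 e^{-A_5 n_k}\ \ge\ \tfrac{1}{4d}e^{-\eps_k n_k},$$
which is impossible once $\eps_k<C_6$ and $n_k$ is large. This contradiction gives $\wmu(e_1)=0$.

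The hard part here is conceptual rather than computational: one has to recognize that the right tool is Proposition~\ref{propGD} read ``in reverse'', and that the borderline value $F(\{0\})=p_c(d)$ is precisely the regime in which the zero‑time edges still produce arbitrarily long (though not infinite) free connections — a feature encoded by the absence of exponential decay at $p_c(d)$. The only genuine bookkeeping is to absorb the indicator $\ind{\{0\in\C_M\}}$ appearing in Proposition~\ref{propGD}, for which the exponential control on the size of finite clusters (Theorem~\ref{thmfinite}) is exactly what is needed.
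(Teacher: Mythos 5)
Your proof is correct, but it consumes the key input in a different form than the paper does. The paper also argues by contradiction from Proposition~\ref{propGD}, but it converts the exponential bound on $\PP[0\in\C_M,\,T(0,H_n)=0]$ into finiteness of $\EE[|C_0(0)|]$ (splitting according to whether $0\in\C_M$, handling the complementary event with Theorem~\ref{thmfinite} exactly as you do, and summing the tail of $|C_0(0)|$ over $k$ using the bound at scale $(k^{1/d}-1)/2$); it then invokes Kesten's Corollary 5.1, i.e.\ ``$\EE[|C|]<\infty\Rightarrow p<p_c(d)$'', to reach the contradiction. You instead keep the contradiction at the level of the point-to-hyperplane event: you lower-bound $\PP[0\in\C_K,\,T(0,H_n)<\wmu(e_1)n/2]$ by the critical one-arm probability minus the exponentially small finite-cluster correction, and you derive subexponential decay of the one-arm probability along a subsequence from $\chi(p_c(d))=\infty$. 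The two arguments rest on the same nontrivial fact about critical percolation --- infinite susceptibility at $p_c(d)$, which is exactly the contrapositive of the corollary the paper cites --- so neither is more elementary; yours trades the paper's tail-summation of $\EE[|C_0(0)|]$ for the small extra step of extracting a subsequence $n_k$ with $\PP_{p_c(d)}[0\leftrightarrow\partial\B_\infty(0,n_k)]\ge e^{-\eps_k n_k}$, $\eps_k\to 0$, which you justify correctly by a $\liminf$ argument. All the intermediate reductions (applicability of Proposition~\ref{propGD} with $M=K$, the inclusion of $\{0\overset{0}{\longleftrightarrow}H_n\}$ into the event of the large deviation bound, the use of Theorem~\ref{thmfinite} to absorb $\{0\notin\C_K\}$, and the factor $\tfrac{1}{2d}$ from symmetry) are sound.
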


\begin{proof}
We follow Kesten's proof of the nullity of the time constant in the case of finite passage times (see Theorem 6.1 in \cite{Kesten:StFlour}). Let $C_0(0)$ (resp. $C_M(0)$) be the open cluster of $0$ in the percolation $(\ind{\{ t(e)=0 \}}, e\in \EE^d)$ (resp. $(\ind{\{ t(e)\leq M \}}, e\in \EE^d)$). Notice that, since $F([0,K]) = p_c(d) + \eta > p_c(d)$, we can choose $M=K$ in the definition of the times $\wT$. We recall that $|A|$ denotes the cardinality of a discrete set $A$. We have
\begin{align}
\label{eqC(0)}
\EE \left[ |C_0(0)| \right] & \,=\, \EE [|C_0(0)| \ind{\{ 0 \in \C_M \}} ] + \EE[|C_0(0)| \ind{\{ 0 \notin \C_M \}} ] \nonumber \\
& \,\leq \,  \EE [|C_0(0)| \ind{\{ 0 \in \C_M \}} ] + \EE[|C_M(0)| \ind{\{ 0 \notin \C_M \}} ] \,.
\end{align}
To control the last term in (\ref{eqC(0)}), we can use Theorem \ref{thmfinite} since the percolation $(\ind{\{ t(e)\leq M \}}, e\in \EE^d)$ is supercritical :
\begin{align}
\label{eqC(0)2}
\EE[|C_M(0)| \ind{\{ 0 \notin \C_M \}} ] & \,\leq\, \sum_{k\in \NN} (k+1) \, \PP[k\leq |C_M(0)| < k+1\,,\, 0 \notin \C_M ]  \nonumber \\
& \,\leq \, \sum_{k\in \NN} \PP [|C_M(0)| \geq k\,,\, 0 \notin \C_M] \nonumber \\
& \,\leq\, \sum_{k\in \NN} \PP[ 0\notin \C_M\,,\, 0 \overset{M}{\longleftrightarrow} \partial B_1 (0,k) ] \nonumber \\
& \,\leq \, \sum_{k\in \NN} A_4 e^{-A_5 k} \,<\, +\infty \,.
\end{align}
We recall that $H_n = \{ z \in \RR^d \,|\, z_1 = n\}$, and $T(0,H_n) = \inf \{ T(0,x) \,|\, x \in H_n \cap \ZZ^d \} $. Concerning the first term in (\ref{eqC(0)}), we have similarly
$$\EE [|C_0(0)| \ind{\{ 0 \in \C_M \}} ] \,\leq\, \sum_{k\in \NN} \PP [|C_0(0)| \geq k\,,\, 0 \in \C_M] \,.$$
Notice that if $C_0(0) \subset [-n, n]^d$ for $n\in \NN$, then $|C_0(0)| \leq (2n+1)^d$. This means that if $|C_0(0) |\geq k$, then $C_0(0) \not\subset [-(k^{1/d} -1 )/2, (k^{1/d}-1)/2]^d$. Using the symmetry of the model we obtain
\begin{align*}
\EE [|C_0(0)| \ind{\{ 0 \in \C_M \}} ]  &\,\leq\, \sum_{k\in \NN} \PP \left[ \{ 0 \in \C_M \} \cap \left\{ \begin{array}{c}\exists i\in \{1,\dots ,d\}\,,\, \exists x\in \ZZ^d \textrm{ s.t. } \\ T(0, x) =0 \textrm{ and } |x_i| \geq (k^{1/d}-1)/2 \end{array}\right\} \right]\\
& \,\leq\, \sum_{k\in \NN} 2d \,\, \PP[0\in \C_M \,,\, T(0,H_{(k^{1/d}-1)/2})=0] \,.
\end{align*}
Suppose that $\wmu(e_1) > 0$. Then by Proposition \ref{propGD} we conclude that
\begin{equation}
\label{eqC(0)3}
\EE [|C_0(0)| \ind{\{ 0 \in \C_M \}} ]   \,\leq \, 2d \sum_{k\in \NN}  C_5 \exp \left[ -\frac{C_6}{2}  (k^{1/d}-1) \right] \,<\, +\infty \,.
\end{equation}
Thus, if $\wmu(e_1)$ if finite, we obtain by (\ref{eqC(0)}), (\ref{eqC(0)2}) and (\ref{eqC(0)3}) that $\EE [|C_0(0)|] < \infty$. But this implies that $F(\{ 0\}) < p_c(d)$ (see for instance Corollary 5.1 in \cite{Kesten:perco}), which is a contradiction. Thus $\wmu(e_1) = 0$.
\end{proof}

\begin{proof}[Proof of Theorem \ref{thmpositivity}]
Let $F$ satisfying $F([0,+\infty[)>p_c(d)$. If $F(\{ 0 \}) <p_c(d)$, we conclude by Proposition \ref{propF(0)<pc}. If $F(\{ 0 \}) >p_c(d)$, we conclude by Proposition \ref{propF(0)>pc}. Suppose that $F(\{ 0 \}) = p_c(d)$. Since $F([0,+\infty[)>p_c(d)$, there exists $K \in ]0,+\infty[$ such that $F([0,K])>p_c(d)$. We define 
$$ t'(e) \,=\, \left\{ \begin{array}{ll} 0 & \textrm{ if }t(e)=0\\ K & \textrm{ if }t(e) \in ]0,K] \\ +\infty & \textrm{ if } t(e) >K \,. \end{array} \right. $$
By construction $t'(e) \geq t(e) $ for all $e\in \EE^d$. Choosing $M=K$ in the definition of the regularized times $\wT$ and $\wT'$ (corresponding to the times $t'(e)$), we see that the infinite cluster $\C_M$ of edges of passage time smaller than $M=K$ is exactly the same according to the times $t$ and $t'$, thus the points $\wx$ are the same, and we obtain that $\wT(x,y) \leq \wT'(x,y)$, for all $x,y\in \ZZ^d$. If $\wmu'(e_1)$ denotes the time constant for the passage times $t'(e)$, we conclude that $\wmu(e_1) \leq \wmu'(e_1)$. Applying Proposition \ref{propF(0)=pc}, we obtain that $\wmu'(e_1)=0$, thus $\wmu(e_1)=0$.
\end{proof}


\section*{Acknowledgements} 

The authors would like to thank Daniel Boivin for bringing to their attention the reference \cite{Mourrat12}.

\def\cprime{$'$}

\end{document}